\begin{document}

\newtheorem{theorem}{{\bf Theorem}}[section]
\newtheorem{fact}[theorem]{{\bf Fact}}
\newtheorem{lemma}[theorem]{{\bf Lemma}}
\newtheorem{sublemma}[theorem]{{\bf Sublemma}}
\newtheorem{proposition}[theorem]{{\bf Proposition}}
\newtheorem{corollary}[theorem]{{\bf Corollary}}
\newtheorem{definition}[theorem]{{\bf Definition}}
\newtheorem{notation}[theorem]{{\bf Notation}}
\newtheorem{convention}{{\bf Convention}}
\newtheorem{case}{{\bf Case}}
\newtheorem{casecase}{{\bf Case}}
\newtheorem{cacase}{{\bf Case}}
\newtheorem{subcacase}{{\bf Case 1.\hspace{-0.2em}}}
\newtheorem{subcase}{{\bf Case 1.\hspace{-0.2em}}}
\newtheorem{subcase2}{{\bf Case 2.\hspace{-0.2em}}}
\newtheorem{problem}{{\bf Problem}}
\newenvironment{proof}{{\bf Proof }}{\hfill$\square$}
\newcommand{\qed}{\hfill$\square$}
\newtheorem{remark}[theorem]{{\bf Remark}}
\newtheorem{claim}{{\bf Claim}}
\newcommand\Z{{\mathbb Z}}
\newcommand\N{{\mathbb N}}
\newcommand\Q{{\mathbb Q}}
\newcommand\R{{\mathbb R}}
\newcommand\diff{{\rm Diff}}
\newcommand\sym{{\rm Sym}}
\newcommand\fix{{\rm Fix}}
\newcommand\out{{\rm Out}}
\newcommand\aut{{\rm Aut}}
\newcommand\inn{{\rm Inn}}
\newcommand\id{{\rm id}}
\newcommand\K{{\mathcal K}}
\newcommand\T{{\mathcal T}}
\newcommand\LL{{\mathcal L}}
\newcommand\D{{\mathcal D}}
\newcommand\mcg{{\mathcal M}}
\newcommand\pmcg{{\mathcal PM}}
\newcommand\isom{{\rm Isom}}
\newcommand\interior{{\rm Int}}
\newcommand\cl{{\rm cl}}
\relpenalty=10000
\binoppenalty=10000
\uchyph=-1

\makeatletter
\def\tbcaption{\def\@captype{table}\caption}
\def\figcaption{\def\@captype{figure}\caption}
\makeatother

\title{Classification of 3-bridge spheres of 3-bridge arborescent links} 
\author{Yeonhee Jang
\footnote{This research is partially supported by Grant-in-Aid for JSPS Research Fellowships for Young Scientists.}\\
Department of Mathematics, Hiroshima University, \\
Hiroshima 739-8526, Japan\\
yeonheejang@hiroshima-u.ac.jp}

\date{\empty}
\maketitle

\begin{abstract}
In this paper, we give an isotopy classification of 3-bridge spheres of 3-bridge arborescent links, which are not Montesinos links. 
To this end, we prove a certain refinement of 
a theorem of J.S. Birman and H.M. Hilden \cite{Bir}
on the relation between bridge presentations of links 
and Heegaard splittings of 3-manifolds.
In the proof of this result, we also give 
an answer to a question by K. Morimoto \cite{Mor} 
on the classification of genus-2 Heegaard splittings of certain graph manifolds.
\end{abstract}

\section{Introduction}\label{intro}

An {\it $n$-bridge sphere} of a link $L$ in $S^3$
is a 2-sphere which meets $L$ in $2n$ points
and cuts $(S^3, L)$ into $n$-string trivial tangles
$(B_1, t_1)$ and $(B_2, t_2)$. 
Here, an {\it $n$-string trivial tangle} is 
a pair $(B^3, t)$ of the $3$-ball $B^3$ and 
$n$ arcs properly embedded in $B^3$ parallel to the boundary of $B^3$. 
We call a link $L$ an {\it $n$-bridge link} 
if $L$ admits an $n$-bridge sphere and does not admit an ($n-1$)-bridge sphere.
Two $n$-bridge spheres $S_1$ and $S_2$ of $L$ are said to be {\it pairwise isotopic} 
({\it isotopic}, in brief)
if there exists a homeomorphism $f:(S^3, L)\rightarrow (S^3, L)$
such that $f(S_1)=S_2$ and 
$f$ is {\it pairwise isotopic} to the identity,
i.e., there is a continuous family of homeomorphisms 
$f_t:(S^3, L)\rightarrow (S^3, L)$ $(0\leq t\leq 1)$
such that $f_0=f$ and $f_1=\id$.
Bridge numbers and bridge spheres of links have been studied in various references
(for example, see \cite{Boi3,Jan1,Jan2,Ota1,Ota2,Sch2,Sch} and references therein).

Recall that an {\it arborescent link} is a link obtained by closing an arborescent tangle
with a trivial tangle (see \cite{Con}),
where an {\it arborescent tangle} is a 2-string tangle obtained from rational tangles 
by repeatedly applying the operations in Figure \ref{fig-tanglesum}. 
%
\begin{figure}
\begin{center}
\includegraphics*[width=6.3cm]{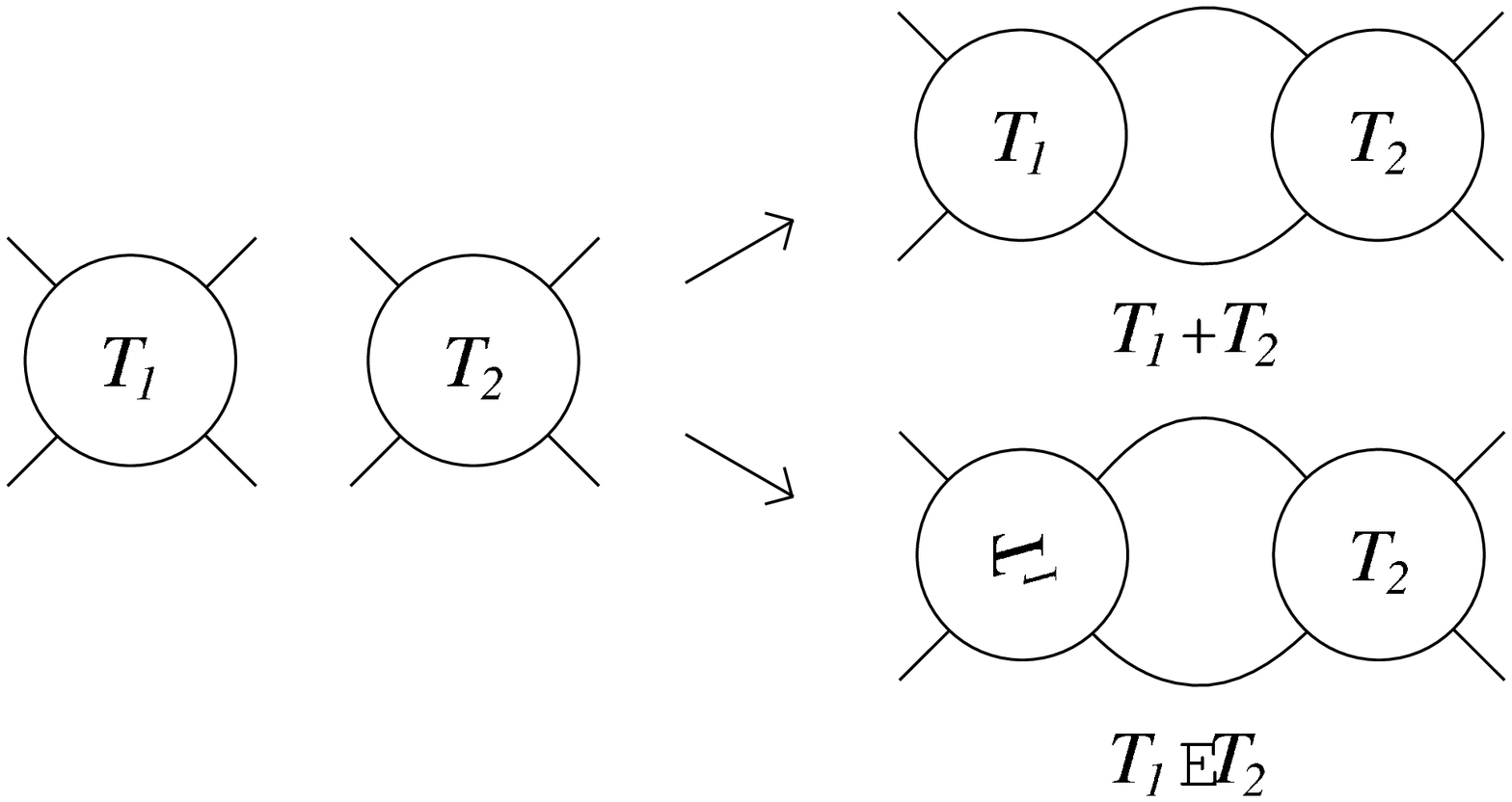}
\end{center}
\caption{}
\label{fig-tanglesum}
\end{figure}
%
These links form an important family of links
which contains 2-bridge links and Montesinos links,
and the double branched covering of the 3-sphere $S^3$ branched over an arborescent link
is a graph manifold.
Bonahon and Siebenmann \cite{Bon} 
gave a complete classification of arborescent links (cf. \cite{Fut}).

In \cite[Theorem 1]{Jan2}, we gave the following complete list of 3-bridge arborescent links,
where two links are {\it equivalent} 
if there exists an orientation-preserving homeomorphism of $S^3$
which carries one of the two links to the other.
(The classification of the links in the list up to equivalence is also given in \cite[Theorem 2]{Jan2}.)
\\

\begin{theorem}\label{thm-main-1}
Let $L$ be a 3-bridge arborescent link.
Then one of the followings holds.

%
\begin{figure}
\begin{center}
\includegraphics*[width=12cm]{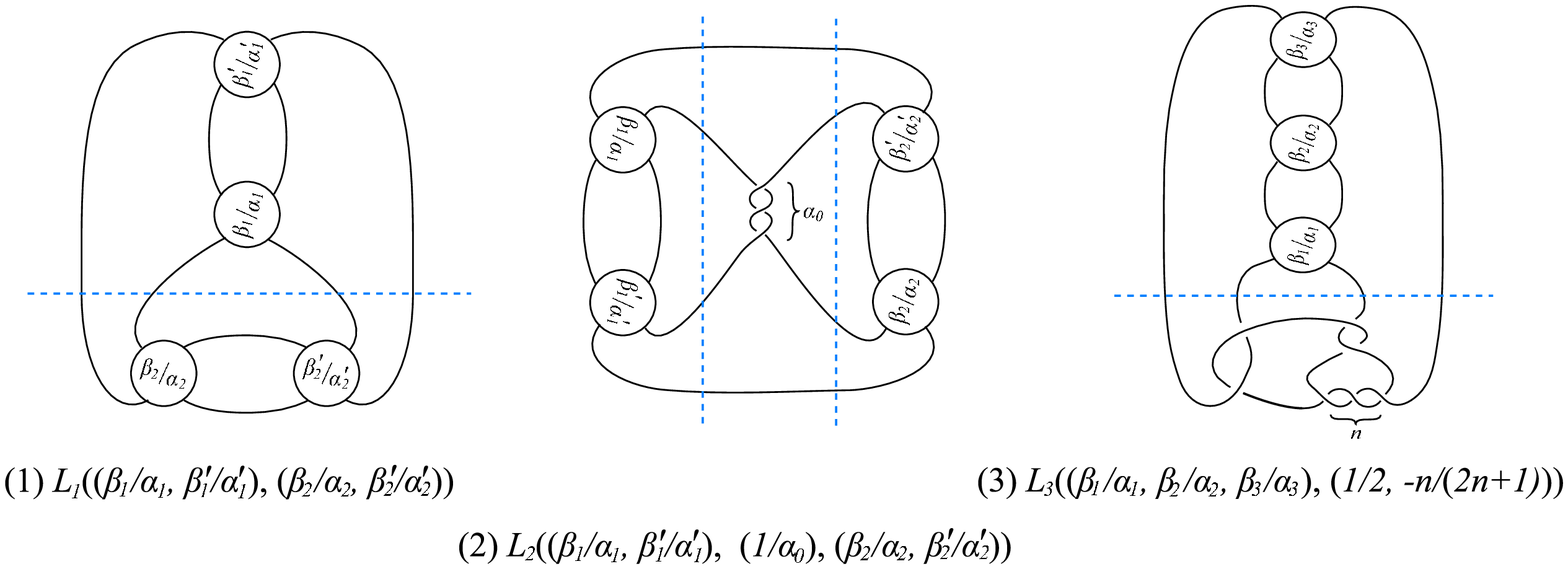}
\end{center}
\caption{}
\label{fig-3blinks}
\end{figure}
%
(1) $L$ is equivalent to the link 
$L_1((\beta_1/\alpha_1, \beta_1'/\alpha_1'),(\beta_2/\alpha_2, \beta_2'/\alpha_2'))$ 
in Figure \ref{fig-3blinks} (1).

(2) $L$ is equivalent to the link 
$L_2((\beta_1/\alpha_1, \beta_1'/\alpha_1'),(1/\alpha_0),(\beta_2/\alpha_2, \beta_2'/\alpha_2'))$ 
in Figure \ref{fig-3blinks} (2).

(3) $L$ is equivalent to the link 
$L_3((\beta_1/\alpha_1, \beta_2/\alpha_2, \beta_3/\alpha_3),(1/2, -n/(2n+1)))$ 
in Figure \ref{fig-3blinks} (3).

(4) $L$ is a Montesinos link 
$L(-b; \beta_1/\alpha_1, \beta_2/\alpha_2, \beta_3/\alpha_3)$.

Here, $\alpha_i$, $\alpha_i'$, $\beta_i$, $\beta_i'$ are integers 
such that $\alpha_i, \alpha_i'>1$ and 
$g.c.d.(\alpha_i, \beta_i)=g.c.d.(\alpha_i', \beta_i')=1$ ($i=1,2,3$), and
$\alpha_0$ and $n$ are integers such that
$|\alpha_0|>1$ and $|2n+1|>1$.
In Figure \ref{fig-3blinks},
the circle encircling a rational number $\beta/\alpha$
represents the rational tangle of slope $\beta/\alpha$.

\end{theorem}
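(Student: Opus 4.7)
The plan is to leverage the Bonahon--Siebenmann classification \cite{Bon}, which represents an arborescent link by a weighted tree whose vertices carry rational tangle labels and whose edges encode the tangle-sum operations of Figure \ref{fig-tanglesum}. After normalizing this tree into Bonahon--Siebenmann canonical form, the 3-bridge condition will severely restrict its shape, and I would aim to show that the canonical tree of $L$ must be one of exactly four types, corresponding to the four families in the statement.

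I would begin with the easy direction: for each of the four families in Figure \ref{fig-3blinks}, produce an explicit 3-bridge sphere by inspection. For family (4) this is the standard 3-bridge position of a Montesinos link with three rational tangles. For families (1)--(3), the diagrams are drawn symmetrically so that a horizontal sphere cuts the link into two trivial 3-string tangles; verifying triviality of each side reduces to the standard fact that any rational tangle together with a single bridge strand can be made trivial by an isotopy. For the harder direction (completeness of the list), I would induct on the number of vertices of the canonical tree. The base cases are a single vertex (a 2-bridge link, which is subsumed as a degenerate Montesinos link) and a tree with one internal vertex of valence three, which gives the Montesinos family (4). For the inductive step, one analyzes how attaching another rational-tangle vertex affects the bridge number, observing that the bridge number strictly increases unless the attached tangle has slope of a very restricted form, namely $1/\alpha_0$ along a bridge edge (producing family (2)) or $1/2$ paired with $-n/(2n+1)$ (producing family (3)); any other attachment pushes the bridge number above $3$.

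The main obstacle is obtaining a sharp lower bound for the bridge number in the remaining cases, since upper bounds alone cannot rule out the existence of a clever 3-bridge presentation for trees outside the listed shapes. To handle this, I would pass to the double branched cover $M \to S^3$ branched along $L$. Because $L$ is arborescent, $M$ is a graph manifold whose JSJ decomposition mirrors the Bonahon--Siebenmann tree of $L$, with Seifert pieces corresponding to the internal vertices. A 3-bridge sphere of $L$ lifts to a genus-$2$ Heegaard surface of $M$, so the question reduces to deciding which graph manifolds of the relevant type admit a genus-$2$ Heegaard splitting. Using the structural constraints on genus-$2$ splittings of graph manifolds (the same circle of ideas around Morimoto's question cited in the abstract), one bounds the number and types of Seifert pieces in the JSJ decomposition of $M$, which in turn forces the canonical tree of $L$ into precisely one of the four shapes.

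Finally, the parameter constraints $\alpha_i, \alpha_i' > 1$, $|\alpha_0| > 1$, and $|2n+1| > 1$ would be read off from the requirement that no rational tangle in the decomposition be integral or trivial, since such a tangle could be absorbed into a neighboring one and would collapse the tree to a strictly smaller canonical form, already accounted for in a different family of the list.
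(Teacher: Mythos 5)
The paper you are reading does not actually prove this theorem: it is quoted verbatim from the author's earlier work (\cite[Theorem 1]{Jan2}), so there is no in-paper proof to compare against. That said, the surrounding text makes the actual strategy of \cite{Jan2} clear --- pass to the double branched cover $M_2(L)$, which is a graph manifold, observe that a 3-bridge sphere lifts to a genus-2 Heegaard surface, and then invoke the Kobayashi--Morimoto structure theory of genus-2 Heegaard splittings of such manifolds to constrain the torus decomposition and hence the arborescent structure of $L$. Your third paragraph identifies exactly this mechanism, so your core idea coincides with the real proof.

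Two points in your sketch would not survive being written out. First, the induction on the Bonahon--Siebenmann tree with the claim that ``the bridge number strictly increases unless the attached tangle has slope of a very restricted form'' is not a usable argument: bridge-number lower bounds do not localize to a single tangle-sum operation, and you yourself concede this in the next paragraph. Once you commit to the double-cover route, the induction adds nothing --- all the content is in the statement that a graph manifold of Heegaard genus 2 has a JSJ decomposition of one of very few shapes, and that statement is precisely the Kobayashi--Morimoto input you cite only by allusion. Second, there is a genuine missing step at the end: classifying the manifolds $M_2(L)$ does not classify the links $L$, since inequivalent links can share a double branched cover. One must also control the covering involution (equivalently, work equivariantly with the hyper-elliptic involution of the genus-2 splitting, as in the Birman--Hilden setup that this paper refines) before ``the canonical tree of $L$ is forced into one of the four shapes.'' Your easy direction (exhibiting the 3-bridge spheres of Figure \ref{fig-3blinks} by inspection) is fine and is not where the difficulty lies.
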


For each $i=1,2,3$, we denote by $\LL_i$ the family of links as in ($i$) in the above theorem.
The main purpose of this paper is to give a complete classification 
of the 3-bridge spheres of the links in $\LL_1\cup \LL_2\cup \LL_3$.
We first present a complete list of the 3-bridge spheres.

\begin{theorem}\label{thm-main-2}
Any 3-bridge sphere of a link $L$ in $\LL_1\cup \LL_2\cup \LL_3$ is isotopic to 
one of the 3-bridge spheres in Figure \ref{fig-3bspheres}.
To be precise, the following hold.
\begin{itemize}
\item[{\rm (i)}] 
If $L$ belongs to $\LL_1$, 
then any 3-bridge sphere of $L$ is isotopic to one of the 3-bridge spheres $S_1$, $S_2$, $S_3$ and $S_4$ 
in (1), (2), (3) and (3') in Figure \ref{fig-3bspheres}.
\item[{\rm (ii)}] 
If $L$ belongs to $\LL_2$ or $\LL_3$, 
then any 3-bridge sphere of $L$ is isotopic to the 3-bridge sphere 
in (4) and (5) in Figure \ref{fig-3bspheres}, respectively.
\end{itemize}
%
\begin{figure}
\begin{center}
\includegraphics*[width=11cm]{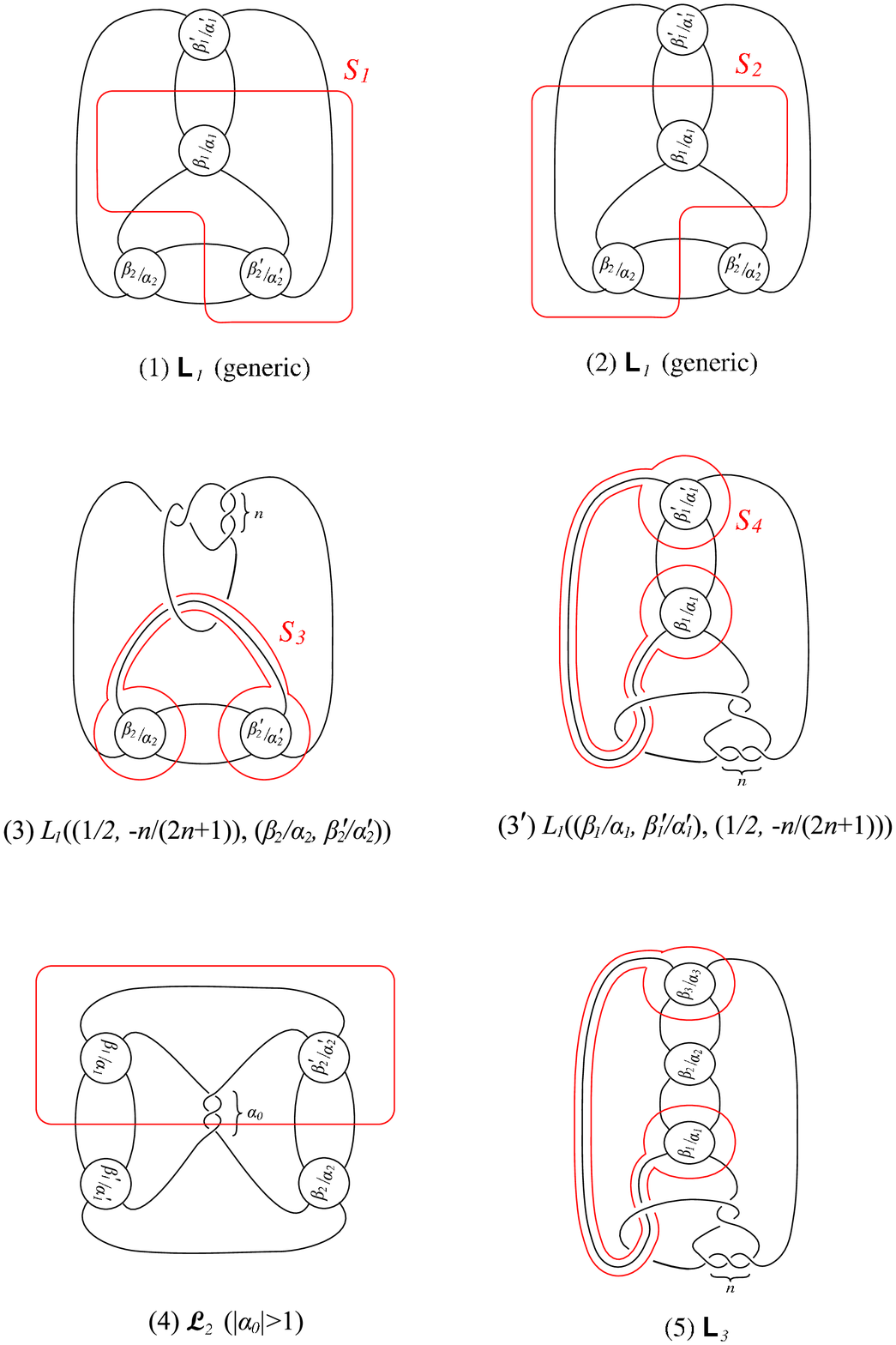}
\end{center}
\caption{}
\label{fig-3bspheres}
\end{figure}
%
\end{theorem}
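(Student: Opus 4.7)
The plan is to lift the problem to the double branched cover $M_L$ of $(S^3,L)$, where any 3-bridge sphere of $L$ becomes a genus-2 Heegaard surface invariant under the covering involution $\tau$. Since $L$ is arborescent, $M_L$ is a graph manifold whose JSJ decomposition mirrors the Conway decomposition of $L$: each rational tangle lifts to a Seifert fibered piece, each decomposing Conway sphere lifts to a JSJ torus, and $\tau$ acts on the whole picture in a controlled way. Using the refinement of the Birman--Hilden theorem mentioned in the abstract, pairwise isotopy classes of 3-bridge spheres of $L$ correspond bijectively to $\tau$-equivariant isotopy classes of genus-2 Heegaard surfaces of $M_L$. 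This reduces Theorem \ref{thm-main-2} to classifying such Heegaard surfaces of the specific graph manifolds coming from $\LL_1\cup\LL_2\cup\LL_3$.

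Next, I would read off the JSJ decomposition of $M_L$ from Figure \ref{fig-3blinks} in each family and study a $\tau$-equivariant genus-2 Heegaard surface $\Sigma$ relative to the JSJ tori. After an equivariant isotopy putting $\Sigma$ in general position, the Euler-characteristic constraint $\chi(\Sigma)=-2$, together with the incompressibility of the JSJ tori and standard untelescoping arguments, forces each piece of $\Sigma$ inside a Seifert fibered block to be either vertical or horizontal. I would then enumerate the possible $\tau$-equivariant gluings of these pieces across the JSJ tori that yield a single genus-2 surface. This step draws on, and extends, Morimoto's framework for genus-2 Heegaard splittings of graph manifolds; the required extension---the answer to Morimoto's question cited in the abstract---is precisely what makes the enumeration uniform across the three families and rules out exotic splittings.

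The main obstacle I expect lies in the $\LL_1$ case, where four distinct 3-bridge spheres $S_1,\dots,S_4$ appear. Distinguishing them reduces to comparing combinatorial invariants extracted from the intersection pattern of the lifted Heegaard surfaces with the JSJ tori, but ruling out additional spheres requires a careful case analysis of all vertical and horizontal configurations in the graph manifold, and it is here that the refinement of Birman--Hilden does real work in preventing isotopies that are not equivariantly realizable. The cases $\LL_2$ and $\LL_3$ are handled by the same method but are simpler: in $\LL_2$ the central $1/\alpha_0$ tangle contributes a JSJ piece rigid enough to pin down the Heegaard surface up to isotopy, and in $\LL_3$ the subtangle of slopes $1/2$ and $-n/(2n+1)$ plays the same role. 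Finally, projecting the classified $\tau$-equivariant Heegaard surfaces back down to $(S^3,L)$ recovers exactly the bridge spheres listed in Figure \ref{fig-3bspheres}, which completes the proof.
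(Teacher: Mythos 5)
Your overall architecture is the paper's: pass to the double branched cover, invoke the refinement of Birman--Hilden (Theorem \ref{prop-hs-3b-0}) to reduce to genus-2 Heegaard surfaces, and then classify those surfaces relative to the JSJ tori using the Kobayashi--Morimoto machinery. However, there are concrete gaps. First, the reduction is stated slightly wrongly in a way that matters: the paper's Theorem \ref{prop-hs-3b-0} gives a bijection with \emph{ordinary} isotopy classes of genus-2 Heegaard surfaces $F$ subject to the condition $\tau_F=\tau_L$, not with $\tau$-equivariant isotopy classes. This is precisely what allows the non-equivariant enumeration of Morimoto and \cite[Theorem 5]{Jan2} to be quoted directly; your version would force you to redo that enumeration equivariantly. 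Second, and more seriously, the enumeration of Heegaard surfaces alone does not finish the proof: one must then determine which of the enumerated surfaces actually have hyper-elliptic involution strongly equivalent to the covering involution $\tau_L$. This filtering step is where much of the work lies --- in the $\LL_2$ case (b) there is an infinite family of Heegaard surfaces obtained by Dehn twisting along the JSJ torus, of which only the untwisted one satisfies $\tau_F=\tau_L$ (via \cite[Lemma 3 and Lemma 5]{Jan2}), and in the $\LL_3$ case one must carry out a computation in $\mcg(M)\cong (P_3/\langle (xy)^3\rangle)\rtimes\langle\tau\rangle$ (Claim \ref{claim-compare}) to see that exactly one surface survives. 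Your sketch does not address this step at all, and without it the method produces too many candidates.

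Two further omissions: the family $\LL_2$ contains the exceptional non-simple link $L_2((-1/2,1/2),(1/n),(-1/2,1/2))$, whose double branched cover does not admit the nontrivial decomposition by separating tori that your uniform JSJ argument presupposes; the paper must treat it separately by a direct cut-and-paste argument in $(S^3,L)$ (Proposition \ref{lem-unique-sp}). And in the M(1-b) subcase of $\LL_2$, standardizing the piece of the Heegaard surface lying in the knot-exterior block requires a genuine sweep-out argument (Lemma \ref{lem-p0} and its height-function proof in Section \ref{sec-pf-p0}), not just the vertical/horizontal dichotomy. So while your plan points in the right direction, it is a correct skeleton missing the parts of the proof that do the actual work.
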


\begin{remark}\label{rmk-i}
{\rm 
In the assertion (i),
the word \lq\lq generic\rq\rq\ in (1) and (2) in Figure \ref{fig-3bspheres}
means that every link $L\in\LL_1$ admits the 3-bridge spheres 
$S_1$ and $S_2$, respectively.
On the other hand, the 3-bridge spheres $S_3$ and $S_4$
in (3) and (3') are possessed only by the links
$L_1((1/2,-n/(2n+1)),(\beta_2/\alpha_2,\beta_2'/\alpha_2'))$ and
$L_1((\beta_1/\alpha_1,\beta_1'/\alpha_1'),(1/2,-n/(2n+1)))$,
respectively.
}
\end{remark}

Next we give 
a necessary and sufficient condition for any two 3-bridge spheres 
in Theorem \ref{thm-main-2}
to be isotopic.
This enables us to classify the 3-bridge spheres of
3-bridge arborescent links which are not Montesinos links.
In order to state the result,
we recall a notation introduced in \cite[Notation 1]{Jan2}.

\begin{notation}\label{notation}
{\rm
For rational numbers $s_1,\dots,s_r$ and $s_1',\dots,s_r'$,
$$(s_1,\dots,s_r)\sim(s_1',\dots,s_r')$$ means that
$(s_1,\dots,s_r)=(s_1',\dots,s_r')$ or $(s_r',\dots,s_1')$ in $(\Q/\Z)^r$ and 
$\displaystyle\sum_{i=1}^rs_i=\sum_{i=1}^rs_i'$.
}
\end{notation}

\begin{theorem}\label{thm-distinction}
Two 3-bridge spheres $S_i$ and $S_j$ $(i,j\in\{1,2,3,4\},i\neq j)$ 
for a link
$L_1((\beta_1/\alpha_1,$ $\beta_1'/\alpha_1'), (\beta_2/\alpha_2, \beta_2'/\alpha_2'))$
are isotopic if and only if
$\{i,j\}=\{1,2\}$ and 
$
(\beta_k/\alpha_k, \beta_k'/\alpha_k')\sim$ $
(\varepsilon_k/\alpha_k, \varepsilon_k'/\alpha_k')
$
for some $k=1,2$,
where $\varepsilon_k, \varepsilon_k'\in\{\pm 1\}$.
\end{theorem}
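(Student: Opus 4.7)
The plan is to transfer the question to the double branched cover $\tilde M$ of $(S^3,L)$, which for any $L\in\LL_1$ is a graph manifold whose JSJ decomposition mirrors the arborescent tangle decomposition of $L$. A 3-bridge sphere $S$ lifts to a genus-2 Heegaard surface $\tilde S\subset\tilde M$ that is invariant under the covering involution $\tau$. By the refinement of the Birman--Hilden theorem proved earlier in the paper, two 3-bridge spheres $S_i,S_j$ are pairwise isotopic in $(S^3,L)$ if and only if their lifts $\tilde S_i,\tilde S_j$ are $\tau$-equivariantly isotopic in $\tilde M$. This reduces Theorem \ref{thm-distinction} to an equivariant classification problem for genus-2 Heegaard surfaces of the particular graph manifolds in question, which is exactly the content of the answer to K.\ Morimoto's question (the second main technical input of the paper).

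To separate $S_3$ and $S_4$ from $S_1$ and $S_2$: the lifts $\tilde S_3,\tilde S_4$ exist only in the sub-family where one of the tangle pairs equals $(1/2,-n/(2n+1))$, and they sit relative to the JSJ tori and Seifert fibers of $\tilde M$ in a way that is combinatorially distinct from how $\tilde S_1,\tilde S_2$ sit. Concretely, $\tilde S_3$ and $\tilde S_4$ cross the JSJ torus associated with the $(1/2,-n/(2n+1))$-tangle ``horizontally'' while $\tilde S_1,\tilde S_2$ are isotopic to vertical surfaces; the equivariant classification then forbids any isotopy between the two classes, and a parallel argument separates $\tilde S_3$ from $\tilde S_4$. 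This handles all pairs $\{i,j\}\ne\{1,2\}$.

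To determine exactly when $S_1\simeq S_2$: both $\tilde S_1$ and $\tilde S_2$ are genus-2 Heegaard surfaces vertical along one of the two ``central'' JSJ tori of $\tilde M$, and the equivariant classification shows that $\tilde S_1$ and $\tilde S_2$ are $\tau$-equivariantly isotopic precisely when $\tilde M$ admits a $\tau$-commuting self-homeomorphism exchanging these two tori. Descending via Birman--Hilden, such a homeomorphism is exactly a pairwise self-map of $(S^3,L)$ that swaps the two ``halves'' of the arborescent presentation. Using the classification of rational-tangle pairs up to the symmetry group of the presentation (together with Notation \ref{notation}), such a self-map exists if and only if for some $k\in\{1,2\}$ the pair $(\beta_k/\alpha_k,\beta_k'/\alpha_k')$ is $\sim$-equivalent to a unit pair $(\varepsilon_k/\alpha_k,\varepsilon_k'/\alpha_k')$; the ``if'' direction can then be made explicit by writing down the isotopy produced by the extra symmetry of a unit-slope tangle and checking that it lifts $\tau$-equivariantly.

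The hardest point is the equivariance requirement: a non-equivariant isotopy of the lifts in $\tilde M$ need not descend to $(S^3,L)$, so one cannot directly apply the non-equivariant Heegaard classification. Both directions of the theorem rely on upgrading the Heegaard data to $\tau$-equivariant data—producing $\tau$-equivariant isotopies in the ``if'' direction, and ruling out accidental non-equivariant equivalences in the ``only if'' direction. This is precisely what the refined Birman--Hilden theorem of the paper is built to deliver, and the bulk of the remaining work is a case analysis of the symmetries of the JSJ pieces of $\tilde M$ to pin down the exact slope condition stated in the theorem.
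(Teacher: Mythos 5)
Your reduction to genus-2 Heegaard surfaces of the double branched cover via Theorem \ref{prop-hs-3b-0} is exactly the paper's first step, and your treatment of the ``if'' direction (an explicit isotopy coming from the extra symmetry of a unit-slope tangle, cf.\ Figure \ref{fig-isotopy}) also matches the paper. Note, though, that once Theorem \ref{prop-hs-3b-0}(1) is in place the remaining problem is a \emph{non}-equivariant one: $\Phi_L$ is a bijection for these links, so isotopy of bridge spheres is equivalent to ordinary isotopy of the corresponding Heegaard surfaces, and the equivariance difficulty you single out as ``the hardest point'' has already been absorbed into Section \ref{3b-hs} and plays no further role in this proof.

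The genuine gap is in the ``only if'' direction. You invoke ``the equivariant classification'' of the Heegaard surfaces relative to the JSJ tori as if it were an available black box that forbids isotopies between $\{F_1,F_2\}$ and $\{F_3,F_4\}$ and detects exactly when $F_1\simeq F_2$. But that classification is precisely what must be proved here --- it is the content of Morimoto's open question --- and knowing how each surface meets the (single) JSJ torus $T=\partial M_1=\partial M_2$ does not by itself exclude an isotopy carrying one combinatorial position to another. The paper's actual mechanism is the commutator invariant: by Proposition \ref{nielsen}, isotopic genus-2 splittings yield conjugate commutators $[x_1,x_2]$ and $[y_1,y_2]^{\pm1}$ of the induced generating pairs of $\pi_1(M)$. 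Lemma \ref{distinction-hs-1} then shows $[u_2,u_1]$ is not conjugate to $[u_2,v_1]^{\pm1}$ or $[v_2,u_1]^{\pm1}$ unless the slope condition holds, by solving explicit word equations in the amalgamated free product $\pi_1(M)=\pi_1(M_1)\ast_{\pi_1(T)}\pi_1(M_2)$ via Lemma \ref{lem-word-jan}; the condition $\beta_k\equiv\pm1$, $\beta_k'\equiv\mp1 \pmod{\alpha_k,\alpha_k'}$ falls out of the solvability criterion for those equations, not out of a symmetry classification of the tangle presentation. Lemma \ref{distinction-hs-2} separates $F_3,F_4$ from $F_1,F_2$ (and from each other) by comparing minimal lengths of conjugates ($4$ versus $8$) with respect to the graph-of-groups structure. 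Your proposal contains no substitute for this algebra, and your description of the $F_1$--$F_2$ question as the existence of a $\tau$-commuting homeomorphism ``exchanging these two tori'' misdescribes the geometry: for $L\in\LL_1$ there is only one separating JSJ torus, and $F_1$, $F_2$ differ only in how the pieces $U_i$, $W_j$ on its two sides are matched into handlebodies.
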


From Theorems \ref{thm-main-2} and \ref{thm-distinction},
we obtain Table \ref{table-3bsphere},
which gives the number $\mu$ of isotopy classes of 3-bridge spheres of $L\in\LL_1$.
In Table \ref{table-3bsphere}, ($i$-$j$) $(i\in\{a,b\}, j\in\{1,2,3,4\})$ means that
$L$ satisfies the conditions $(i)$ and $(j)$ as follows.
\begin{itemize}
\item[(a)] $(\beta_k/\alpha_k, \beta_k'/\alpha_k')\sim
(\varepsilon_k/\alpha_k, \varepsilon_k'/\alpha_k')$
for some $k=1,2$,
where $\varepsilon_k, \varepsilon_k'\in\{\pm 1\}$,
\item[(b)] $(\beta_k/\alpha_k, \beta_k'/\alpha_k')\not\sim
(\varepsilon_k/\alpha_k, \varepsilon_k'/\alpha_k')$
for both $k=1,2$,
where $\varepsilon_k, \varepsilon_k'\in\{\pm 1\}$,
\item[(1)] $(\beta_1/\alpha_1, \beta_1'/\alpha_1')\sim(1/2, -n/(2n+1))$ for some $n$ and
$(\beta_2/\alpha_2, \beta_2'/\alpha_2')\not\sim(1/2, -m/(2m+$ $1))$ for any $m$,
\item[(2)] $(\beta_1/\alpha_1, \beta_1'/\alpha_1')\not\sim(1/2, -n/(2n+1))$ for any $n$ and
$(\beta_2/\alpha_2, \beta_2'/\alpha_2')\sim(1/2, -m/(2m+1))$ for some $m$,
\item[(3)] $(\beta_k/\alpha_k, \beta_k'/\alpha_k')\sim(1/2, -n/(2n+1))$ for some $n$ for each $k=1,2$,
\item[(4)] $(\beta_k/\alpha_k, \beta_k'/\alpha_k')\not\sim(1/2, -n/(2n+1))$ for any $n$ for each $k=1,2$.
\end{itemize}

\begin{table}
\begin{center}
\begin{tabular}{|l||c|c|c||c|}
\hline
{} & {$S_1, S_2$} & {$S_3$} & {$S_4$} & {$\mu$}
\\ \hline\hline
{(a-1)} & \multirow{4}{3pt}{1} & {1} & {0} & {$2$}
\\ 
{(a-2)} &  & {0} & {1} & {$2$}
\\ 
{(a-3)} &  & {1} & {1} & {$3$}
\\ 
{(a-4)} &  & {0} & {0} & {$1$}
\\ \hline
{(b-1)} & \multirow{4}{3pt}{2} & {1} & {0} & {$3$}
\\ 
{(b-2)} &  & {0} & {1} & {$3$}
\\ 
{(b-3)} &  & {1} & {1} & {$4$}
\\ 
{(b-4)} &  & {0} & {0} & {$2$}
\\ \hline
\end{tabular}
\caption{3-bridge spheres for $L\in \LL_1$}
\label{table-3bsphere}
\end{center}
\end{table}

In particular, we obtain the following corollary,
which gives an affirmative answer to a question raised by
Morimoto (see \cite[p.324]{Mor}).

\begin{corollary}\label{cor-4-3bspheres}
The link $L_1((1/2, -n/2n+1),(1/2, -m/2m+1))$ 
with $|2n+1|,|2m+1|\not\in \{1,3\}$ 
admits exactly four 3-bridge spheres up to isotopy 
(see Figure \ref{fig-4}).
%
\begin{figure}
\begin{center}
\includegraphics*[width=12cm]{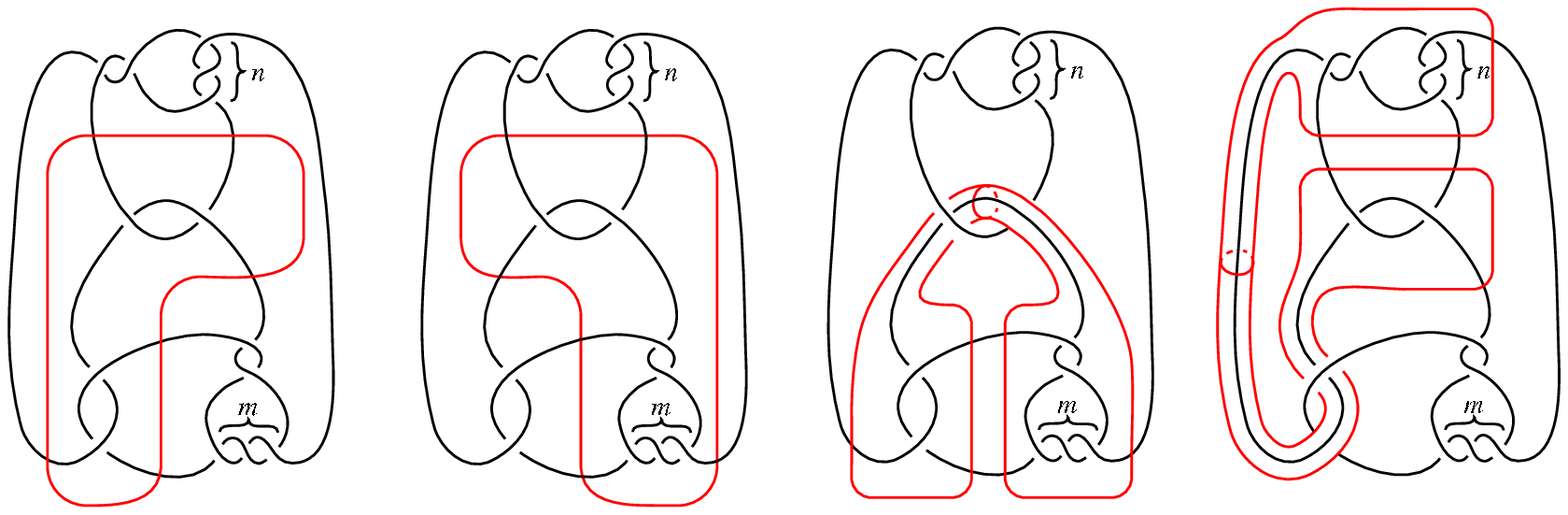}
\end{center}
\caption{}
\label{fig-4}
\end{figure}
%
\end{corollary}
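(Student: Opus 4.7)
The plan is to combine the existence part of Theorem~\ref{thm-main-2}(i) together with Remark~\ref{rmk-i} with the distinction criterion of Theorem~\ref{thm-distinction}.

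First, I would verify that the link $L=L_1((1/2,-n/(2n+1)),(1/2,-m/(2m+1)))$ admits each of the four spheres $S_1,S_2,S_3,S_4$. The spheres $S_1$ and $S_2$ are generic and hence possessed by every link in $\LL_1$; by Remark~\ref{rmk-i}, $S_3$ (resp.\ $S_4$) is possessed precisely when the first (resp.\ second) parameter pair has the form $(1/2,-k/(2k+1))$, and both of these conditions are built into the hypothesis. Combined with Theorem~\ref{thm-main-2}(i), which says any 3-bridge sphere is isotopic to one of $S_1,\ldots,S_4$, the task reduces to proving that these four spheres are pairwise non-isotopic.

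By Theorem~\ref{thm-distinction}, any pair $\{i,j\}\neq\{1,2\}$ consists of non-isotopic spheres automatically, so the remaining work is to establish that $S_1$ and $S_2$ are not isotopic. Theorem~\ref{thm-distinction} reduces this to checking that for each $k=1,2$,
\[
(1/2,\,-n_k/(2n_k+1))\ \not\sim\ (\varepsilon/2,\ \varepsilon'/(2n_k+1))
\]
for every choice of $\varepsilon,\varepsilon'\in\{\pm 1\}$, where I set $n_1=n$ and $n_2=m$.

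The main (and essentially only) obstacle is this last number-theoretic verification, which relies crucially on the hypothesis $|2n+1|,|2m+1|\notin\{1,3\}$. Unpacking Notation~\ref{notation}, the direct matching in $(\Q/\Z)^2$ forces $\varepsilon'\equiv -n_k\pmod{2n_k+1}$, i.e., $(2n_k+1)\mid n_k\pm 1$, which in the range $|2n_k+1|\geq 5$ admits no integer solution with $|\varepsilon'|=1$ (a short case analysis on the sign of $2n_k+1$). The swapped matching forces $1/2\equiv \varepsilon'/(2n_k+1)\pmod{\Z}$, which would require $2n_k+1=\pm 2$, impossible since $2n_k+1$ is odd. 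Since the $(\Q/\Z)^2$ equality already fails in both orderings, the auxiliary sum condition in Notation~\ref{notation} need not be examined separately. The excluded value $|2n_k+1|=3$ is exactly the borderline $n_k\in\{1,-2\}$ for which the divisibility above becomes consistent with $\varepsilon'=\mp 1$, which explains why that value must be ruled out. With both orderings rejected, Theorem~\ref{thm-distinction} gives that $S_1$ and $S_2$ are not isotopic, completing the count $\mu=4$.
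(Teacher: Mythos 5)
Your proposal is correct and follows the same route as the paper, which obtains this corollary as entry (b-3) of Table \ref{table-3bsphere}, itself a direct consequence of Theorems \ref{thm-main-2} and \ref{thm-distinction} together with Remark \ref{rmk-i}. Your explicit verification that the hypothesis $|2n+1|,|2m+1|\notin\{1,3\}$ forces condition (b) (via the divisibility $(2n_k+1)\mid 1$ or $(2n_k+1)\mid 3$, and the parity obstruction to the swapped matching) correctly supplies the arithmetic the paper leaves implicit.
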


Unfortunately, our methods do not work for Montesinos links.
However, we obtain the following partial result.

\begin{theorem}\label{thm-mont}
(1) A 3-bridge nonelliptic Montesinos link admits at most six 3-bridge spheres,
$P_1,\dots, P_6$, up to isotopy 
%
\begin{figure}
\begin{center}
\includegraphics*[width=12cm]{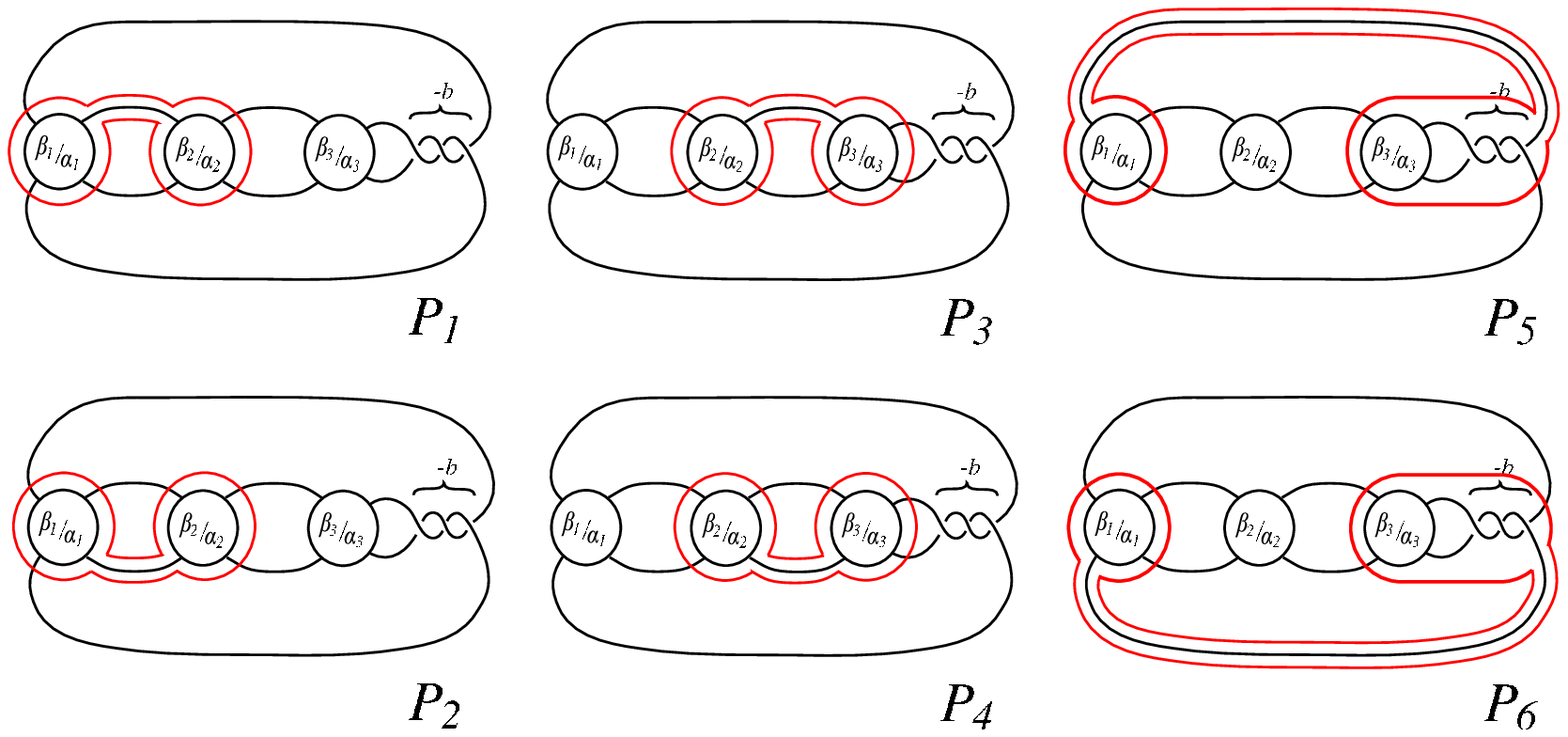}
\end{center}
\caption{}
\label{fig-mont-links}
\end{figure}
%
(see Figure \ref{fig-mont-links}). 

(2) A 3-bridge elliptic Montesinos link admits 
a unique 3-bridge sphere up to isotopy.
\end{theorem}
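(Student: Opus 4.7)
The plan is to apply the refined Birman--Hilden correspondence proved in the earlier sections of this paper, which identifies isotopy classes of $3$-bridge spheres of $L$ with equivariant isotopy classes of genus-$2$ Heegaard surfaces of the double branched cover $M$ of $(S^3,L)$, with respect to the covering involution $\tau$. Since $L=L(-b;\beta_1/\alpha_1,\beta_2/\alpha_2,\beta_3/\alpha_3)$ is a Montesinos link, $M$ is a Seifert fibered space over $S^2$ with (at most) three exceptional fibers of indices $\alpha_1,\alpha_2,\alpha_3$, and $\tau$ preserves this Seifert fibration, descending to a reflection of the base orbifold whose fixed arc passes through all the cone points. In this setup, the classification of $3$-bridge spheres of $L$ up to isotopy is reduced to the classification of $\tau$-equivariant genus-$2$ Heegaard splittings of $M$.

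For part (1), I would invoke the classification of genus-$2$ Heegaard splittings of non-elliptic small Seifert fibered spaces (due to Boileau--Otal and related work cited earlier in the paper): every such splitting is isotopic to a vertical one, built from the preimage of a regular neighborhood of an arc in the base orbifold joining two of the three cone points. Up to orbifold isotopy there are exactly two such arcs for each unordered pair of cone points, corresponding to the two sides of the remaining cone point, giving at most $\binom{3}{2}\cdot 2=6$ vertical genus-$2$ splittings. Transporting these through the Birman--Hilden correspondence downstairs yields the six candidate $3$-bridge spheres $P_1,\ldots,P_6$ displayed in Figure \ref{fig-mont-links}, which produces the desired upper bound.

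For part (2), the double branched cover of a $3$-bridge elliptic Montesinos link is a spherical space form, and classical results on Heegaard splittings of such manifolds imply that whenever an elliptic Seifert manifold admits a genus-$2$ Heegaard splitting, that splitting is unique up to isotopy. Any representative of this isotopy class can then be isotoped to be $\tau$-equivariant, and it descends to the unique $3$-bridge sphere of $L$ asserted in the statement.

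The main obstacle is the equivariance issue in part (1): the classification upstairs bounds the number of genus-$2$ Heegaard splittings of $M$, but one must still verify that each $3$-bridge sphere of $L$ lifts to a splitting in this list (which requires $\tau$-equivariant rather than merely topological isotopy), and that each of the six vertical splittings can actually be realized $\tau$-equivariantly so as to descend to a genuine $3$-bridge sphere. Because the Seifert fibration of $M$ may admit additional symmetries permuting the exceptional fibers that do not commute nicely with $\tau$, there is no clean way to decide, within this framework, exactly which of $P_1,\ldots,P_6$ are mutually isotopic for a given Montesinos link. This is precisely the point at which the sharp arguments used for $\LL_1\cup\LL_2\cup\LL_3$ break down, and it is why Theorem \ref{thm-mont}(1) asserts only an upper bound rather than a complete classification.
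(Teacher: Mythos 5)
Your general framework (pass to the double branched cover, use the Boileau--Collins--Zieschang classification of genus-2 Heegaard surfaces of small Seifert manifolds) is the right one, but both halves of your argument have genuine gaps where the actual content of the paper's proof lives. In part (1), your count of six is obtained incorrectly: the classification of \cite[Theorem 2.5]{Boi} gives at most \emph{three} vertical Heegaard surfaces $F(1,2)$, $F(2,3)$, $F(3,1)$ up to isotopy in $M$ --- the two arcs ``on either side of the third cone point'' yield Heegaard surfaces that are isotopic in $M$, so your $\binom{3}{2}\cdot 2=6$ is not a count of isotopy classes of anything. The factor of $2$ in the paper comes from an entirely different place: Theorem \ref{prop-hs-3b-0}\,(2) shows that for a nonelliptic Montesinos link the map $\Phi_L$ from 3-bridge spheres to Heegaard surfaces is at most 2-to-1 (because $\pi_1(M)$ has infinite cyclic center, producing the extra outer automorphism $\alpha$ of the $\pi$-orbifold group, realized downstairs by the symmetry $\rho$ with $P_{i+1}=\rho(P_i)$). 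You correctly identify the equivariance issue as the obstacle but then leave it unresolved, whereas resolving it via the Boileau--Zimmermann machinery is precisely how the bound $3\times 2=6$ is proved. You also do not address the exceptional \emph{horizontal} genus-2 splittings in case (C) of the classification; the paper must rule these out by showing (Lemma \ref{lemma-exception}, Remark \ref{rmk-exception}) that their quotient links are not arborescent, hence not Montesinos.

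In part (2) the gap is more serious. Uniqueness of the genus-2 Heegaard surface upstairs does \emph{not} by itself give uniqueness of the 3-bridge sphere downstairs: one obtains a self-homeomorphism $\psi$ of $(S^3,L)$ carrying $P$ to $P_1$, but one must still show $\psi$ is pairwise isotopic to the identity, i.e.\ that $\Phi_L$ is injective. The injectivity criterion of Theorem \ref{prop-hs-3b-0} is unavailable here, since elliptic Montesinos links are exactly the links with finite $\pi$-orbifold group (Lemma \ref{lem-sc}), excluded from the ``sufficiently complicated'' hypothesis of Boileau--Zimmermann. The paper closes this gap by a separate argument: it invokes Sakuma's computation of $\sym(S^3,L)$ for elliptic Montesinos links (Proposition \ref{sakuma-elliptic}) and checks case by case that each generator of the symmetry group carries $P_1$ to a sphere isotopic to $P_1$ (Figure \ref{fig-isotopy-2}). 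Your proposal omits this step entirely, and it cannot be recovered from the Heegaard-splitting classification alone.
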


In the remainder of the introduction, we explain our strategy.
For a given 3-bridge link $L$,
we have a map $\Phi_L$
from the set of isotopy classes of 3-bridge spheres of $L$ 
to the set of isotopy classes of genus-2 Heegaard surfaces $F$ of the double branched covering $M_2(L)$, 
whose hyper-elliptic involutions $\tau_F$ are the covering transformation $\tau_L$.
\begin{eqnarray*}
\begin{array}{rll}
\Phi_L &:& \{{\rm 3\text{-}bridge\ spheres\ of\ }L\}/{\sim}\\
&&\rightarrow \{{\rm genus\text{-}2\ Heegaard\ surfaces\ }F\ {\rm of\ }M\ 
{\rm s.t.\ }\tau_F=\tau_L\}/{\sim}.
\end{array}
\end{eqnarray*}
It is obvious that $\Phi_L$ is surjective. 
In Section \ref{3b-hs}, we prove the following theorem
which gives a condition for $\Phi_L$ to be injective,
by using the results of Boileau and Zimmermann \cite{Boi6}.

\begin{theorem}\label{prop-hs-3b-0}
Let $L$ be a prime, unsplittable 3-bridge link.

(1) If $L$ is not a Montesinos link, then $\Phi_L$ is bijective.

(2) If $L$ is a nonelliptic Montesinos link,
then $\Phi_L$ is at most 2-1.
\end{theorem}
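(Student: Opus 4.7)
The plan is to regard $\Phi_L$ as the quotient of the Birman--Hilden correspondence between 3-bridge spheres of $L$ and $\tau_L$-equivariant genus-2 Heegaard surfaces of $M_2(L)$, and then reduce the question on the fibers of $\Phi_L$ to whether the hyperelliptic involution on a given genus-2 Heegaard splitting of $M_2(L)$ is unique up to ambient isotopy preserving the splitting. The latter is a geometric uniqueness problem for involutions on 3-manifolds, which is exactly what Boileau--Zimmermann \cite{Boi6} address.

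Surjectivity of $\Phi_L$ is immediate, since a Heegaard surface $F$ with $\tau_F=\tau_L$ is $\tau_L$-invariant and descends to a 3-bridge sphere of $L$. For the fiber analysis, suppose $S_1,S_2$ are 3-bridge spheres of $L$ whose lifts $F_1,F_2$ are isotopic in $M_2(L)$. Choose an ambient isotopy $\{h_t\}$ of $M_2(L)$ with $h_0=\id$ and $h_1(F_1)=F_2$, and set $\sigma:=h_1^{-1}\tau_L h_1$. Since $\tau_L$ preserves each of the handlebodies bounded by $F_2$ and acts on them by the hyperelliptic involution, $\sigma$ preserves each handlebody bounded by $F_1$ and restricts to a hyperelliptic involution of $F_1$. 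Thus both $\tau_L$ and $\sigma$ are hyperelliptic involutions of the same genus-2 Heegaard splitting of $M_2(L)$. If there exists an ambient isotopy $\{g_t\}$ with $g_0=\id$, $g_t(F_1)=F_1$, and $g_1\sigma g_1^{-1}=\tau_L$, then $hg_1$ is isotopic to the identity, sends $F_1$ to $F_2$, and commutes with $\tau_L$; by Birman--Hilden it descends to a pairwise self-isotopy of $(S^3,L)$ sending $S_1$ to $S_2$. Consequently, injectivity of $\Phi_L$ on the class of $[F_1]$ is equivalent to the uniqueness of $\tau_L$ as a hyperelliptic involution on the splitting determined by $F_1$, up to ambient splitting-preserving isotopy.

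The classification of hyperelliptic involutions of genus-2 Heegaard splittings due to Boileau and Zimmermann \cite{Boi6} supplies this uniqueness. Under our hypothesis that $L$ is a prime, unsplittable 3-bridge link, it yields the following. If $L$ is not a Montesinos link, every genus-2 Heegaard splitting of $M_2(L)$ admits a unique hyperelliptic involution up to splitting-preserving isotopy, so $\sigma$ is isotopic to $\tau_L$ and the argument above gives (1). If $L$ is a nonelliptic Montesinos link, the Seifert fibration of $M_2(L)$ produces at most two non-isotopic hyperelliptic involutions on a given splitting (distinguished by their action on the base orbifold), so the argument still applies but only after matching $\sigma$ with one of the two classes, and each fiber of $\Phi_L$ therefore has cardinality at most two, which is (2). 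The main obstacle is extracting from \cite{Boi6} the statement in the precise form needed here---uniqueness up to ambient isotopy preserving the splitting, rather than mere conjugacy by an arbitrary orientation-preserving diffeomorphism---and verifying that our prime, unsplittable 3-bridge links fall squarely into the dichotomy between the generic (unique-involution) and the Seifert-fibered (Montesinos) cases on which their classification is based.
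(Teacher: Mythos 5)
The gap is at the descent step. You reduce injectivity of $\Phi_L$ to finding a splitting-preserving ambient isotopy $g_t$ conjugating $\sigma=h_1^{-1}\tau_L h_1$ to $\tau_L$, and then assert that the resulting $\tau_L$-equivariant homeomorphism $h_1g_1$ (which is isotopic to the identity in $M_2(L)$ and sends $F_1$ to $F_2$) ``descends to a pairwise self-isotopy of $(S^3,L)$.'' It does not: an equivariant homeomorphism isotopic to the identity descends to a self-homeomorphism $\psi$ of $(S^3,L)$ carrying $S_1$ to $S_2$, but the isotopy upstairs need not be equivariant, so nothing guarantees that $\psi$ is \emph{pairwise} isotopic to $\id_{(S^3,L)}$. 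Deciding exactly this is the entire content of the theorem. The paper's proof attacks it through the $\pi$-orbifold group: Lemma \ref{lem-sc} shows that under the hypotheses $L$ is sufficiently complicated, so by Proposition \ref{b-z} (Boileau--Zimmermann) $\sym(S^3,L)\cong\out O(L)$, and $\psi$ is pairwise isotopic to the identity iff it induces the trivial outer automorphism of $O(L)$. Lemma \ref{condition4-1holds} (i.e.\ \cite[Proposition~4.12]{Boi6}, extended beyond the Haken case via the Seifert fibered space conjecture) then computes the possible outer automorphisms induced by a lift of $\varphi$: only the identity when $M_2(L)$ is not Seifert fibered with infinite cyclic center, and the identity or a single extra automorphism $\alpha$ otherwise --- which is exactly where the bound ``at most $2$--$1$'' in (2) comes from. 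What you cite as ``the classification of hyperelliptic involutions of genus-2 Heegaard splittings due to Boileau and Zimmermann'' is not a result of \cite{Boi6} in the form you need, and you yourself flag extracting it as ``the main obstacle''; that obstacle is precisely the unproved core of the argument.

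A second, smaller omission: in part (1) you implicitly identify the Seifert-fibered case with the Montesinos case. But $M_2(L)$ can be Seifert fibered with infinite cyclic center while $L$ is not a Montesinos link (then $L$ is a Seifert link); the paper handles this sub-case separately, using \cite[Corollary~1.4]{Boi6} to show $O(L)$ has nontrivial center, which forces the induced automorphism to be the identity and preserves injectivity of $\Phi_L$ there. Without this, your dichotomy would incorrectly put some non-Montesinos links into the ``at most $2$--$1$'' branch. (There is also a minor composition slip --- with $g_1\sigma g_1^{-1}=\tau_L$ the equivariant map is $h_1g_1^{-1}$ rather than $h_1g_1$ --- but that is cosmetic compared to the descent issue.)
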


\begin{remark}
{\rm It follows from Theorem \ref{thm-mont}
that $\Phi_L$ is bijective when $L$ is an elliptic Montesinos link.
}
\end{remark}

By the above theorem, classification of 3-bridge spheres of 3-bridge arborescent links
(which are not Montesinos links)
is reduced to classification of genus-2 Heegaard surfaces of the double branched coverings.
A refinement of the results by Kobayashi \cite{Kob} and Morimoto \cite{Mor} (see \cite[Theorem 5]{Jan2})
enables us to obtain a complete list of genus-2 Heegaard surfaces.
To obtain a classification of the genus-2 Heegaard surfaces,
we use their commutator invariants (see Section \ref{classify}).
The commutator invariant turns out to be a complete invariant of 
genus-2 Heegaard splittings for genus-2 graph manifolds.
Namely, when given two 3-bridge spheres of a link cannot be distinguished by the commutator invariants, 
we can construct an isotopy between them.
This completes the classification of 3-bridge arborescent links, 
which are not Montesinos links,
and their 3-bridge spheres up to isotopy.

For Montesinos links, however, the pre-images of the 3-bridge spheres $P_i$ and $P_{i+1}$ $(i=1,3,5)$
are isotopic Heegaard surfaces, and thus
we cannot distinguish the 3-bridge spheres by the methods in this paper.
We also give, in Remark \ref{rmk-nonelliptic},
certain sufficient conditions for the 3-bridge spheres in Theorem \ref{thm-mont} (1) to be mutually isotopic.
We conjecture that these conditions actually provide
a necessary and sufficient condition.

This paper is organized as follows.
In Section \ref{3b-hs}, we prove Theorem \ref{prop-hs-3b-0}
on a relation between 3-bridge spheres and genus-2 Heegaard surfaces.
In Section \ref{sec-proof2}
we list all 3-bridge spheres of (non-Montesinos) arborescent links
up to isotopy
by using the results of Kobayashi \cite{Kob} and Morimoto \cite{Mor}.
This, together with the results in Section \ref{sec-simple-exception},
completes the proof of Theorem \ref{thm-main-2}.
In Section \ref{sec-pf-p0}, we prove Lemma \ref{lem-p0}
which is used in Section \ref{sec-proof2}.
In Section \ref{sec-simple-exception},
we prove that the \lq\lq simple exceptional links\rq\rq\  
admit a unique 3-bridge sphere up to isotopy.
In Section \ref{classify}, 
we use the commutator invariants of genus-2 Heegaard splittings
to distinguish two Heegaard surfaces up to isotopy.
In Section \ref{sec-mont}, 
we give the list of 3-bridge spheres of 3-bridge Montesinos links
and some sufficient conditions for them to be isotopic.


\section{3-bridge spheres and genus-2 Heegaard surfaces}\label{3b-hs}

Let $M$ be a closed orientable 3-manifold of Heegaard genus 2,
and let $(V_1, V_2; F)$ be a genus-2 {\it Heegaard splitting} of $M$,
i.e., $V_1$ and $V_2$ are genus-2 handlebodies in $M$
such that $M=V_1\cup V_2$ and $F=\partial V_1=\partial V_2=V_1\cap V_2$.
By \cite[Proof of Theorem 5]{Bir},
there is an involution $\tau$ on $M$
which satisfies the following condition.

\begin{itemize}
\item[($\ast$)] $\tau(V_i)=V_i$ $(i=1,2)$
and $\tau|_{V_i}$ is {\it equivalent} to the standard involution $\T$
on a standard genus-2 handlebody $V$ as illustrated in Figure \ref{fig-hs-3b}.
To be precise, there is a homeomorphism $\psi_i:V_i\rightarrow V$
such that $\T=\psi_i(\tau|_{V_i})\psi_i^{-1}$ $(i=1,2)$.
\end{itemize}
%
\begin{figure}
\begin{center}
\includegraphics*[width=9cm]{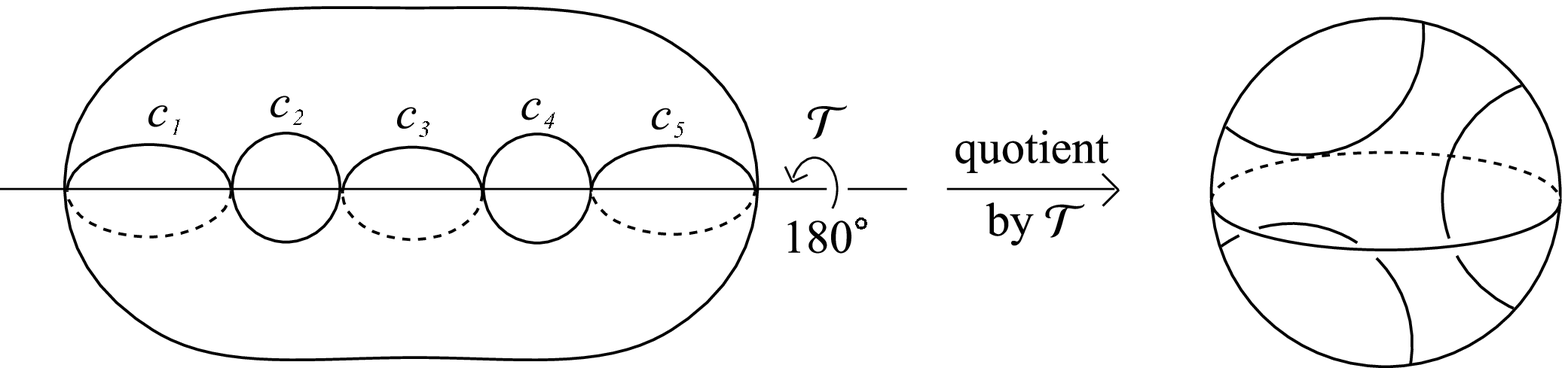}
\end{center}
\caption{}
\label{fig-hs-3b}
\end{figure}
%

For each genus-2 Heegaard splitting $(V_1, V_2; F)$,
we call an involution of $M$ satisfying the condition $(\ast)$
the {\it hyper-elliptic involution} 
associated with $(V_1, V_2; F)$ 
(or associated with $F$, in brief)
and denote it by $\tau_F$.
The strong equivalence class of $\tau_F$
is uniquely determined by the isotopy class of $(V_1, V_2; F)$
(see \cite[Proposition 5]{Jan2}).
Here, 
two involutions $\tau$ and $\tau'$ are said to be {\it strongly equivalent}
if there exists a homeomorphism $h$ on $M$ 
such that $h\tau h^{-1}=\tau'$ and that 
$h$ is isotopic to the identity map $\id_M$, and
two Heegaard splittings $(V_1, V_2; F)$ and $(W_1, W_2; G)$ of a 3-manifold $M$ 
are said to be {\it isotopic}
if there exists a self-homeomorphism $f$ of $M$ 
such that $f(F)=G$ and $f$ is isotopic to the identity map $\id_M$ on $M$.

Let $L$ be a 3-bridge link 
and let $M_2(L)$ be the double branched covering of $S^3$ 
branched over $L$.
Let $\tau_L$ be the covering transformation on $M_2(L)$.
If $S$ is a 3-bridge sphere of $L$, 
its pre-image in $M_2(L)$ is a genus-2 Heegaard surface $F$
such that $\tau_F=\tau_L$.
Moreover, the isotopy class of $F$ 
is uniquely determined by that of $S$
because an isotopy on $(S^3, L)$ lifts to an isotopy on $M_2(L)$.
Thus we obtain the following map $\Phi_L$
from the set of isotopy classes of 3-bridge spheres of $L$ 
to the set of isotopy classes of genus-2 Heegaard surfaces of $M_2(L)$, 
whose hyper-elliptic involutions are $\tau_L$.
\begin{eqnarray*}
\begin{array}{rll}
\Phi_L &:& \{{\rm 3\text{-}bridge\ spheres\ of\ }L\}/{\sim}\\
&&\rightarrow \{{\rm genus\text{-}2\ Heegaard\ surfaces\ }F\ {\rm of\ }M\ 
{\rm s.t.\ }\tau_F=\tau_L\}/{\sim}.
\end{array}
\end{eqnarray*}

It is obvious that $\Phi_L$ is surjective. 
In the following, we prove Theorem \ref{prop-hs-3b-0}
which gives a condition for $\Phi_L$ to be injective.

To prove Theorem \ref{prop-hs-3b-0}, 
we use a result by Boileau and Zimmermann \cite{Boi6}.
We recall a few concepts introduced in \cite{Boi6}.
Let $p:\widetilde{M}\rightarrow M$ be the universal covering of $M=M_2(L)$.
The group $O(L)$ generated by all lifts of $\tau_L$ to $\widetilde{M}$
is called the {\it $\pi$-orbifold group} of $L$.
The quotient space $\widetilde{M}/O(L)$ is
an orbifold with underlying space $S^3$ and singular set $L$.
A link in $S^3$ is said to be {\it sufficiently complicated} 
if it is prime, unsplittable and $O(L)$ is infinite.

\begin{remark}
{\rm 
The $\pi$-orbifold group $O(L)$ of a link $L$ 
is isomorphic to the quotient group 
$\pi_1(S^3\setminus L)/\langle\langle m^2 \rangle\rangle$, 
where $\langle\langle m^2 \rangle\rangle$ 
is the subgroup of $\pi_1(S^3\setminus L)$
normally generated by the square of the meridian of $L$
(cf. \cite[p.\,187]{Boi6})}.
\end{remark}

\begin{remark}
{\rm 
Let $L$ be an arborescent link.
Then, since $M=M_2(L)$ admits a reduced graph structure, 
$M$ is irreducible.
This implies that $L$ is prime and unsplittable
(cf. \cite[Proposition 1.1]{Boi6})}.
\end{remark}

We obtain the following lemma 
from the orbifold theorem \cite{Boi7,Coo} and a result of Dunbar \cite{Dun}.

\begin{lemma}\label{lem-sc}
Let $L$ be a prime, unsplittable link.
If $O(L)$ is finite, then $L$ is an elliptic Montesinos link.
\end{lemma}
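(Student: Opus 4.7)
The plan is to view $L$ as the singular locus of a 3-orbifold that the hypotheses force to be spherical, and then identify $L$ from the known census of such orbifolds. Write $\mathcal{O}_L$ for the orbifold with underlying space $S^3$, singular set $L$ and cone angle $\pi$ along every component; by the remark above one has $O(L)\cong \pi_1^{\mathrm{orb}}(\mathcal{O}_L)$, and $\mathcal{O}_L$ is very good since its orientation double cover is the manifold $M_2(L)$.

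Because $L$ is prime and unsplittable, I would first check that $\mathcal{O}_L$ is irreducible as a 3-orbifold: any essential spherical 2-suborbifold would yield either a non-trivial splitting 2-sphere or a Conway sphere witnessing a non-trivial prime decomposition of $L$, each contradicting the hypothesis. I would then invoke the orbifold theorem of Boileau-Leeb-Porti \cite{Boi7} and Cooper-Hodgson-Kerckhoff \cite{Coo}, which asserts that a compact, orientable, irreducible, very good 3-orbifold with non-empty ramification locus either admits one of the eight Thurston geometries or has a non-trivial JSJ decomposition into hyperbolic and Seifert-fibered pieces. Assuming $O(L)=\pi_1^{\mathrm{orb}}(\mathcal{O}_L)$ is finite rules out a non-trivial JSJ decomposition and all non-spherical geometries, leaving $\mathcal{O}_L\cong S^3/\Gamma$ for some finite $\Gamma\subset SO(4)$ acting on $S^3$ with fixed-point set projecting to $L$.

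To finish, I would read off the orbifold from Dunbar's classification \cite{Dun} of spherical 3-orbifolds, restricted to those whose underlying space is $S^3$ and whose singular set is a link with every branch index equal to $2$. Each such orbifold is the $\pi$-orbifold of an elliptic Montesinos link $L(-b;\beta_1/\alpha_1,\beta_2/\alpha_2,\beta_3/\alpha_3)$ with $(\alpha_1,\alpha_2,\alpha_3)$ one of the platonic triples $(2,2,n)$, $(2,3,3)$, $(2,3,4)$, $(2,3,5)$, so $L$ is forced to be of this form.

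The main obstacle is the last step: translating Dunbar's geometric data into Conway normal form and verifying that the spherical orbifolds with all cone angles $\pi$ exhaust precisely the $\pi$-orbifolds of elliptic Montesinos links. The orbifold-theorem step is a direct citation once irreducibility is established; by contrast, the bookkeeping required to match the finite subgroups of $SO(4)$ with the Montesinos classification is where the genuine work lies.
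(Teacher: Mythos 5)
Your proposal follows essentially the same route as the paper: establish irreducibility of the orbifold $\widetilde{M}/O(L)$ from primeness and unsplittability (the paper cites the equivariant sphere theorem and the branched covering theorem for this), apply the orbifold theorem together with finiteness of $O(L)$ to conclude the orbifold is spherical, and then read off $L$ from Dunbar's classification. The only quibble is terminological: the $2$-suborbifold witnessing non-primeness is a sphere meeting $L$ in two points, not a Conway sphere (four points, which is Euclidean rather than spherical when all cone angles are $\pi$).
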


\begin{proof}
Let $L$ be a prime, unsplittable link 
and assume that $O(L)$ is finite.
By the equivariant sphere theorem  
and the branched covering theorem (see \cite{Mor3}),
the orbifold $\widetilde{M}/O(L)$ is irreducible.
By the orbifold theorem \cite{Boi7,Coo},
$M$ is geometric and $\tau_L$ is an isometry of $M$.
Since $\pi_1(M)$ is finite by the assumption, 
$M$ is spherical.
Hence, the orbifold $\widetilde{M}/O(L)$ with underlying space $S^3$ 
and singular set $L$ is a spherical orbifold.
By Dunbar's classification of spherical 3-orbifolds (see Table 7 in \cite{Dun}),
we see that $L$ is an elliptic Montesinos link.
\end{proof}
\vspace{2mm}

Let $\gamma$ be the homomorphism
from the symmetry group $\sym (S^3, L)=\pi_0\diff (S^3, L)$ of $L$
to the outer automorphism group $\out O(L)=\aut O(L)/\inn O(L)$ 
defined by lifting diffeomorphisms and isotopies 
from $(S^3, L)$ to $\widetilde{M}$:
every lifted diffeomorphism induces an automorphism of $O(L)$ by conjugation.

\begin{proposition}\label{b-z}
(\cite[Theorem 2]{Boi6})
Let $L$ be a sufficiently complicated link in $S^3$. 
Then $\gamma : \sym(S^3, L)\rightarrow \out O(L)$ is an isomorphism.
\end{proposition}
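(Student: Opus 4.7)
The plan is to establish bijectivity of $\gamma$ in three steps---well-definedness, injectivity, and surjectivity---each leveraging the geometric structure of $\widetilde M$ provided by the orbifold theorem \cite{Boi7,Coo} together with Mostow rigidity.

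For well-definedness, any two lifts of $f\in\diff(S^3,L)$ to $\widetilde M$ differ by an element of $O(L)$, so conjugation by a lift of $f$ is well defined on $O(L)$ modulo $\inn O(L)$; moreover any isotopy in $\diff(S^3,L)$ lifts to a path of lifts in $\diff(\widetilde M)$, and the resulting path of automorphisms of $O(L)$ stays in a single coset of $\inn O(L)$. For injectivity, suppose $\gamma([f])=1$. Then some lift $\widetilde f$ of $f$ conjugates $O(L)$ by an element $g\in O(L)$, and replacing $\widetilde f$ by $g^{-1}\widetilde f$ we may assume $\widetilde f$ centralizes $O(L)$. Hence $\widetilde f$ descends to an orbifold self-diffeomorphism $\bar f$ of $\mathcal O=\widetilde M/O(L)$ inducing the identity on $\pi_1^{\rm orb}(\mathcal O)=O(L)$. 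Since $L$ is sufficiently complicated, $O(L)$ is infinite, so by Lemma \ref{lem-sc} together with the orbifold theorem $\mathcal O$ admits a geometric decomposition or a nontrivial JSJ splitting. Mostow rigidity in the hyperbolic case, or a Waldhausen-type rigidity theorem for Haken orbifolds in the Seifert/graph case, then shows that $\bar f$ is isotopic to the identity, and hence $f\simeq\id$ in $\diff(S^3,L)$, i.e., $[f]=1$ in $\sym(S^3,L)$.

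For surjectivity, take $[\varphi]\in\out O(L)$. The key step is to realize $\varphi$ geometrically: any automorphism of $O(L)$ should preserve the conjugacy class of the peripheral data (the squares of the meridians generate the maximal elements of a distinguished family of $\Z/2$-subgroups, which can be characterized algebraically). By Mostow rigidity in the hyperbolic case, or by the classification of isomorphisms of fundamental groups of graph manifolds in the general case, $\varphi$ is induced by an $O(L)$-equivariant self-diffeomorphism of $\widetilde M$, which descends to $f\in\diff(S^3,L)$ with $\gamma([f])=[\varphi]$.

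The main obstacle will be the graph-manifold case, where neither $\widetilde M$ nor $\mathcal O$ is hyperbolic. Here one must show that any automorphism of $O(L)$ preserves the canonical JSJ tori of $\mathcal O$ and then patch together the realizations on each geometric piece into a global orbifold diffeomorphism. This requires an equivariant JSJ theory compatible with the $\pi$-orbifold involution, together with realization of mapping classes on Seifert-fibered orbifolds with boundary in a way that agrees across adjacent JSJ tori; the combinatorial matching at the tori, and the distinction between inner and outer ambiguity in this matching, is the delicate point.
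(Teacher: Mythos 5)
The paper offers no proof of Proposition \ref{b-z}: it is imported verbatim as Theorem 2 of Boileau--Zimmermann \cite{Boi6}, so the only meaningful comparison is with that source. Your outline does follow the general strategy of \cite{Boi6} --- pass to the $\pi$-orbifold $\mathcal{O}=\widetilde M/O(L)$, decompose it geometrically, and use rigidity on the pieces --- but as a proof it has genuine gaps. The injectivity step ends with an appeal to ``Mostow rigidity \dots\ or a Waldhausen-type rigidity theorem for Haken orbifolds in the Seifert/graph case''; that rigidity theorem for orbifolds is precisely the nontrivial content of \cite{Boi6} (resting on Bonahon--Siebenmann's characteristic splitting of orbifolds), not something that can be invoked as a black box. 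The surjectivity step is only heuristic: ``any automorphism of $O(L)$ should preserve the conjugacy class of the peripheral data'' is asserted, not proved, and the algebraic characterization of the meridional $\Z/2$-subgroups inside $O(L)$ is one of the delicate points of the real argument. Most importantly, you yourself flag the graph-manifold case --- equivariant JSJ theory, realization of mapping classes on the Seifert pieces, and the matching of inner/outer ambiguities across the JSJ tori --- as an unresolved ``main obstacle.'' That is exactly the case this paper needs, since the double branched covers of arborescent links are graph manifolds; leaving it open means the statement remains unproved precisely where it is applied.

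Two smaller points. Since ``sufficiently complicated'' already includes $O(L)$ infinite by definition, the appeal to Lemma \ref{lem-sc} in your injectivity argument is redundant. And deducing $[f]=1$ in $\sym(S^3,L)$ from $\bar f$ being orbifold-isotopic to the identity requires identifying orbifold isotopies of $\mathcal{O}$ with pairwise isotopies of $(S^3,L)$; this holds because the underlying space of $\mathcal{O}$ is $S^3$ with singular locus $L$, but it should be said. In short: your approach is the right one (it is the approach of \cite{Boi6}), but the write-up is a road map with the hardest stretches left unpaved, whereas the paper simply cites the completed result.
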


We need the following lemma to prove Theorem \ref{prop-hs-3b-0}.

\begin{lemma}\label{condition4-1holds}
Let $M=M_2(L)$, $\tau_L$, $\widetilde{M}$, $O(L)$ be as above,
and assume that $L$ is sufficiently complicated.
Suppose that $\varphi$ is a self-homeomorphism of $M$ 
which is homotopic to the identity on $M$ and commutes with $\tau_L$.

(1) If $M$ is not a Seifert fibered space
such that the center of $\pi_1(M)$ is an infinite cyclic group,
then there exists a lift $\widetilde{\varphi}$ of $\varphi$ to $\widetilde{M}$
which induces by conjugation the identity on $O(L)$.

(2) If $M$ is a Seifert fibered space and 
the center of $\pi_1(M)$ is an infinite cyclic group, 
then there exists a lift $\widetilde{\varphi}$ of $\varphi$ to $\widetilde{M}$
which induces by conjugation the identity or $\alpha$ on $O(L)$.
Here, $\alpha$ is the automorphism on $O(L)$ given by
\begin{eqnarray*}
\alpha(x)=\left\{
\begin{array}{ll}
x & ([x,h]=1),\\
xh & ([x,h]\neq 1),
\end{array}
\right.
\end{eqnarray*}
where $h$ is an element of $O(L)$ representing a lift of a regular fiber of $M$.
\end{lemma}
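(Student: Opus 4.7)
The plan is to lift $\varphi$ to the universal cover $\widetilde M$ and constrain the conjugation action on $O(L)$ using the hypothesis $\varphi\simeq\id_M$. Because $\varphi$ commutes with $\tau_L$, any lift $\widetilde\varphi$ normalizes $O(L)$ and induces an automorphism $\Psi_{\widetilde\varphi}\in\aut O(L)$ by conjugation; different choices of $\widetilde\varphi$ differ by pre-multiplication with a deck transformation and therefore change $\Psi_{\widetilde\varphi}$ by an inner automorphism coming from $\pi_1(M)\subset O(L)$.

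First I would use the homotopy to pin down a good lift. Since $\varphi\simeq\id_M$, the induced class $\varphi_*\in\out\pi_1(M)$ is trivial, so after multiplying $\widetilde\varphi$ by a suitable deck transformation one may assume $\Psi_{\widetilde\varphi}|_{\pi_1(M)}=\id$. Such a lift is unique up to multiplication by an element $k\in Z(\pi_1(M))$. Fixing a lift $\tau\in O(L)$ of $\tau_L$ and writing $\Psi_{\widetilde\varphi}(\tau)=\tau g$ with $g\in\pi_1(M)$, the identity $\Psi_{\widetilde\varphi}(\tau^2)=\tau^2$ (which holds since $\tau^2\in\pi_1(M)$ is fixed) forces $\tau g\tau^{-1}=g^{-1}$; and the replacement $\widetilde\varphi\mapsto k\widetilde\varphi$ sends $g$ to $\sigma(k)k^{-1}g$, where $\sigma(\cdot)=\tau(\cdot)\tau^{-1}$.

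It then remains to show that the coset $g\cdot\{\sigma(k)k^{-1}:k\in Z(\pi_1(M))\}$ contains $1$ in case (1), and contains $1$ or $h$ in case (2). Since $L$ is sufficiently complicated, Lemma \ref{lem-sc} ensures $O(L)$ is infinite, and the orbifold theorem \cite{Boi7,Coo} supplies $M$ with a $\tau_L$-invariant geometric decomposition. In case (1), either $Z(\pi_1(M))$ is trivial---in which case $g$ is uniquely determined, and transporting $\Psi_{\widetilde\varphi}$ back to $\sym(S^3,L)$ via Proposition \ref{b-z} together with Waldhausen or Mostow rigidity for the underlying $M$ forces $g=1$---or the subgroup $\{\sigma(k)k^{-1}:k\in Z(\pi_1(M))\}$ is large enough to absorb $g$. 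In case (2), $M$ is Seifert fibered with $Z(\pi_1(M))=\langle h\rangle$; the action of $\sigma$ satisfies $\sigma(h)=h^{\pm1}$, and a direct coset computation combined with identification of the fibre-shift symmetry of $(S^3,L)$ with the automorphism $\alpha$ of the statement shows the coset meets $\{1,h\}$.

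The main obstacle is the trivial-center subcase of case (1): the absence of central flexibility must be compensated by the rigidity of $M$, so that no exceptional $g\neq1$ with $\tau g\tau^{-1}=g^{-1}$ can arise from a null-homotopic $\varphi$; and, in case (2), correctly identifying the fibre-shift automorphism of $O(L)$ produced by the coset analysis with the combinatorially defined $\alpha$.
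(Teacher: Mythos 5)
Your setup (lifting $\varphi$, normalizing the lift so that it induces the identity on $\pi_1(M)$, writing $\Psi_{\widetilde\varphi}(\tau)=\tau g$ and tracking how $g$ changes under central corrections) is sound, but the step you yourself flag as ``the main obstacle'' is exactly where the proposal breaks down, and the argument you offer for it does not work. In case (1) you claim that Proposition \ref{b-z} together with ``Waldhausen or Mostow rigidity for $M$'' forces $g=1$ when the center is trivial. Proposition \ref{b-z} only identifies the class of $\Psi_{\widetilde\varphi}$ in $\out O(L)$ with some element of $\sym(S^3,L)$; concluding that this element is trivial is precisely what the lemma is supposed to deliver as input to the proof of Theorem \ref{prop-hs-3b-0}, so the appeal is circular. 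Rigidity of $M$ alone cannot settle it either, because a homeomorphism isotopic to the identity on $M$ need not be $\tau_L$-equivariantly isotopic to the identity --- that is exactly the phenomenon behind the exceptional automorphism $\alpha$ in case (2). What actually closes the trivial-center case is an algebraic observation you omit: applying $\Psi_{\widetilde\varphi}$ to $\tau x\tau^{-1}\in\pi_1(M)$ for $x\in\pi_1(M)$ and using $\Psi_{\widetilde\varphi}|_{\pi_1(M)}=\id$ yields $gxg^{-1}=x$ for all $x$, hence $g\in Z(\pi_1(M))$; if the center is trivial, $g=1$ with no rigidity input at all. If the center is nontrivial, one must know that $M$ is Seifert fibered before the case analysis on $Z(\pi_1(M))=\langle h\rangle$ can be run, and your proposal never explains how to obtain this when $M$ is not Haken.

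That last point is the entire content of the paper's proof: the statement is \cite[Proposition 4.12]{Boi6}, proved there under the hypothesis that $M$ is Haken, a hypothesis used only to guarantee that a nontrivial center of $\pi_1(M)$ forces $M$ to be Seifert fibered; the paper removes it by invoking the affirmative solution of the Seifert fibered space conjecture \cite[Corollary 8.3]{Gab}. Your proposal neither cites the Boileau--Zimmermann result nor supplies the missing implication (nontrivial center $\Rightarrow$ Seifert fibered) in the non-Haken case, and its case (1) argument is not correct as written. The case (2) coset computation and the identification of the lift with $\tau h\tau^{-1}=h^{-1}$ giving the automorphism $\alpha$ are on the right track, but they too are only sketched and rely on the same unestablished reduction to the Seifert fibered situation.
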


\begin{proof}
This is proved in \cite[Proposition 4.12]{Boi6}
under the additional assumption that $M$ is Haken.
This assumption is used only to assure that
$M$ is a Seifert fibered space 
if the center of its fundamental group $\pi_1(M)$ is nontrivial.
However, by the affirmative answer
to the Seifert fibered space conjecture \cite[Corollary 8.3]{Gab},
the nontriviality of the center of $\pi_1(M)$ implies
that $M$ is a Seifert fibered space 
even if it is not Haken.
\end{proof}
\vspace{2mm}

{\sc Proof of Theorem \ref{prop-hs-3b-0}.}
Let $S$ and $S'$ be two 3-bridge spheres for 
a prime, unsplittable 3-bridge link $L$
and set $F:=p^{-1}(S)$ and $F':=p^{-1}(S')$, 
where $p$ is the covering projection.
Assume that the Heegaard surfaces $F$ and $F'$ of $M=M_2(L)$ are isotopic, 
namely, there is a homeomorphism $\varphi$ on $M$ sending $F$ to $F'$
which is isotopic to the identity map.
By the proof of \cite[Theorem 8]{Bir} 
(cf. the proof of \cite[Proposition 5]{Jan2}),
we may assume that $\varphi$ is $\tau_L$-equivariant.
Hence, we have a self-homeomorphism $\psi$ of $(S^3, L)$ 
which sends $S$ to $S'$ and lifts to $\varphi$.

(1) Assume that $L$ is not a Montesinos link.
Then $L$ is sufficiently complicated by Lemma \ref{lem-sc}. 

If $M$ is not a Seifert fibered space
such that $\pi_1(M)$ has an infinite cyclic center,
then there exists a lift $\widetilde{\varphi}$ of $\varphi$ to $\widetilde{M}$
which induces by conjugation the identity map on $O(L)$
by Lemma \ref{condition4-1holds} (1).
Hence $\varphi$ induces the identity map on $\out(O(L))$.
By Proposition \ref{b-z}, 
$\psi$ is isotopic to the identity.
Hence $\Phi_L$ is injective.

If $M$ is a Seifert fibered space 
such that $\pi_1(M)$ has an infinite cyclic center,
then $L$ is a Seifert link, namely,
$S^3\setminus L$ admits a Seifert fibration by circles
(see, for example, \cite[proof of Theorem 1.3]{Boi6}).
By \cite[Corollary 1.4]{Boi6},
$O(L)$ has a nontrivial center.
Hence, by \cite[Proposition 4.12]{Boi6},
there exists a lift $\widetilde{\varphi}$ of $\varphi$ to $\widetilde{M}$
which induces by conjugation the identity map on $O(L)$.
As in the previous case, we see by using Proposition \ref{b-z}
that $\Phi_L$ is injective.

(2) Assume that $L$ is a nonelliptic Montesinos link.
Then $L$ is sufficiently complicated by Lemma \ref{lem-sc}, 
and $M$ is a Seifert fibered space.
Note that the center of $\pi_1(M)$ is an infinite cyclic group
whose generator is a regular fiber.
By Lemma \ref{condition4-1holds} (2), 
there exists a lift $\widetilde{\varphi}$ of $\varphi$ to $\widetilde{M}$
which induces by conjugation the identity map or $\alpha$, 
as in Lemma \ref{condition4-1holds} (2), on $O(L)$.
Hence, by Proposition \ref{b-z}, 
we have at most two 3-bridge spheres up to isotopy 
for each isotopy class of genus-2 Heegaard surfaces.
\qed

\begin{remark}\label{remark-tau}
{\rm 
We note that the automorphism $\alpha$ of $O(L)$
in the above proof of Theorem \ref{prop-hs-3b-0} (2)
is induced by a lift of the symmetry $\rho$ of $(S^3, L)$,
in Figure \ref{fig-mont-inv}, 
to the universal cover $\widetilde{M}$ of $M=M_2(L)$.
%
\begin{figure}
\begin{center}
\includegraphics*[width=7cm]{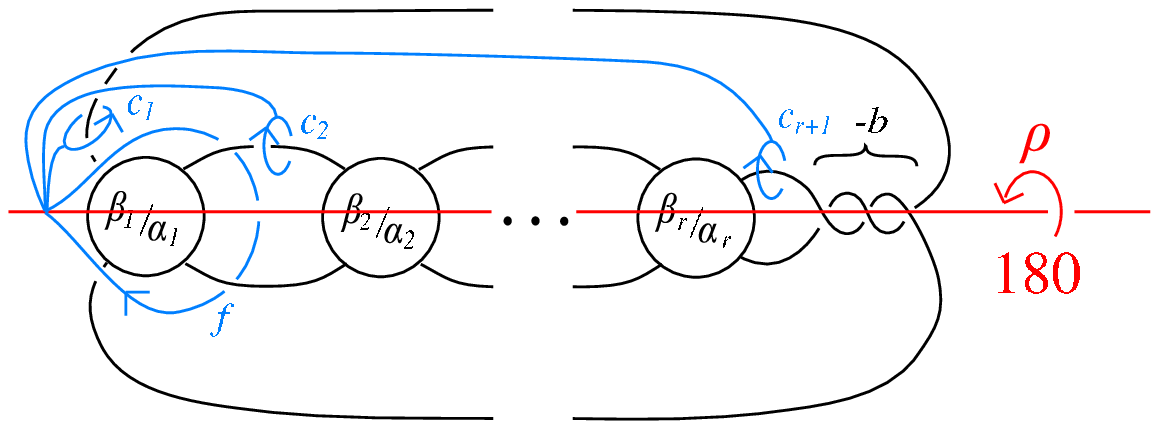}
\end{center}
\caption{}
\label{fig-mont-inv}
\end{figure}
%
%
In fact, 
if $L$ is a Montesinos link 
$L(-b; \beta_1/\alpha_1, \beta_2/\alpha_2,\dots ,\beta_r/\alpha_r)$
(see \cite[Section 2]{Jan2} for notation), 
then $O(L)\cong \pi_1(S^3\setminus L)/\langle\langle m^2 \rangle\rangle$ 
has a group presentation
$$
O(L)=
\langle
c_1, c_2, \dots, c_{r+1}, f \mid
c_i^2, c_ifc_i^{-1}f, 
(c_jc_{j+1})^{\alpha_j}f^{\beta_j}, 
c_1c_{r+1}f^b
\rangle ,
$$
where $c_i$ and $f$ are represented by the loops $c_i$ and $f$, respectively,
in Figure \ref{fig-mont-inv}
(cf. \cite{Boi5}).
Let $\widetilde{\rho}$ be a lift of $\rho$ to $\widetilde{M}$, 
and let $\iota_{\widetilde{\rho}}$ be the automorphism of $O(L)$
induced by conjugation by $\widetilde{\rho}$.
Then we can observe that 
$\iota_{\widetilde{\rho}}(c_1)=c_1^{-1}f=c_1f$,
$\iota_{\widetilde{\rho}}(c_j)=(c_1f)(c_jf)(c_1f)^{-1}$ $(j=2,\dots, r+1)$
and $\iota_{\widetilde{\rho}}(f)=(c_1f)f(c_1f)^{-1}$.
Thus $\iota_{\widetilde{\rho}}$ and $\alpha$
determine the same element of $\out O(L)$.
}
\end{remark}


\section{Proof of Theorem \ref{thm-main-2}}\label{sec-proof2}

We quickly recall several notations from \cite{Jan2}.
The symbol $F(\beta_1/\alpha_1,\dots,\beta_r/\alpha_r)$ with $F=D$, $A$ or $M\ddot{o}$
denotes a Seifert fibered space over a disk, an annulus or a M\"{o}bius band
with singular fibers of indices $\beta_1/\alpha_1,\dots,\beta_r/\alpha_r$.
Each boundary component of $F(\beta_1/\alpha_1,\dots,\beta_r/\alpha_r)$
has a {\it horizontal loop}
which intersects each regular fiber on the boundary component transversely in a single point.
See \cite[Section 2]{Jan2} for precise definition.
For a knot or a link $K$ in a manifold, $E(K)$ denotes the exterior of $K$.


%
Let $L$ be a 3-bridge arborescent link
which belongs to one of the families $\LL_1$, $\LL_2$ and $\LL_3$ 
in Theorem \ref{thm-main-1},
and let $S$ be a 3-bridge sphere of $L$.
Then $L$ is prime, unsplittable and is not a Montesinos link (cf. \cite[Theorem 2]{Jan2}).
Thus $L$ satisfies the assumption of Theorem \ref{prop-hs-3b-0} (1).
Hence the isotopy class of $S$ is uniquely determined by
the isotopy class of the genus-2 Heegaard surface $F$, obtained as the pre-image of $S$, 
of the double branched covering $M=M_2(L)$
such that the hyper-elliptic involution $\tau_F$ associated with $F$
is identified with the covering involution $\tau_L$ on $M$.

\begin{casecase}\label{casecase1}
$L=L_1((\beta_1/\alpha_1, \beta_1'/\alpha_1'),(\beta_2/\alpha_2, \beta_2'/\alpha_2'))\in\LL_1$.
\end{casecase}

Then 
by the Montesinos trick \cite{Mon3} (cf. \cite[Proposition 7]{Jan2}),
we see that the double branched cover $M$ of $S^3$ branched along $L$
is obtained from the Seifert fibered spaces $M_1=D(\beta_1/\alpha_1, \beta_1'/\alpha_1')$
and $M_2=D(\beta_2/\alpha_2, \beta_2'/\alpha_2')$,
by gluing them along their boundaries by a homeomorphism 
so that a horizontal loop and a regular fiber of $M_1$ are identified with a regular fiber and a horizontal loop of $M_2$, respectively.
Let $F_1$ and $F_2$ be the genus-2 Heegaard surfaces of $M$ 
obtained as the pre-images of the 3-bridge spheres $S_1$ and $S_2$ 
in Figure \ref{fig-3bspheres}, respectively.
When $M_1$ (resp. $M_2$) is homeomorphic to $D(1/2, -n/2n+1)$ 
for some integer $n$ with $|2n+1|>1$,
let $F_3$ (resp. $F_4$) be the genus-2 Heegaard surface of $M$ 
obtained as the pre-image of the 3-bridge spheres $S_3$ (resp. $S_4$).
Note that $F_1$ and $F_2$ are the two genus-2 Heegaard surfaces of $M$ belonging to the family F(1) in \cite{Mor} (cf. \cite[Proposition 5.2]{Mor} and \cite[Section 7, Case 1.1]{Jan2})
and that $F_3$ and $F_4$ are genus-2 Heegaard surfaces of $M$ belonging to the families F(2-1) and F(2-2) in \cite{Mor}, respectively.
By \cite[Section 5]{Mor}, 
we see that $F$ is isotopic to $F_1$, $F_2$, $F_3$ or $F_4$.
Hence, by Theorem \ref{prop-hs-3b-0},
$S$ is isotopic to $S_1$, $S_2$, $S_3$ or $S_4$.

\begin{casecase}\label{casecase2}
$L=L_2((\beta_1/\alpha_1, \beta_1'/\alpha_1'),(1/\alpha_0),(\beta_2/\alpha_2, \beta_2'/\alpha_2'))\in\LL_2$.
\end{casecase}

By \cite[Proposition 4]{Jan2} together with the fact that $\LL_1$, $\LL_2$ and $\LL_3$ are mutually disjoint (see \cite[Theorem 2]{Jan2}),
one of the following holds.
\begin{itemize}
\item[(i)] $L$ is equivalent to the link $L_2((-1/2,1/2), (1/n),(-1/2,1/2))\in \LL_2$
in Figure \ref{fig-gen-mont}.
%
\begin{figure}
\begin{center}
\includegraphics*[width=3cm]{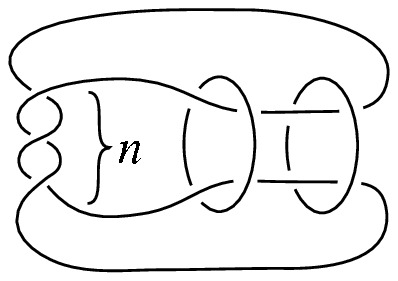}
\end{center}
\caption{$n\neq 0$}
\label{fig-gen-mont}
\end{figure}
%
In this case, $L$ is non-simple, i.e.,  $S^3\setminus L$ contains an essential torus.
\item[(ii)] The double branched covering $M=M_2(L)$ is a graph manifold which admits a nontrivial torus decomposition by separating tori.
\end{itemize}

The exceptional case where the condition (i) holds
is treated in Section \ref{sec-simple-exception},
where we show that $L$ admits a unique 3-bridge sphere up to isotopy
(Proposition \ref{lem-unique-sp}).
Thus we may assume that $M$ satisfies the condition (ii).

Then the 3-manifold $M$ and its genus-2 Heegaard surface $F$ satisfies the assumption of \cite[Proposition 7]{Jan2},
and we see from the proposition that 
one of the following conditions (a) and (b) holds.
\begin{itemize}
\item[(a)] $M$ belongs to the family M(1-b) and 
$F$ satisfies the condition (F1) in \cite[Theorem 5 and Definition 1]{Jan2}.
Namely, 
\begin{itemize}
\item[(M1-b)] 
$M$ is obtained by gluing $M_1=D(\beta_1/\alpha_1, \beta_1'/\alpha_1')$
and $M_2=E(K)=M\ddot{o}(1/\alpha_0)$, where $K$ is a 1-bridge knot in a lens space, so that
a horizontal loop and a regular fiber of $M_1$ are identified with a regular fiber and a horizontal loop of $M_2$, respectively,
and 
\item[(F1)] the intersection of the torus $T:=\partial M_1=\partial M_2$ 
and each handlebody bounded by $F$ is a single separating essential annulus (see Figure \ref{fig-hs} (F1)).
\end{itemize}
%
\begin{figure}
\begin{center}
\includegraphics*[width=13.5cm]{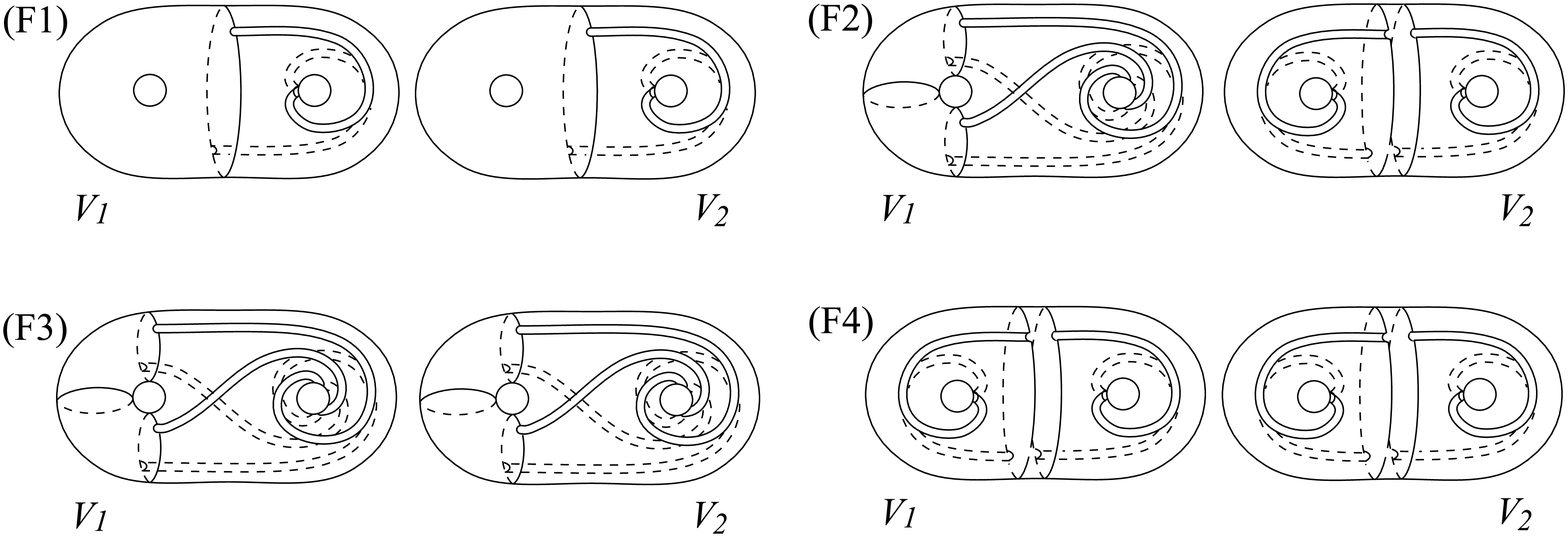}
\end{center}
\caption{}
\label{fig-hs}
\end{figure}
%
Moreover, 
\begin{itemize}
\item $M_1\cap F$ is an essential annulus saturated in the Seifert fibration of $M_1$, and
\item $M_2\cap F$ is a 2-holed torus which gives a 1-bridge decomposition of 
the 1-bridge knot $K$ such that $M_2=E(K)$.
\end{itemize}
\item[(b)] 
$M$ belongs to the family M(4) and 
$F$ satisfies the condition (F4) in \cite[Theorem 5]{Jan2}.
Namely, 
\begin{itemize}
\item[(M4)] 
$M$ is obtained by gluing 
$M_1=D(\beta_1/\alpha_1, \beta_1'/\alpha_1')$, $M_2=D(\beta_2/\alpha_2, \beta_2'/\alpha_2')$
and $M_3=E(S(2\alpha_0,1))=A(1/\alpha_0)$, 
where $S(2\alpha_0,1)$ is the 2-bridge link of type $(2\alpha_0,1)$,
so that 
a horizontal loop and a regular fiber of $M_i$ $(i=1,2)$ are identified with a regular fiber and a horizontal loop of $M_3$, respectively,
and
\item[(F4)] the intersection of the pair of tori $T:=\partial (M_1\cup M_2)=\partial M_3$ 
and each handlebody bounded by $F$ consists of 
two disjoint non-parallel separating essential annuli (see Figure \ref{fig-hs} (F4)).
\end{itemize}
Moreover, 
\begin{itemize}
\item $M_i\cap F$ is an essential saturated annulus in $M_i$ $(i=1,2)$, and
\item $M_3\cap F$ is a 2-bridge sphere.
\end{itemize}
\end{itemize}

Suppose that the condition (a) holds.
By \cite[Lemma 9]{Jan2},
we see that $\tau_L|_{M_2}$ is equivalent to the involution $g_1$ in \cite[Lemma 4 (2)]{Jan2},
where $(M_2, \fix g_1)/\langle g_1\rangle$ is the Montesinos pair
as illustrated in Figure \ref{fig-gamma} (1).
Recall from \cite[Remark 6]{Jan2} that the lens space containing $K$ is
homeomorphic to $P^2(0;1/\alpha_0)\cong S^2(\alpha_0;-1/2,1/2)$ and
that $K$ is a regular fiber of $P^2(0;1/\alpha_0)$ 
and the meridian of $K$ is a horizontal loop of $M_2=M\ddot{o}(1/\alpha_0)$.
Then $g_1$ extends to an involution,
denoted by the same symbol $g_1$,
of the regular neighborhood $N(K)$ of $K$ in the lens space
such that $(N(K), \fix g_1, K)/\langle g_1\rangle$
is as illustrated in Figure \ref{fig-gamma} (2).
%
\begin{figure}
\begin{center}
\includegraphics*[width=6.2cm]{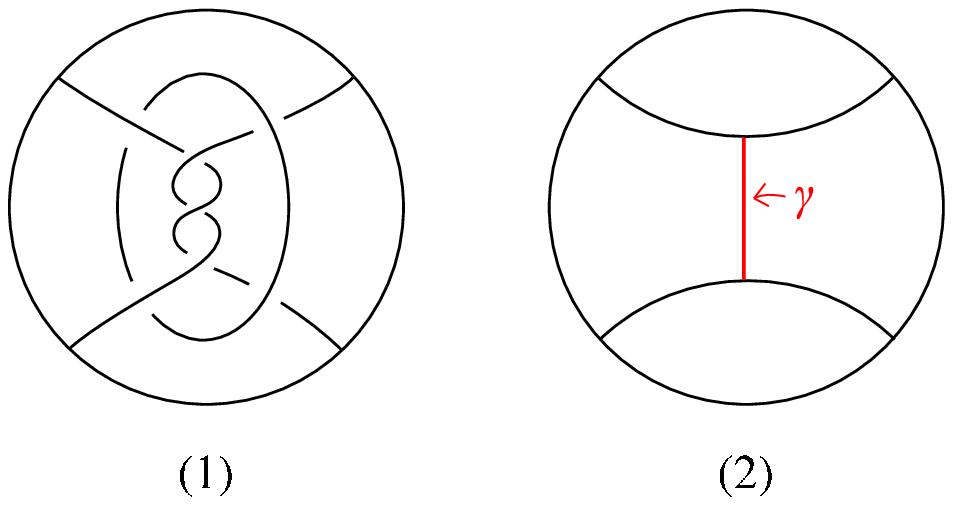}
\end{center}
\caption{}
\label{fig-gamma}
\end{figure}
%
Here, $K/\langle g_1\rangle$ is identified with the arc $\gamma$ in the figure.
Since the meridian of $K$ is identified with 
the horizontal loop of $M_2=M\ddot{o}(1/\alpha_0)$,
the lens space $P^2(0;1/\alpha_0)$ 
is the double branched covering of $S^3$ 
branched over the link $L'$ in Figure \ref{fig-p}
and the image of $K$ by the covering projection is the arc $\gamma$
in Figure \ref{fig-p}.
Recall that $F\cap M_2$ is a 2-holed torus 
which gives a 1-bridge decomposition of $K$
and that $\tau=\tau_F$ preserves $M_2$ and $F$ (cf. Figure \ref{fig-hs} (F1)).
Thus $F\cap M_2$ projects to a surface, say $\check{P}$, 
in $M_2/\langle g_1\rangle$
such that $\partial\check{P}$ is a simple loop 
on $\partial(N(K)/\langle g_1\rangle)$ of \lq\lq slope\rq\rq\  0.
Thus $\partial\check{P}$ bounds a disk in $N(K)/\langle g_1\rangle$
intersecting $\gamma$ transversely in a single point.
Let $P$ be the union of $\check{P}$ and the disk.
Then we have the following lemma, which we prove in Section \ref{sec-pf-p0}.
\begin{lemma}\label{lem-p0}
Under the above setting, 
$P$ is isotopic to the surface $P_0$ in Figure \ref{fig-p} 
by an isotopy of $(S^3, L')$ preserving $\gamma$.
%
\begin{figure}
\begin{center}
\includegraphics*[width=4.2cm]{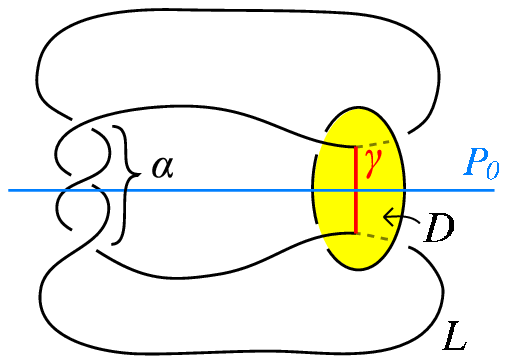}
\end{center}
\caption{}
\label{fig-p}
\end{figure}
%
\end{lemma}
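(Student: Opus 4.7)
My plan is to lift to the double branched cover and reduce the lemma to a uniqueness statement for 1-bridge tori of $K$ in the lens space $L_0 := P^2(0;1/\alpha_0)$. Under the branched covering $\pi : L_0 \to S^3$ branched over $L'$, the surface $\check{P}$ lifts to the 2-holed torus $F \cap M_2 \subset M_2 = E(K)$, and the capping disk in $N(K)/\langle g_1\rangle$ (which meets $\gamma$ once) lifts to a $g_1$-equivariant annulus $A \subset N(K)$ joining the two boundary components of $F \cap M_2$ and meeting $K$ transversely in two points. Hence $\tilde{P} := (F \cap M_2) \cup A$ is a closed torus in $L_0$ realizing $K$ as a 1-bridge knot, i.e., a 1-bridge torus for $K$. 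Applying the same construction to $P_0$ produces a standard 1-bridge torus $\tilde{P}_0$ of $K$, so the lemma reduces to showing that $\tilde{P}$ and $\tilde{P}_0$ are isotopic via a $g_1$-equivariant ambient isotopy of $(L_0, K)$.

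Next I would invoke uniqueness of the genus-2 Heegaard surface of $M_2$. Since $M_2 \cong M\ddot{o}(1/\alpha_0)$ is a Haken Seifert fibered space with connected boundary, the classification of its genus-2 Heegaard surfaces --- the same input used in \cite[Theorem 5]{Jan2} via the results of Kobayashi \cite{Kob} and Morimoto \cite{Mor} --- shows that the 2-holed torus $F \cap M_2$ is isotopic in $M_2$ to the 2-holed torus lifting $P_0$ by an ambient isotopy preserving $\partial M_2$ setwise. The isotopy can then be extended across $N(K)$ so that $A$ is carried to the corresponding annulus $A_0$, since both $A$ and $A_0$ are standard meridional annuli of $N(K)$ meeting $K$ in the two prescribed points. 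This yields an ambient isotopy of $(L_0, K)$ taking $\tilde{P}$ to $\tilde{P}_0$.

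Finally I would render this ambient isotopy $g_1$-equivariant, following the Birman--Hilden-type scheme used in the proof of Theorem \ref{prop-hs-3b-0}: the involution $g_1$ on the pair $(L_0, K)$ is geometrically rigid, with quotient the arc-in-solid-torus pair of Figure \ref{fig-gamma}, so the isotopy $\tilde{P} \to \tilde{P}_0$ can be replaced by one commuting with $g_1$. Projecting the equivariant isotopy through $\pi$ gives the required isotopy of $(S^3, L')$ preserving $\gamma$ and carrying $P$ to $P_0$. The hard step is this final one: converting a non-equivariant ambient isotopy in $(L_0, K)$ into a $g_1$-equivariant one. I expect to handle it by an equivariant cut-and-paste argument on the intersections of $\varphi_t(\tilde{P})$ with $g_1\varphi_t(\tilde{P})$, together with the Birman--Hilden equivariant homeomorphism extension, reducing to the invariant case; this is the technical heart of the proof, while steps one and two are essentially bookkeeping on the structure already established in Case \ref{casecase2}(a).
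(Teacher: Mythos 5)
Your strategy---lift to the double branched cover $P^2(0;1/\alpha_0)$ and prove uniqueness of the 1-bridge torus of $K$ equivariantly---is a legitimate reformulation of the lemma, and it is genuinely different from what the paper does: the paper never leaves the quotient, characterizes $P$ as a 2-bridge sphere of the 2-bridge link $L'$ meeting the arc $\gamma$ in one point (condition (P0$'$)), and proves uniqueness up to a $\gamma$-preserving isotopy by an innermost-circle analysis of $P\cap D$ for a spanning disk $D$ of one component of $L'$ containing $\gamma$, followed by a sweep-out (height-function and saddle-point) argument that reduces the remaining intersection pattern to the good case. The problem is that both of the load-bearing steps in your version are unsupported. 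The uniqueness you invoke in step two does not follow from the cited sources: Kobayashi's and Morimoto's results (and \cite[Theorem 5]{Jan2}) classify genus-2 Heegaard surfaces of the closed manifold $M$ up to isotopy in $M$; they say nothing about 2-holed tori in $M_2=E(K)$ up to isotopy preserving $\partial M_2$, nor about 1-bridge tori of a knot in a lens space. The statement you need---that $K$ admits a unique 1-bridge torus up to isotopy of the pair $(P^2(0;1/\alpha_0),K)$---is precisely the lift of Lemma \ref{lem-p0} itself, so as written the argument is close to circular. (A smaller slip: the cap over $\partial\check{P}$ is disjoint from the branch locus $L'$, so it lifts to two meridian disks of $N(K)$, each meeting $K$ once, not to an annulus meeting $K$ twice; gluing an annulus to the 2-holed torus would produce a genus-2 surface, not the 1-bridge torus.)

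The equivariance step, which you correctly flag as the technical heart, is also only gestured at. The Birman--Hilden equivariance used in the proof of Theorem \ref{prop-hs-3b-0} applies to genus-2 Heegaard surfaces of a closed 3-manifold together with the hyper-elliptic involution of the splitting; transplanting it to the pair $(P^2(0;1/\alpha_0),K)$ with the involution $g_1$ and a surface that is not a Heegaard surface of that pair is not a routine citation, and the proposed ``equivariant cut-and-paste on $\varphi_t(\tilde{P})\cap g_1\varphi_t(\tilde{P})$'' is a plan, not an argument. Carrying it out would cost at least as much work as the paper's direct analysis in $(S^3,L')$, which avoids the equivariance issue entirely by doing all the topology downstairs where $\gamma$ and $L'$ are visible.
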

Thus $\check{P}$ is isotopic to the disk in Figure \ref{fig-piece3-1} (1).
%
\begin{figure}
\begin{center}
\includegraphics*[width=8cm]{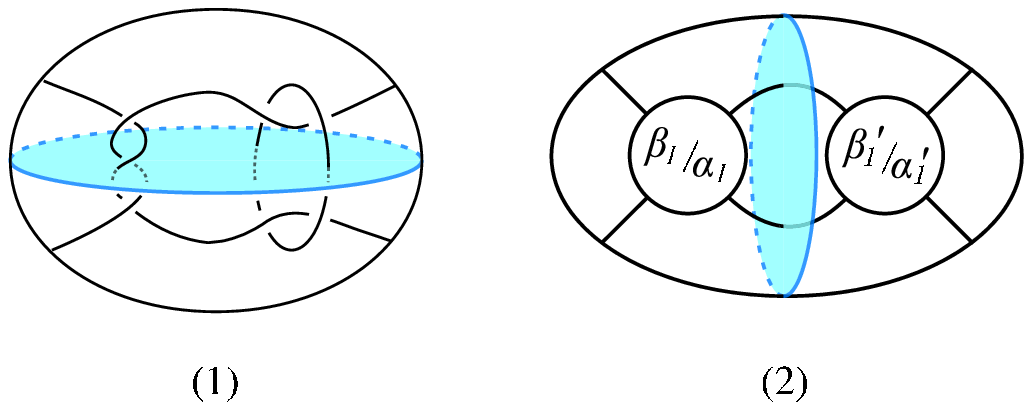}
\end{center}
\caption{}
\label{fig-piece3-1}
\end{figure}
%
On the other hand, we can see that $F\cap M_1$ projects to the disk in $(M_1, \fix \tau|_{M_1})/\langle \tau|_{M_1}\rangle$ as illustrated in Figure \ref{fig-piece3-1} (2).
Hence, by \cite[Lemma 7]{Jan2}, 
$S$ is isotopic to the 3-bridge sphere in Figure \ref{fig-3b_sphere2} 
and hence we obtain the desired result.
%
\begin{figure}
\begin{center}
\includegraphics*[width=8cm]{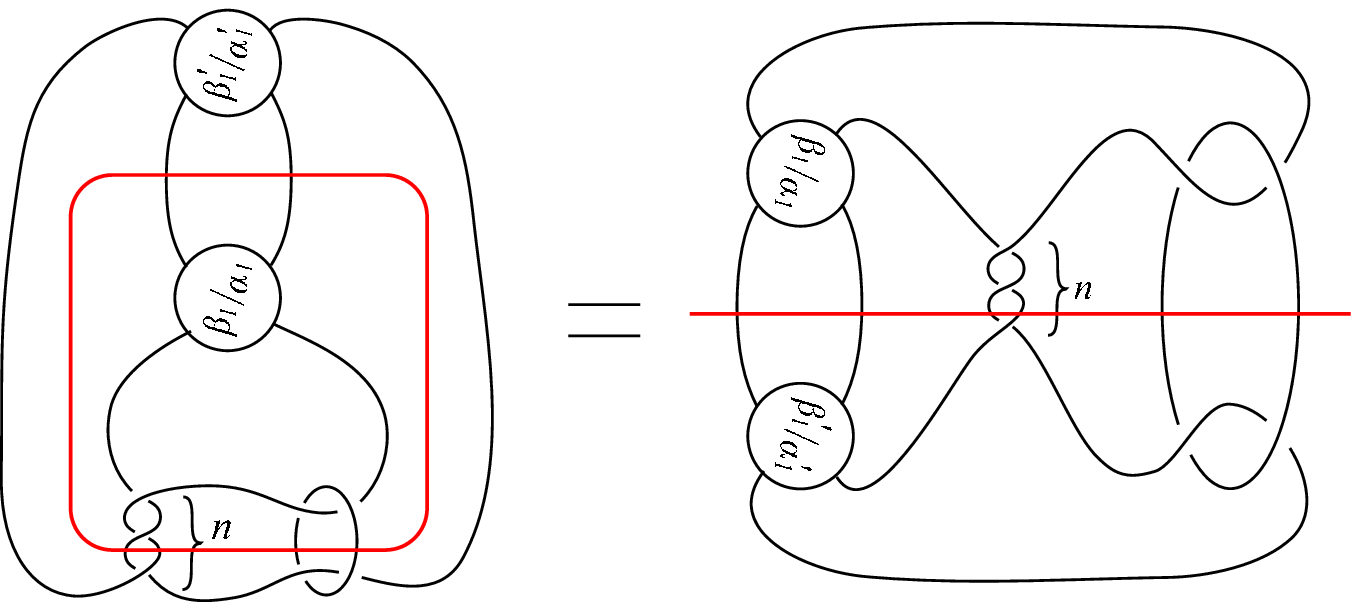}
\end{center}
\caption{}
\label{fig-3b_sphere2}
\end{figure}
%

Suppose that the condition (b) holds.
Let $F_0$ be the pre-image of the 3-bridge sphere in Figure \ref{fig-3bspheres} (4).
Then, by the argument in Case 4 in \cite[Section 7]{Jan2},
the Heegaard surface $F$ obtained as the pre-image of the given bridge sphere $S$ is isotopic to $(D_{\mu}^{1/2})^n(F_0)$,
the surface obtained from $F_0$ by applying $n/2$-Dehn twist along $T=\partial M_1=\partial M_2$
in the direction of the regular fiber $\mu$ of $M_2$ 
(see \cite[Section 6]{Jan2} for the precise definition).
We may assume that $\tau_{F_0}$ is equal to the homeomorphism $G_1$ in \cite[Proposition 6 (3)]{Jan2}.
Then $\tau_F=D_{\mu}^n\tau_{F_0}$ by \cite[Lemma 5]{Jan2}.
Since $(D_{\mu})^n\neq 1$ whenever $n\neq 0$ by \cite[Lemma 3 (1)]{Jan2},
we see the identity $\tau_F=\tau_L(=\tau_{F_0})$
holds only when $n=0$.
Hence, by Theorem \ref{prop-hs-3b-0},
$S$ is isotopic to the 3-bridge sphere in Figure \ref{fig-3bspheres} (4).

\begin{casecase}\label{casecase3}
$L=L_3((\beta_1/\alpha_1, \beta_2/\alpha_2, \beta_3/\alpha_3),(1/2,-n/(2n+1)))\in\LL_3$.
\end{casecase}

By \cite[Proposition 7]{Jan2},
$M$ belongs to the family (M2-b) and $F$ satisfies the condition (F2) in \cite[Theorem 5]{Jan2}.
Namely,
\begin{itemize}
\item[(M2-b)] $M$ is obtained from
$M_1=D(\beta_1/\alpha_1, \beta_2/\alpha_2, \beta_3/\alpha_3)$
and $M_2=E(S(2n+1,1))\cong$ $D(1/2,-n/(2n+1))$
by gluing their boundary so that a horizontal loop and a regular fiber of $M_1$ are identified with a regular fiber and a horizontal loop of $M_2$, respectively, and
\item[(F2)] the intersection of the torus $T:=\partial M_1=\partial M_2$ 
and each handlebody bounded by $F$
consists of two essential annuli as illustrated in Figure \ref{fig-hs} (F2).
Moreover, 
\begin{itemize}
\item $M_1\cap F$ consists of two disjoint essential saturated annuli in $M_1$ 
which divide $M_1$ into three solid tori, and
\item the 2-bridge knot corresponding to $M_2$ is $S(2n+1,1)$, and
$M_2\cap F$ is a 2-bridge sphere.
\end{itemize}
\end{itemize}

By \cite[Theorem 4]{Mor}, a 2-bridge sphere of a 2-bridge knot $S(2n+1,1)$
is unique up to isotopy fixing the knot.
So, by \cite[Lemma 6 (2)]{Jan2},
the isotopy type of $F$ is uniquely determined by the isotopy type of $M_1\cap F$,
where the isotopy does not necessarily fix the boundary of $M_1$.

In order to determine if $\tau_F=\tau_L$,
we quickly recall some notations of certain subgroups of the mapping class groups of $M$ and $M_1$ introduced in \cite{Jan2}.
Let $\mcg(M_1)$ be the subgroup of the (orientation-preserving) mapping class group of $M_1$ 
which consists of the elements preserving each singular fiber of $M_1$.
(See \cite[Section 5]{Jan2} for more details.)
Let $\mcg(M)$ be the subgroup of the (orientation-preserving) mapping class group of $M$ 
which consists of the elements preserving each $M_i$ and each singular fiber of $M_i$ ($i=1,2$).
Throughout this paper, we do not distinguish between a self-homeomorphism and its isotopy class:
we denote them by the same symbol.

Note that $F\cap M_1$ is homeomorphic to 
one of the saturated annuli $G_1$, $G_2$ and $G_3$ 
obtained as the pre-images of the arcs in the base orbifold illustrated in Figure \ref{fig-inv4-1},
%
\begin{figure}
\begin{center}
\includegraphics*[width=11.5cm]{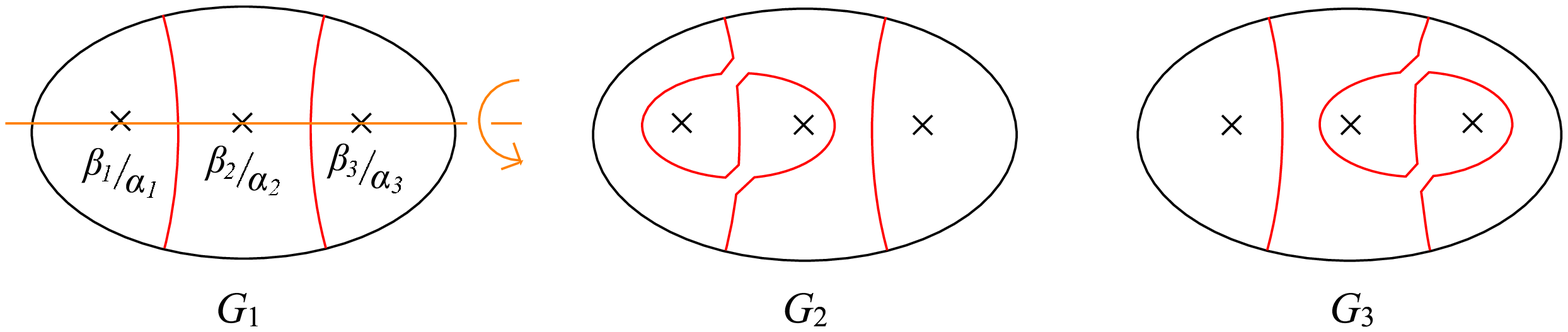}
\end{center}
\caption{}
\label{fig-inv4-1}
\end{figure}
%
To be precise, $F\cap M_1$ is isotopic to $f_1(G_i)$ for some $f_1\in\mcg(M_1)$ and for some $i=1,2,3$. 
(We may assume that $f_1|_{M_1}=id$.)
For each $i=1,2,3$,
let $F_i$ be a genus-2 Heegaard surface such that $F_i\cap M_1=G_i$ and $F_i\cap M_2$ is the 2-bridge sphere of $K$.
By \cite[Lemma 6 (2)]{Jan2}, 
any genus-2 Heegaard surface $F$ is isotopic to $f(F_i)$ 
for some integer $n$ and for some $i=1,2,3$ and for some homeomorphism $f\in\mcg_0(M)$ of $M$
which is obtained from some $f_1\in\mcg(M_1)$ by the rule $f|_{M_1}=f_1\in\mcg(M_1)$ and $f|_{M_2}=id$.
Here, $\mcg_0(M)$ denotes the subgroup of $\mcg(M)$ consisting of the elements whose restrictions to $M_2$ are the identity.

\begin{claim}\label{claim-compare}
$\tau_{f(F_i)}=\tau_{F_j}$ if and only if $i=j$ and $f=1$ in $\mcg(M)$.
\end{claim}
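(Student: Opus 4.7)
The ``if'' direction is immediate, since $f=1$ gives $f(F_i)=F_j$ and hence $\tau_{f(F_i)}=\tau_{F_j}$. For ``only if,'' I would use the standard transformation rule $\tau_{f(F_i)}=f\,\tau_{F_i}\,f^{-1}$ in $\mcg(M)$ and compare the restrictions of both sides of $\tau_{f(F_i)}=\tau_{F_j}$ to $M_1$ and $M_2$ separately. On the $M_2$-side, $f|_{M_2}=\id$ gives $\tau_{f(F_i)}|_{M_2}=\tau_{F_i}|_{M_2}$, and this automatically agrees with $\tau_{F_j}|_{M_2}$ because both act as the unique (by \cite[Theorem 4]{Mor}) hyper-elliptic involution of the 2-bridge sphere of $K=S(2n+1,1)$ in $M_2=E(K)$. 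So the hypothesis reduces to the relation
\[
f_1\,\tau_{F_i}|_{M_1}\,f_1^{-1}=\tau_{F_j}|_{M_1}
\]
in $\mcg(M_1)$, where $f_1:=f|_{M_1}$.

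To extract $i=j$, I would observe that the above conjugation relation carries the $\tau_{F_i}$-invariant essential saturated annulus $G_i\subset M_1$ to an annulus $\tau_{F_j}$-invariant and isotopic in $M_1$ to $G_j$; in particular $G_i$ and $G_j$ must lie in the same $\mcg(M_1)$-orbit. But $G_1$, $G_2$, $G_3$ are pre-images of three arcs in the base orbifold $D^2(\alpha_1,\alpha_2,\alpha_3)$ separating three distinct pairs of singular points, and elements of $\mcg(M_1)$ preserve each singular fiber by definition, so the three annuli lie in three distinct $\mcg(M_1)$-orbits. This forces $i=j$.

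Once $i=j$, $f_1$ centralizes $\tau_{F_i}|_{M_1}$ in $\mcg(M_1)$ and restricts to the identity on $\partial M_1$ (inherited from $f|_{M_2}=\id$). I would conclude by showing that these two constraints together force $f_1=1$ in $\mcg(M_1)$, and therefore $f=1$ in $\mcg(M)$: modulo its effect on the base orbifold, where the centralizer of the hyper-elliptic involution becomes trivial once the three singular points and $\partial D^2$ are pointwise fixed, the only remaining candidates for nontrivial $f_1$ are vertical Dehn twists along tori lying over arcs in the base, and these can be excluded by examining their effect on $G_i$ in light of the boundary constraint. I expect this last step to be the main obstacle, as it requires an explicit description of the centralizer of $\tau_{F_i}|_{M_1}$ inside the boundary-fixing subgroup of $\mcg(M_1)$ and relies essentially on the structural results for $\mcg(M_1)$ developed in Section~5 of \cite{Jan2}.
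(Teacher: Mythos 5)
Your outline (restrict the relation $\tau_{f(F_i)}=\tau_{F_j}$ to $M_1$, force $i=j$ by distinguishing the annuli $G_1,G_2,G_3$, then kill $f_1$ by a centralizer computation) is genuinely different from the paper's argument, which is purely algebraic: the paper writes $\tau_{F_i}=u\tau u^{-1}$ and $\tau_{F_j}=v\tau v^{-1}$ with $u,v\in\{1,x,y\}$ inside $\mcg(M)\cong\mcg(M_1)\cong(P_3/\langle (xy)^3\rangle)\rtimes\langle\tau\rangle$, invokes the computation $Z(\tau,\mcg(M))=\{1,\tau\}$ from \cite[Claim 1 (2)]{Jan2} to deduce $f=vu^{-1}$, and then checks in a small table that the only element $vu^{-1}$ lying in the pure braid group $P_3$ (as $f$ must, since it preserves each singular fiber) is the identity, which gives $f=1$ and $i=j$ simultaneously. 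Your proposal, however, has two genuine gaps.

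First, your derivation of $i=j$ does not work as stated. From $f_1\,\tau_{F_i}|_{M_1}\,f_1^{-1}=\tau_{F_j}|_{M_1}$ you correctly get that $f_1(G_i)$ is $\tau_{F_j}|_{M_1}$-invariant, but you then assert that it must be isotopic to $G_j$. Invariance under an involution does not determine the isotopy class of an invariant annulus --- an involution can leave invariant annuli in several isotopy classes --- and indeed the entire difficulty of the claim is that the involution does not remember the surface (this is exactly why $\Phi_L$ could a priori fail to be injective). So the step ``$f_1(G_i)$ is isotopic to $G_j$, hence $i=j$'' is circular in spirit and unjustified in substance. Second, the concluding step $f_1=1$ is where the real content lies, and you explicitly leave it open: you say it requires an explicit description of the centralizer of $\tau_{F_i}|_{M_1}$ and do not supply one. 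Your heuristic that this centralizer ``becomes trivial once the singular points and $\partial D^2$ are fixed'' is not self-evident (on a closed genus-2 surface the hyperelliptic involution is \emph{central}, so such centralizers are not generically small); the needed fact is the nontrivial computation $Z(\tau,\mcg(M))=\{1,\tau\}$ of \cite[Claim 1 (2)]{Jan2}, which is precisely what the paper quotes. As written, the proposal is an outline whose two decisive steps are, respectively, incorrect as argued and missing.
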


\begin{proof}
Put $\tau:=\tau_{F_1}$.
Then the hyper-elliptic involution $\tau_{F_i}$ associated with $F_i$ is 
$\tau$, $x\tau x^{-1}(=x^2\tau)$ and $y\tau y^{-1}(=y^2\tau)$
according as $i=1,2$ and $3$, respectively,
and $\tau_{f(F_i)}$ is $f\tau_{F_i}f^{-1}$.
Recall from \cite[Proof of Theorem 2 (3)]{Jan2} that
$$
\mcg(M)\cong \mcg(M_1)\cong (P_3/\langle (xy)^3\rangle)\rtimes \langle \tau\rangle
< (B_3/\langle (xy)^3\rangle)\rtimes \langle \tau\rangle,
$$
where $P_3$ and $B_3$ are the pure 3-braid group and the 3-braid group, respectively.
Recall from \cite[Claim 1 (2)]{Jan2} that the centralizer $Z(\tau,\mcg(M))$ of $\tau$ is $\{1,\tau\}\cong\Z_2$.
Hence, an element $f\in\mcg_0(M)\subset\mcg(M)$ commutes with $\tau$
only if $f=1$.

Assume that the hyper-elliptic involution $\tau_{f(F_i)}$ 
associated with $f(F_i)$
coincides with $\tau_{F_j}$
for some $i, j \in\{1,2,3\}$.
Since the involutions $\tau_{f(F_i)}$ and $\tau_{F_j}$ are given by
$f(u\tau u^{-1})f^{-1}$ and $v\tau v^{-1}$, respectively,
for some $u,v \in\{1, x, y \}$,
we have 
$$
f(u\tau u^{-1})f^{-1}=v\tau v^{-1},
$$
Thus $v^{-1}fu\in \mcg_0(M)$ commutes with $\tau$ in $\mcg(M)$.
Hence, $f=vu^{-1}$.
Note that the element $vu^{-1}$ is as in Table \ref{table2},
and the only element which belongs to $P_3$ among them is $1$
since any other element changes the order of singular points.
Hence, $f$ must be $1$ and we also have $i=j$.
\begin{table}
\renewcommand{\arraystretch}{1.1} 
\begin{center}
\begin{tabular}{|c||c|c|c|}
\hline
{\backslashbox{$u$}{$v$}} & {$1$} & {$x$} & {$y$} \\ \hline\hline
{$1$} & {$1$} & {$a^2b$} & {$ba^2$} \\ \hline
{$x$} & {$ba$} & {$1$} & {$ba^2ba$} \\ \hline
{$y$} & {$ab$} & {$a^2bab$} & {$1$} \\ \hline
\end{tabular}
\caption{}
\label{table2}
\end{center}
\end{table}
\end{proof} 
\vspace{2mm}

Hence, 
$M$ admits a unique genus-2 Heegaard surface
whose hyper-elliptic involution is strongly equivalent to $\tau_L$.
By Theorem \ref{prop-hs-3b-0}, this implies that
the 3-bridge sphere in Figure \ref{fig-3bspheres} (5)
is the unique 3-bridge sphere of $L$.
%


\section{Proof of Lemma \ref{lem-p0}}\label{sec-pf-p0}

Let $L'$ be the 3-bridge link and $P$ a 3-bridge sphere as in Lemma \ref{lem-p0}.
Then $P$ satisfies the following condition (P0).
\begin{itemize}
\item[(P0)] the pre-image of $P$ 
is a 1-bridge torus of $K$.
\end{itemize}
This condition is equivalent to the following condition (see \cite[Theorem 1.2]{Mor3}).
\begin{itemize}
\item[(P0$'$)] $P$ is a 2-bridge sphere of $L'$,
i.e., $P$ divides $(S^3,L)$ into two 2-string trivial tangles,
$(B^3_1,t_1)$ and $(B^3_2,t_2)$, and moreover,
$(B^3_i,t_i,\gamma\cap B^3_i)$ is as illustrated in Figure \ref{fig-gamma1}
for $i=1,2$.
%
\begin{figure}
\begin{center}
\includegraphics*[width=2.5cm]{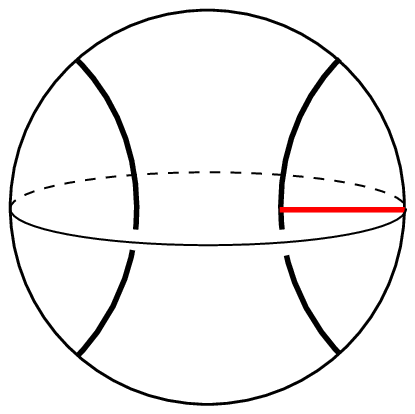}
\end{center}
\caption{}
\label{fig-gamma1}
\end{figure}
%
\end{itemize}
Let $D$ be the disk bounded by a component of $L'$ 
containing $\gamma$ in it as illustrated in Figure \ref{fig-p}.
Let $K_1$ be the boundary of $D$ and let $K_2$ be the other component of $L'$.
Since $P$ meets each $K_i$ in two points and
since $P$ meets $\gamma$ in a single point,
one of the following conditions holds. 
\begin{itemize}
\item[(i)] $D\cap P$ contains an arc $\delta_1$ properly embedded in $D$
which intersects $\gamma$ transversely in a single point (see Figure \ref{fig-intersection-1} (i)), or
\item[(ii)] $D\cap P$ contains an arc $\delta_2$ and a loop $\delta_3$,
such that $\delta_2$ is disjoint from $\gamma$ and that $\delta_3$ intersects $\gamma$ transversely in a single point 
(see Figure \ref{fig-intersection-1} (ii)).
\end{itemize}
%
\begin{figure}
\begin{center}
\includegraphics*[width=5.5cm]{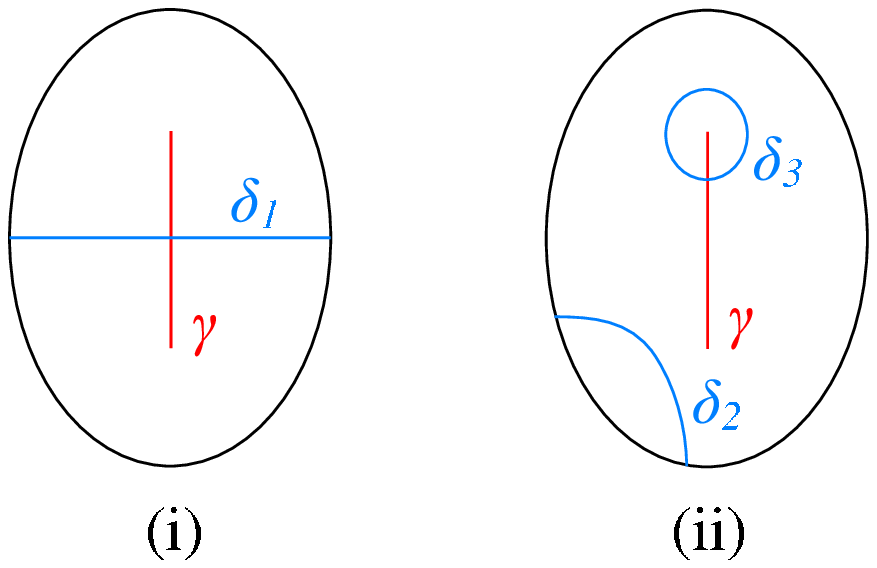}
\end{center}
\caption{}
\label{fig-intersection-1}
\end{figure}
%

{\bf Case\,(i)} The condition (i) holds.\\[-3mm]

Suppose $D\cap P$ contains a component, $c$, other than $\delta_1$.
Then $c$ is a loop in $D\setminus (\gamma\cup\delta_1)$ 
and hence it bounds a disk, $d_c$, in $D\setminus (\gamma\cup\delta_1)$.
We may assume $c$ is innermost, i.e., $\interior(d_c)\cap P=\emptyset$.
The loop $c$ bounds a disk, $d_c'$, in $P$ such that $|d_c'\cap L'|\leq 2$.
If $|d_c'\cap L'|=0$, then the 2-sphere $d_c'\cup d_c$ bounds a 3-ball in $S^3\setminus L'$.
Thus $P$ can be isotoped so that $c$ is removed from $D\cap P$.
By repeating this deformation, we may assume that 
$D\cap P$ does not contain a loop bounding a disk in $P\setminus L'$.
If $|d_c'\cap L'|=1$, then the 2-sphere $d_c'\cup d_c$ intersects $L'$ in one point, a contradiction.
If $|d_c'\cap L'|=2$, then $c$ is isotopic in $P\setminus L'$ 
to the boundary of a regular neighborhood of $\delta_1$ in $P$,
because the disk $d_c'$ is disjoint from the arc $\delta_1\subset P$. 
This loop represents the commutator $aba^{-1}b^{-1}$
of the two generators $a$ and $b$ of the (2-bridge) link group of $L'$,
where $a$ and $b$ are represented by the meridians of $K_1$ and $K_2$ 
as in Figure \ref{fig-gen}. 
%
\begin{figure}
\begin{center}
\includegraphics*[width=4.3cm]{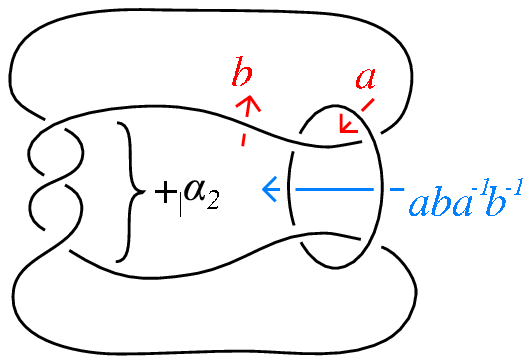}
\end{center}
\caption{}
\label{fig-gen}
\end{figure}
%
Since the loop bounds the disk $d_c$ in $S^3\setminus L'$, 
we have $aba^{-1}b^{-1}=1$.
This implies that 
the link group is a commutative group,
which is a contradiction.

Hence, we have $P\cap D=\delta_1$. 

Cut $S^3$ along $D$ and close it with two copies of $D$.
Then we have a rational tangle of slope $\pm 1/\alpha$
and the image of $P$ is a disk whose boundary has slope $0$.
Since such a disk is unique up to isotopy fixing the boundary of the tangle, 
$P$ can be isotoped to the 2-sphere $P_0$ in Figure \ref{fig-p} 
by an isotopy fixing $\gamma$.
\vspace{2mm}

{\bf Case\,(ii)} The condition (ii) holds.
\vspace{2mm}

Suppose $D\cap P$ contains a component, $c$, other than $\delta_2\cup \delta_3$.
By an argument similar to that in the previous case, 
we may assume that $c$ dos not bound a disk in $D\setminus (\gamma\cup\delta_2\cup\delta_3)$.
Then $c$ is a separating loop in $D\setminus \gamma$.
Since $c$ is isotopic to $K_1$ in $S^3\setminus K_2$, 
the union $c\cup K_2$ is equivalent to the nontrivial 2-bridge link $L'$.
On the other hand, since the linking number of $c$ and $K_2$ is even,
$c$ bounds a disk in $P\setminus L'$ or 
separates $P\cap K_1$ and $P\cap K_2$. 
In the former case, $c$ bounds a disk in $S^3\setminus L'$, 
which contradicts the fact that $c\cup K_2$ is a nontrivial 2-bridge link.
In the latter case, $c$ is isotopic in $P\setminus L'$
to the boundary of a regular neighborhood of $\delta_2$ in $P$.
Since $\delta_2$ bounds a disk in $D$ with an arc on $\partial D$, 
we see that $c$ is null-homotopic in $S^3\setminus L'$, a contradiction.
Hence, we have $P\cap D=\delta_2\cup \delta_3$. 

Since $P$ satisfies the condition (P0$'$),
there is a height function $h:S^3\rightarrow [-1,1]$ 
such that 
$P_t:=h^{-1}(t)$ satisfies the condition (P0$'$) when $-1<t<1$,
and that $P_{\pm 1}$ is an arc meeting $K_i$ $(i=1,2)$ in a single point,
where $K_2\cap \gamma=K_2\cap (P_{+1}\cup P_{-1})$ (see Figure \ref{fig-height}).
%
\begin{figure}
\begin{center}
\includegraphics*[width=3.5cm]{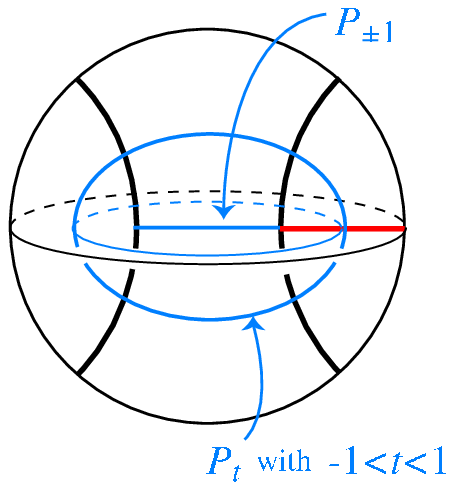}
\end{center}
\caption{}
\label{fig-height}
\end{figure}
%
Moreover, we may assume that $P_{0}=P$
and that the restriction $g:=h|_{D}$ of $h$ to $D$ has 
at most one non-degenerate singular point
at every level.
Thus, for every singular value $t_0$, 
$g^{-1}({t_0})$ contains a maximal point, a minimal point or a saddle point.
We represent each saddle point in $g^{-1}({t_0})$ by an arc
with endpoints on $g^{-1}({t_0-\varepsilon})$ 
for sufficiently small $\varepsilon >0$, as in Figure \ref{fig-saddle}.
%
\begin{figure}
\begin{center}
\includegraphics*[width=7.2cm]{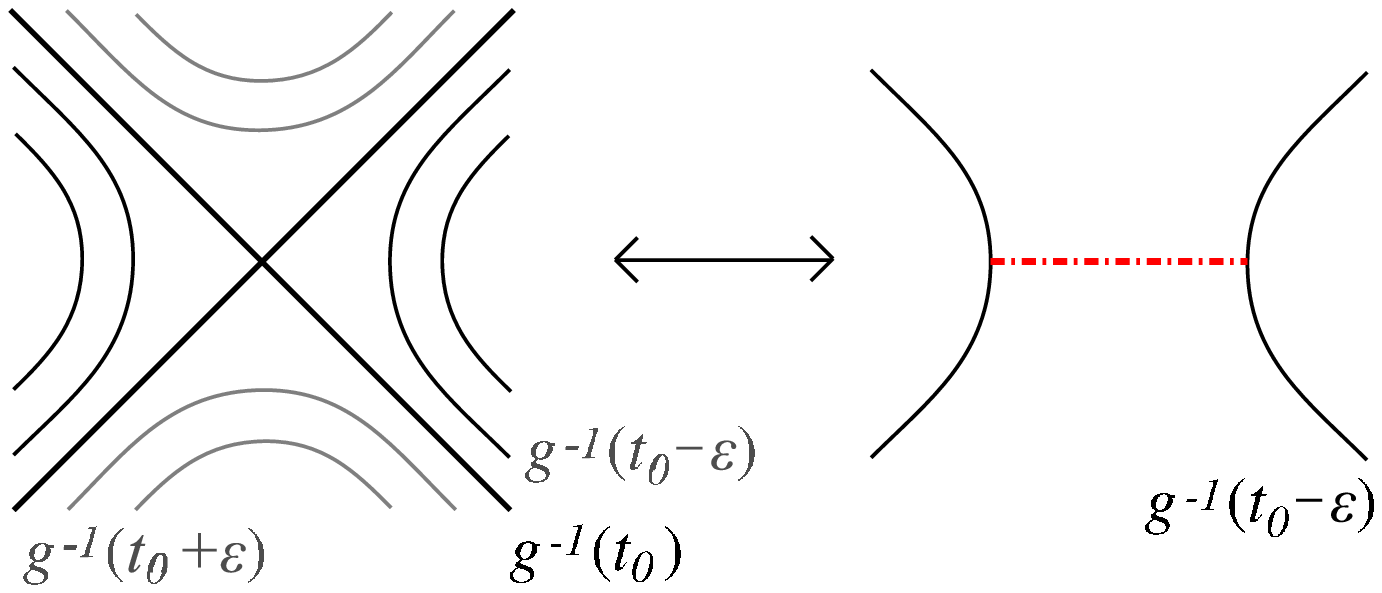}
\end{center}
\caption{}
\label{fig-saddle}
\end{figure}
%
\begin{lemma}\label{lem-regular}
Let $t$ be a regular value of $g(=h|_{D})$. 
Then $g^{-1}(t)$ does not contain a loop separating $\partial D$ and $\gamma$ in $D$.
\end{lemma}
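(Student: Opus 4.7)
The plan is to argue by contradiction. Suppose that at some regular value $t$ of $g$, the level set $g^{-1}(t)$ contains a loop $c$ separating $\partial D$ from $\gamma$ in $D$; let $D_c\subset D$ be the subdisk bounded by $c$ with $\gamma\subset D_c$, and let $A=D\setminus \interior D_c$ be the complementary annulus, so $\partial A=c\cup \partial D$. Among all such separating loops at all regular values of $g$, I would pick $c$ innermost, i.e., so that $D_c$ is minimal with respect to inclusion, in order to run a later replacement argument.

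First, $c$ is disjoint from $L'$: since $c\subset \interior D\setminus \gamma$ and $L'\cap D\subset \partial D\cup \gamma$ (because $K_1=\partial D$ and $K_2\cap D=\partial \gamma\subset \gamma$), we get $c\cap L'=\emptyset$. Similarly $A$ is disjoint from $K_2$ since $\partial \gamma\subset D_c$, so $A$ exhibits an isotopy in $S^3\setminus K_2$ between $c$ and $K_1$; in particular $\mathrm{lk}(c,K_2)=\mathrm{lk}(K_1,K_2)$. Now view $c$ as a simple closed curve on the $4$-punctured sphere $P_t\setminus L'$, and note that the two $K_1$-punctures lie on $\partial D$ (hence outside $D_c$), while the two $K_2$-punctures lie off of $D$ for generic $t$. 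There are three possibilities: (a) $c$ bounds a disk in $P_t\setminus L'$, (b) $c$ is peripheral to a single puncture, or (c) $c$ separates the four punctures as $2+2$.

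To rule out case (a), I would cap $c$ with $D_c$ and with the disk $E\subset P_t\setminus L'$ bounded by $c$, producing a $2$-sphere $\Sigma=D_c\cup E$ meeting $L'$ only in $\partial \gamma\subset K_2$; using the $1$-bridge structure of $K$ in the branched double cover of $(S^3,L')$---equivalently, the explicit picture of $\gamma$ prescribed by (P0$'$) in Figure~\ref{fig-gamma1}---$\Sigma$ bounds a $3$-ball cutting off a trivial arc of $K_2$, and pushing $P_t$ across this $3$-ball eliminates $c$ from $g^{-1}(t)$, contradicting the innermost choice. For case (b), the enclosed puncture cannot be a $K_1$-puncture (since those lie outside $D_c$), so $c$ is meridional to $K_2$, forcing $\mathrm{lk}(c,K_2)=\pm 1$; combined with $\mathrm{lk}(c,K_2)=\mathrm{lk}(K_1,K_2)$ and the $2$-bridge type of $L'$ in Figure~\ref{fig-p}, this is incompatible with the configuration of $\gamma$, a borderline case being handled by direct inspection. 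For case (c), since the two $K_1$-punctures sit on the same side of $c$, the $2+2$ partition must pair them together and pair the two $K_2$-punctures together; then the signed count of $E\cap K_2$ on the $K_2$-side yields $\mathrm{lk}(c,K_2)\in\{0,\pm 2\}$, whereas $\mathrm{lk}(K_1,K_2)$ for $L'$ of slope $\pm 1/\alpha_0$ is determined by the $2$-bridge type, so the linking identity restricts $\alpha_0$ drastically, and the remaining small cases are again excluded by direct inspection of Figure~\ref{fig-gamma1}.

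The main obstacle will be case (a): I have to verify that the $3$-ball bounded by $\Sigma$ really does allow an isotopy of $P_t$ that strictly simplifies $g^{-1}(t)$ and thus violates the innermost choice of $c$. This requires an innermost-disk analysis of $g^{-1}(t)\cap D_c$ together with an appeal---most naturally carried out in the lens space double branched cover, using the $1$-bridge torus structure of $K$---to confirm that the arc of $K_2$ cut off by $\Sigma$ is unknotted relative to the portion of $\gamma$ contained inside the $3$-ball.
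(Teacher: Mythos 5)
Your opening move coincides with the paper's: the annulus $A=D\setminus \mathrm{Int}\,D_c$ is disjoint from $K_2$ and from $\gamma$, so $c$ is isotopic to $K_1$ in $S^3\setminus K_2$ and $c\cup K_2$ is a copy of the nontrivial $2$-bridge link $L'$; one then analyzes $c$ as a curve on the $4$-punctured sphere $P_t\setminus L'$. After that, however, each of your three cases has a genuine problem. In case (a) the contradiction is immediate and you miss it: the disk $E\subset P_t\setminus L'$ is already an embedded disk in $S^3\setminus L'$ bounded by $c$, so $c\cup K_2$ is a split link, contradicting $c\cup K_2\cong L'$. Your alternative --- capping with $D_c$, extracting a $3$-ball, and pushing $P_t$ across it --- is both unnecessary and unjustified: $D_c\cup E$ need not be embedded (the interiors of $D_c\subset D$ and $E\subset P_t$ can meet in many further components of $g^{-1}(t)$), and an isotopy of $P_t$ destroys the level-set structure of $g$, so ``contradicting the innermost choice'' over all regular values does not close the argument. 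In case (b) the step ``the enclosed puncture cannot be a $K_1$-puncture since those lie outside $D_c$'' is a non sequitur: lying outside $D_c$ in the disk $D$ says nothing about which side of $c$ a puncture occupies on the sphere $P_t$. (Case (b) is in fact vacuous, but for a different reason: $\mathrm{lk}(c,K_1)=0$ and $\mathrm{lk}(c,K_2)$, computed from $D_c\cap K_2=\partial\gamma$, lies in $\{0,\pm 2\}$; both are even, so each side of $c$ in $P_t$ contains an even number of punctures of each component.)

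The decisive gap is case (c), where the whole content of the lemma lives. Your linking identity only yields $\mathrm{lk}(K_1,K_2)\in\{0,\pm 2\}$, which is not a contradiction --- it is perfectly consistent with the actual link $L'$ for small $\alpha_0$ --- and ``excluded by direct inspection of Figure \ref{fig-gamma1}'' is where the proof would actually have to happen. The paper closes this case by using the hypothesis that $P_t$ satisfies (P0$'$), which you never invoke beyond the puncture count: by the dichotomy (i)/(ii) established at the start of the proof of Lemma \ref{lem-p0}, $D\cap P_t$ contains either a properly embedded arc $\delta_1$ meeting $\gamma$ once (which any loop separating $\partial D$ from $\gamma$ would have to cross, impossible for disjoint components of $g^{-1}(t)$), or an arc $\delta_2$ with endpoints on $\partial D$ and disjoint from $\gamma$; in the latter case a loop separating the $K_1$-punctures from the $K_2$-punctures and disjoint from $\delta_2$ is forced to be parallel in $P_t\setminus L'$ to $\partial N(\delta_2)$, which bounds a disk in $S^3\setminus L'$, again contradicting $c\cup K_2\cong L'$. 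Without some such use of (P0$'$) the statement cannot be recovered from linking numbers alone, so your argument as written does not prove the lemma.
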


\begin{proof}
Recall that $P_t:=h^{-1}(t)$ satisfies the condition (P0$'$).
Hence, $D\cap P_t$ $(=g^{-1}(t))$ satisfies the condition (i) or (ii).
In the former case, 
$D\cap P_t$ does not contain a loop separating $\partial D$ and $\gamma$,
since $D\cap P_t$ contains a properly embedded arc in $D$ which intersects $\gamma$ in a single point.
In the latter case, 
we also see that $D\cap P_t$ does not contain a loop separating $\partial D$ and $\gamma$
by applying the argument at the beginning of Case (ii)
to the 3-bridge sphere $P_t$.
\end{proof}
\vspace{2mm}

Let $t_0$ be a singular value of $g$
and $\alpha$ an arc representing a saddle point in $g^{-1}({t_0})$.
Then the arc $\alpha$ is of one of the following three types 
(see Figure \ref{fig-type}):
%
\begin{figure}
\begin{center}
\includegraphics*[width=8cm]{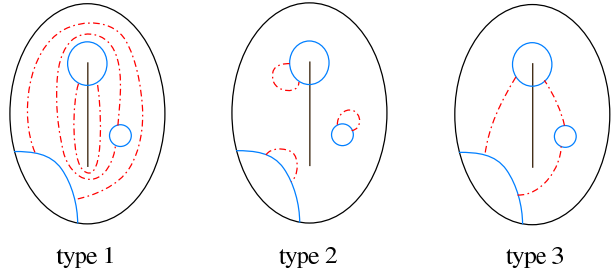}
\end{center}
\caption{The dashed lines give all possible types of an arc representing a saddle point of $g$.}
\label{fig-type}
\end{figure}
%
\begin{itemize}
\item
$\alpha$ is of {\it type 1} 
if its endpoints are on the same component of $g^{-1}({t_0-\varepsilon})$, 
and $g^{-1}({t_0+\varepsilon})$ contains a loop on $D$ 
which separates $\partial D$ and $\gamma$,
\item
$\alpha$ is of {\it type 2} 
if its endpoints are on the same component of $g^{-1}({t_0-\varepsilon})$, 
and $g^{-1}({t_0+\varepsilon})$ does not contain a loop on $D$ 
which separates $\partial D$ and $\gamma$,
and 
\item
$\alpha$ is of {\it type 3} 
if its endpoints are on different components of $g^{-1}({t_0-\varepsilon})$.
\end{itemize}

By Lemma \ref{lem-regular},
we see that an arc of type 1 does not exist.
Thus, any arc representing a saddle point of $P_{t_0}$
is of type 2 or of type 3.

Put $X_{s}:=g^{-1}([-1,s])$ for any $s\in [-1,1]$. 
Since $P(=P_{0})$ cuts $D$ into two disks and an annulus,
we may assume that $X_{0}$ is the union of the two disks, 
say $X_{0}^1$ and $X_{0}^2$.
Let $X_{s}^i$ ($s\in(0,1]$) be the component of $X_{s}$ 
which contains $X_{0}^i$ ($i=1,2$).
Since $X_1$ is connected, 
there exists a singular value $s_0\in (0,1)$ 
and a sufficiently small $\varepsilon>0$
such that
$X_{s_0-\varepsilon}^1\neq X_{s_0-\varepsilon}^2$ and
$X_{s_0+\varepsilon}^1 = X_{s_0+\varepsilon}^2$.
Then the arc representing the saddle point in $g^{-1}({s_0})$ 
connects $X_{s_0-\varepsilon}^1$ and $X_{s_0-\varepsilon}^2$.
Note that, at any singular point $s_0'(\neq s_0)$,
$X_{s_0'+\varepsilon}^1\cup X_{s_0'+\varepsilon}^2$ is homeomorphic to
$X_{s_0'-\varepsilon}^1\cup X_{s_0'-\varepsilon}^2$ with some open disks (possibly empty) in it removed.
Hence, $X_{s_0-\varepsilon}^i$ is homeomorphic to
$X_{0}^i$ $(i=1,2)$ with some open disks (possibly empty) in it removed.
Since the arc representing the saddle point at $t=s_0$
connects the outermost components of 
$\partial X_{s_0-\varepsilon}^1$ and $\partial X_{s_0-\varepsilon}^2$,
which are homeomorphic to 
$\partial X_{0}^1$ and $\partial X_{0}^2$, respectively,
$P_{s_0+\varepsilon}$ satisfies the condition (i)
for the previous case (see Figure \ref{fig-intersection-2}).
%
\begin{figure}
\begin{center}
\includegraphics*[width=9cm]{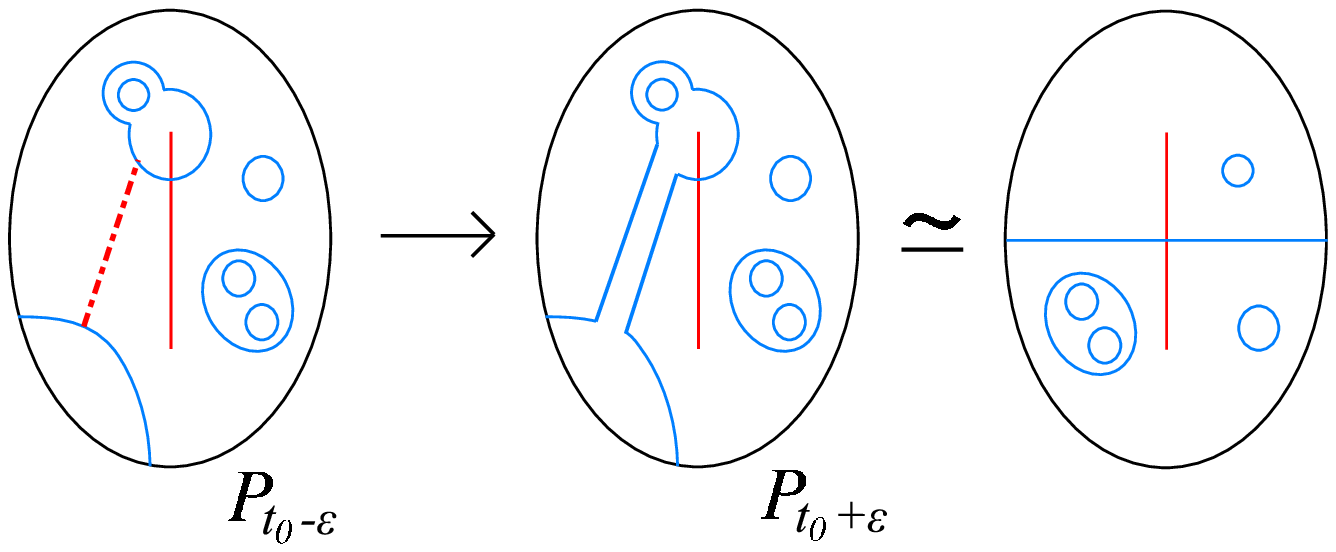}
\end{center}
\caption{}
\label{fig-intersection-2}
\end{figure}
%

Hence, by the result in Case (i), $P$ can be isotoped to a 2-sphere $P_0$ in Figure \ref{fig-p} 
by an isotopy of $(S^3, L')$ preserving $\gamma$.


\section{3-bridge spheres of the non-simple exceptional link}\label{sec-simple-exception}

In this section,
we show that the exceptional 3-bridge arborescent link $L$ 
in Figure \ref{fig-gen-mont} 
admits a unique 3-bridge sphere up to isotopy.

\begin{proposition}\label{lem-unique-sp}
Let $L$ be the link in Figure \ref{fig-gen-mont} 
for some nonzero integer $n$.
Then any 3-bridge sphere of $L$ is isotopic to the 3-bridge sphere $S_0$ 
in Figure \ref{fig-sp2}.
%
\begin{figure}
\begin{center}
\includegraphics*[width=3.8cm]{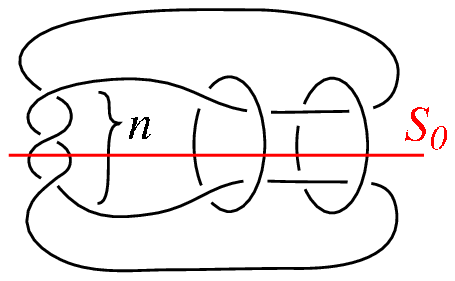}
\end{center}
\caption{}
\label{fig-sp2}
\end{figure}
%
\end{proposition}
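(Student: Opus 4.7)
The plan is to exploit the non-simplicity of $L$. As noted in the excerpt, $S^3\setminus L$ contains an essential torus; in fact the symmetric structure of $L$ in Figure \ref{fig-gen-mont} exhibits an obvious essential torus $T$ arising from the tangle-sum decomposition, separating one outer $(-1/2,1/2)$ piece from the remainder (and, by symmetry, there is a parallel torus $T'$ on the opposite side). I would fix such a $T$ and analyze how an arbitrary 3-bridge sphere $S$ must meet it.

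First I would put $S$ in general position with respect to $T$ and isotope $S$ through $(S^3, L)$ so as to minimize $|S\cap T|$. Standard innermost-disk arguments on the four-times punctured sphere $S\setminus L$, together with outermost-arc arguments on the twice-punctured torus $T\setminus L$, show that in minimal position every component of $S\cap T$ is essential in both surfaces; consequently $S\cap T$ is a disjoint union of parallel essential simple closed curves on $T$, each separating the four punctures of $S$ into two pairs of two. A short argument using the 3-bridge hypothesis shows $S\cap T\neq\emptyset$: otherwise $S$ would lie entirely on one side of $T$, forcing the non-trivial 2-bridge tangle on the other side to be contained in a 3-ball meeting $L$ in too few points, contradicting its non-triviality.

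Next I would classify the two pieces $S\cap U_1$ and $S\cap U_2$, where $U_1,U_2$ are the two sides of $T$. The side $U_1$ containing the outer $(-1/2,1/2)$ tangle is (up to radial collapse) the exterior of a 2-bridge tangle, and the restriction $S\cap U_1$ must be a planar essential surface realizing a 2-bridge decomposition of that tangle; by \cite[Theorem 4]{Mor} this piece is unique up to isotopy fixing $\partial U_1$. The complementary side $U_2$ contains the central $(1/n)$ tangle together with the opposite outer $(-1/2,1/2)$ piece; a symmetric minimization against the second essential torus $T'$ and the same uniqueness result determine $S\cap U_2$ piecewise, after which the central Seifert-fibered region between $T$ and $T'$ constrains $S$ to meet it in saturated essential annuli whose isotopy class is forced by the classification of essential annuli in the corresponding orbifold (as in the argument for Case 2(a) of Section \ref{sec-proof2}).

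Finally I would verify that the only way of reassembling these uniquely determined pieces consistently with $S$ being a sphere yields exactly $S_0$ in Figure \ref{fig-sp2}. The main obstacle is ruling out additional parallel circles in $S\cap T$ and non-trivial Dehn twisting along $T$ when the outer 2-bridge piece is glued to the saturated annuli on the central side; I expect this to be handled by a height-function and saddle-move analysis in the spirit of Section \ref{sec-pf-p0}, reducing multi-circle configurations to the single-circle case by isotopies across innermost bigon and trigon regions on $T$ and $S$.
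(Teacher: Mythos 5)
Your overall plan---decompose $L$ along a fixed auxiliary surface, minimize the intersection with $S$, and pin down each piece---is in the same spirit as the paper's argument, but the auxiliary surface you choose and the inputs you invoke do not work as stated, and the hardest step is deferred. First, there is a confusion of surfaces. An essential torus $T$ in $S^3\setminus L$ is disjoint from $L$, so $S\cap T$ consists only of closed curves and there is no ``twice-punctured torus $T\setminus L$'' on which to run outermost-arc arguments; the surface ``separating one outer $(-1/2,1/2)$ piece from the remainder'' in the arborescent decomposition is a Conway sphere (a $2$-sphere meeting $L$ in four points), not a torus. Also $S\setminus L$ is a \emph{six}-punctured sphere ($L$ has three components, each meeting $S$ twice), not a four-punctured one, so the claim that each curve of $S\cap T$ ``separates the four punctures of $S$ into two pairs'' is not meaningful as written. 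The paper instead takes as its auxiliary surface the $2$-sphere $P=A\cup D_1\cup D_2$ built from the annulus $A$ cobounded by the two parallel components and two disks meeting $K_3$, so that the complementary balls are rational tangles of slopes $0/1$ and $1/n$; it is this rational-tangle structure that ultimately forces $S$ onto $S_0$.

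Second, the uniqueness input you cite does not exist in the form you need. \cite[Theorem 4]{Mor} asserts uniqueness of the $2$-bridge sphere of the $2$-bridge link $S(2n+1,1)$ in $S^3$; it says nothing about uniqueness, \emph{rel boundary}, of bridge surfaces of a tangle inside a solid torus or a tangle exterior, which is what your argument requires for $S\cap U_1$. This relative uniqueness is precisely the content the paper has to establish by hand (Lemma \ref{lem-p0}, proved throughout Section \ref{sec-pf-p0} by a sweep-out and saddle analysis), so it cannot be quoted. Finally, the step you ``expect'' to work---ruling out extra parallel circles and nontrivial Dehn twisting along $T$ when reassembling the pieces---is the crux. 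In the non-exceptional cases the paper excludes such twisting by comparing hyper-elliptic involutions in the mapping class group of the double branched cover; that machinery is unavailable for exactly this link (which is why it is treated as an exceptional case), and your proposal offers no substitute. As it stands the argument establishes neither the standard position of $S\cap T$ nor the uniqueness of the glued-up sphere, so the proof is incomplete.
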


\begin{remark}
{\rm 
Recall from \cite[Proposition 4]{Jan2} that 
$L$ is equivalent to $L_2((-1/2,1/2),(1/n),$ $(-1/2,1/2))$ or 
$L_1((-1/2,1/2-n),(-1/2,1/2-n))$
according as $|n|>1$ or $|n|=1$.
Moreover, the 3-bridge sphere $S_0$ of $L$ is isotopic to 
the 3-bridge sphere of $L_2((-1/2,1/2),$ $(1/n),(-1/2,1/2))$
in Figure \ref{fig-3bspheres} (4) when $|n|>1$,
and isotopic to the 3-bridge sphere $S_1(=S_2)$
of $L_1((-1/2,1/2-n),(-1/2,1/2-n))$ 
in Figure \ref{fig-3bspheres} (1) or (2) when $|n|=1$.
}
\end{remark}

{\sc Proof of Proposition \ref{lem-unique-sp}.}
Let $L$ be the link in Figure \ref{fig-gen-mont}
and $S$ a 3-bridge sphere of $L$.
Let $K_1$ and $K_2$ be the two parallel components of $L$
and $K_3$ the other component.
Note that $K_1\cup K_2$ bounds an annulus, say $A$, in $S^3\setminus K_3$.
Let $D_1$ and $D_2$ be the disjoint disks in $S^3$
bounded by $K_1$ and $K_2$, respectively, 
such that $D_i\cap A=K_i$ and 
$D_i\cap K_3$ consists of two points for each $i=1,2$ 
as illustrated in Figure \ref{fig-sp5}.
%
\begin{figure}
\begin{center}
\includegraphics*[width=3.8cm]{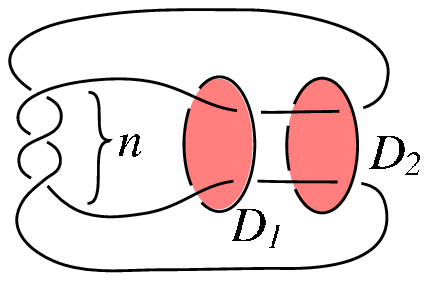}
\end{center}
\caption{}
\label{fig-sp5}
\end{figure}
%
Set $P:=A\cup D_1\cup D_2$. 
Then $P$ is a 2-sphere which contains $K_1\cup K_2$
and intersects $K_3$ in four points.
We may assume that $S$ intersects $P$ transversely.
Let $B_1$ and $B_2$ be the 3-balls in $S^3$ bounded by $P$, 
such that $(B_1, B_1\cap K_3)$ and $(B_2, B_2\cap K_3)$
are the tangles as illustrated in Figure \ref{fig-sp10} (1) and (2), respectively.
%
\begin{figure}
\begin{center}
\includegraphics*[width=7cm]{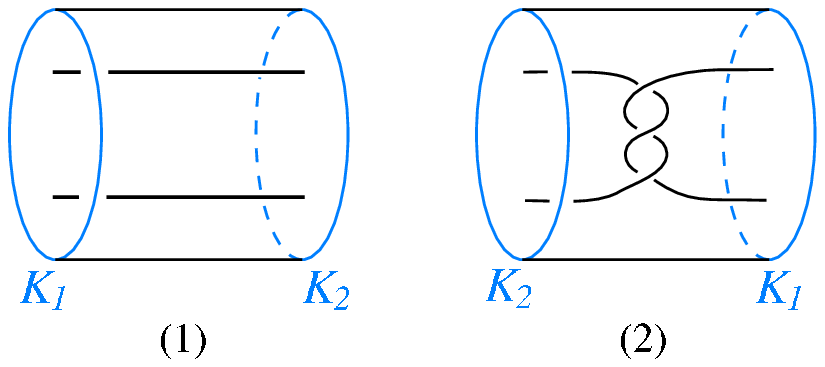}
\end{center}
\caption{}
\label{fig-sp10}
\end{figure}
%

Since $L$ consists of 3 components, 
$S$ intersects each component of $L$ in two points.
Hence, one of the following holds.
\begin{itemize}
\item[(A1)] $S\cap A$ contains properly embedded non-separating arcs $\delta_1$ and $\delta_2$ in $A$
as in Figure \ref{fig-sp3} (i), or
\item[(A2)] $S\cap A$ contains properly embedded separating arcs $\delta_3$ and $\delta_4$ in $A$
as in Figure \ref{fig-sp3} (ii).
\end{itemize}
%
\begin{figure}
\begin{center}
\includegraphics*[width=6.5cm]{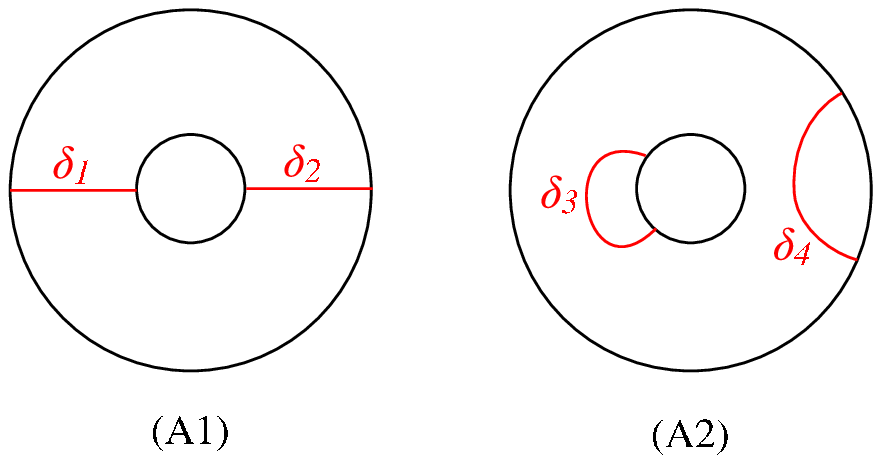}
\end{center}
\caption{}
\label{fig-sp3}
\end{figure}
%

On the other hand, 
$S\cap D_i$ $(i=1,2)$ satisfies one of the following conditions.
\begin{itemize}
\item[(D1)] $S\cap D_i$ contains an arc $\varepsilon_1^i$ 
properly embedded in $D_i$ which separates the two points $D_i\cap K_3$.
\item[(D2)] $S\cap D_i$ contains an arc $\varepsilon_2^i$ 
properly embedded in $D_i$ which is parallel to the boundary of $D_i$ 
in $D_i\setminus K_3$.
\end{itemize}

\begin{cacase}
Suppose that the condition (A1) holds.
\end{cacase}

\begin{subcacase}
Suppose that both $S\cap D_1$ and $S\cap D_2$ satisfy the condition (D1).
\end{subcacase}

Let $\gamma_1$ be the loop 
$\delta_1\cup \delta_2\cup \varepsilon_1^1\cup \varepsilon_1^2$.
Then $\gamma_1$ bounds two disks, say $\Delta_1$ and $\Delta_2$,
in $S$.
We can see that $\gamma_1$ is obtained from the loop $\gamma_0$ in Figure \ref{fig-sp9}
by applying (half) Dehn twists along the core loop of $A$.
%
\begin{figure}
\begin{center}
\includegraphics*[width=4cm]{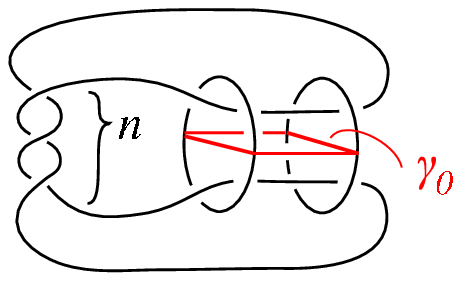}
\end{center}
\caption{}
\label{fig-sp9}
\end{figure}
%
Note that the linking number of $\gamma_1$ and $K_3$ is even,
which implies that each $\Delta_i$ intersects $K_3$ in an even number of points.
Since $S$ intersects $K_3$ in two points, 
one of $\Delta_1$ and $\Delta_2$, say $\Delta_1$, is disjoint from $K_3$
and the other meets $K_3$ in two points.

Suppose that $\interior(\Delta_1)\cap P\neq \emptyset$,
and pick a (loop) component, $c_0$, of $\interior(\Delta_1)\cap P$
which is innermost in $\Delta_1$.
Since $\Delta_1$ is disjoint from $K_3$,
the disk $d_1$ bounded by $c_0$ in $\Delta_1$ is also disjoint from $K_3$.
On the other hand, since $c_0$ is disjoint from $\gamma_1$,
$c_0$ bounds a disk, $d_2$ in $P$
intersecting $K_3$ in at most one point.
Hence, $d_2\cap K_3=\emptyset$.
Since $L$ is unsplittable, 
the 2-sphere $d_1\cup d_2$ bounds a 3-ball disjoint from $L$.
Thus we may remove the loop component $c_0$ by an isotopy.
By repeating this, we may assume that $\interior(\Delta_1)\cap P$ is empty.
Hence, $\Delta_1\subset B_1$ or $\Delta_1\subset B_2$. 

Recall that $(B_1,B_1\cap K_3)$ and $(B_2,B_2\cap K_3)$ are rational tangles
of \lq\lq slopes\rq\rq\  $0/1$ and $1/n$, respectively (see Figure \ref{fig-sp10}).
Since $\gamma_1$ is an essential loop on $P\setminus K_3$ 
which bounds a disk $\Delta_1\subset B_1\setminus K_3$,
this implies that $\gamma_1$ is isotopic to $\gamma_0$,
and $\Delta_1$ is isotopic to the disk as in Figure \ref{fig-sp12} (1). 
%
\begin{figure}
\begin{center}
\includegraphics*[width=7cm]{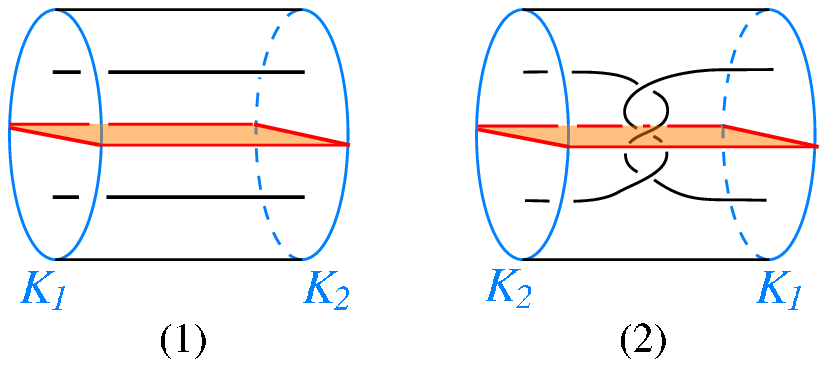}
\end{center}
\caption{}
\label{fig-sp12}
\end{figure}
%

Note that $\interior(\Delta_2)\cap L=\interior(\Delta_2)\cap K_3$ and it consists of two points.
Let $c$ be a component of $\interior(\Delta_2)\cap P$.
Then one of the following holds (see Figure \ref{fig-delta2} (1)).
%
\begin{figure}
\begin{center}
\includegraphics*[width=7cm]{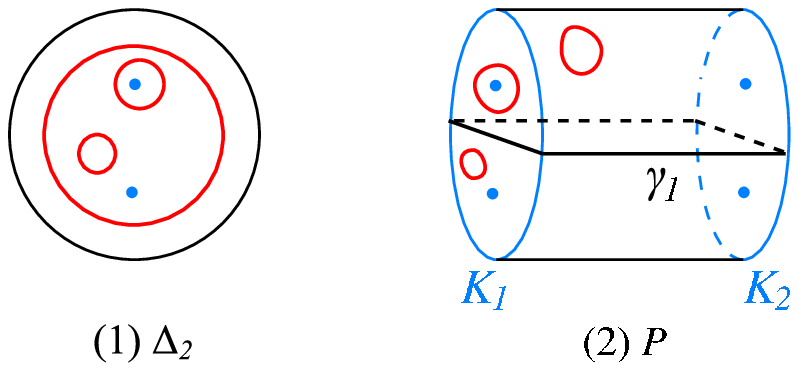}
\end{center}
\caption{}
\label{fig-delta2}
\end{figure}
%
\begin{itemize}
\item[(i)] $c$ bounds a disk in $\interior(\Delta_2)\setminus K_3$,
\item[(ii)] $c$ bounds a disk in $\interior(\Delta_2)$ which meets $K_3$ in a single point,
\item[(iii)] $c$ is parallel to $\gamma_1=\partial\Delta_2$ in $\Delta_2\setminus K_3$.
\end{itemize}
On the other hand, $c$ is disjoint from $K_1\cup K_2\cap \gamma_1$,
and hence bounds a disk in $P\setminus(K_1\cup K_2\cup \gamma_1)$
which meets $K_3$ in at most one point (see Figure \ref{fig-delta2} (2)).

Let $c_1$ be a loop satisfying the condition (i) which is innermost in $\Delta_2$.
Then $c_1$ must bound a disk also in $P\setminus(L\cup \gamma_1)$,
and hence we can eliminate $c_1$ from $\interior(\Delta_2)\cap P$
by using the 3-ball bounded by the union of the two disks bounded by $c_1$.
In this way, we can eliminate all loops satisfying the condition (i).

Let $c_1$ be a loop satisfying the condition (ii) which is innermost in $\Delta_2$.
Then $c_2$ bounds a disk in $\interior(\Delta_2)$ which meets $K_3$ in a single point,
and hence it also bounds a disk in $P\setminus(K_1\cup K_2\cup \gamma_1)$
which meets $K_3$ in one point.
The union of the two disks is a 2-sphere in $S^3$ which meets $L$ in two points.
Since $L$ is prime, the 2-sphere bounds a 3-ball in $S^3$ which meets $L$ in a trivially embedded arc.
Hence, we can eliminate $c_2$ from $\interior(\Delta_2)\cap P$, and
we can eliminate all loops satisfying the condition (ii) similarly.

Let $c_1$ be a loop satisfying the condition (iii).
Then $c_3$ is homotopic to the loop $\gamma_1'$ in $S^3\setminus L$
as illustrated in Figure \ref{fig-c2}.
%
\begin{figure}
\begin{center}
\includegraphics*[width=3.7cm]{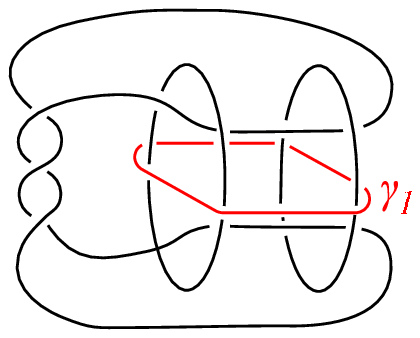}
\end{center}
\caption{}
\label{fig-c2}
\end{figure}
%
By an argument similar to that in Case (ii) of the proof of Lemma \ref{lem-p0},
we can see that $c_3$ is not null-homotopic in $S^3\setminus L$.
On the other hand, $c_3$ bounds a disk in $P\setminus L(\subset S^3\setminus L)$, a contradiction.
Hence, we may assume that $\interior(\Delta_2)\cap P$ is empty,
that is, $\Delta_2\subset B_2$.
Since $\Delta_2$ meets $K_3$ in two points,
we see that $\Delta_2$ is isotopic to the disk as in Figure \ref{fig-sp12} (2).

Therefore, $S$ is isotopic to $S_0$ in Figure \ref{fig-sp2}.

\begin{subcacase}
Suppose that $S\cap D_1$ and $S\cap D_2$ satisfy (D1) and (D2), respectively.
\end{subcacase}

By an argument similar to that in the previous case, 
together with the following sublemma, 
we can see that $S\cap P$ is isotopic to the loop $\gamma_3$
as in Figure \ref{fig-sp13}
and that $S$ can be obtained by gluing the two disks 
in Figure \ref{fig-sp13} (1) and (2).
\begin{sublemma}\label{sublemma}
The intersection $S\cap P$ does not contain a loop parallel to $K_1$ (or $K_2$) in $P\setminus L$.
\end{sublemma}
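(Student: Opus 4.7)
The plan is to argue by contradiction, in the spirit of Case (ii) in the proof of Lemma~\ref{lem-p0}. Suppose that a component $c \subset S \cap P$ is parallel to $K_1$ in $P \setminus L$; the case of $K_2$ is symmetric. Let $A_c \subset P$ denote the annulus cobounded by $c$ and $K_1$ with $A_c \cap L = K_1$, and let $K_1' \subset \interior(A_c)$ be a slight push-off of $K_1$ inside $P$. Then $c$ is freely homotopic in $S^3 \setminus L$ to $K_1'$, and since the disk $D_1 \subset P$ bounded by $K_1$ is disjoint from $K_1'$, the curve $K_1'$ is a zero-framed longitude of $K_1$.

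The first step is to show that $K_1'$ represents a nontrivial element of $\pi_1(S^3 \setminus L)$. If it were null-homotopic, the loop theorem would produce an embedded disk in $S^3 \setminus L \subset S^3 \setminus (K_2 \cup K_3)$ with boundary $K_1'$, forcing $K_1$ to bound an embedded disk disjoint from $K_2 \cup K_3$ and contradicting the unsplittability of $L$ established at the beginning of Section~\ref{sec-proof2}. Hence $c$ itself is not null-homotopic in $S^3 \setminus L$.

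Next, I would analyze how $c$ divides $S$ into two disks $d^+$ and $d^-$ partitioning the six points of $S \cap L$. Using the fact that either of the two disks of $P$ cut off by $c$ meets each component of $L$ in a controlled way (the $2$ points of $D_1 \cap K_3$ carry opposite algebraic signs, and likewise for $D_2 \cap K_3$, as can be read off from Figure~\ref{fig-sp10}), one computes $[c] = 0$ in $H_1(S^3 \setminus L)$. Equating this vanishing with the signed meridian count coming from $d^\pm \cap L$ shows that $d^\pm \cap K_i$ consists of $0$ or $2$ points for each $i = 1, 2, 3$.

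Finally, I would case-split over the possible partitions. When $d^+$ or $d^-$ is disjoint from $L$, the contradiction is immediate from step one. In the remaining cases I would form a $2$-sphere $\Sigma$ by gluing one of $d^\pm$ to an appropriate disk of $P$ bounded by $c$, perturbed off any link component lying on $P$ when necessary, and use the primality of $L$ together with innermost-loop simplifications on $S \cap P$ analogous to cases (i)--(iii) of Case~1.1 to reach a contradiction. The main obstacle is the subcase in which one of $d^\pm$ contains both points of $S \cap K_1$ (or $S \cap K_2$) and no other points of $L$: here the sphere $\Sigma$ necessarily contains a component of $L$ as a curve on $P$ and cuts the remaining components nontrivially, so one must exploit the rational-tangle structure of $(B_j, B_j \cap K_3)$ from Figure~\ref{fig-sp10} together with the annulus $A$ between $K_1$ and $K_2$ to conclude.
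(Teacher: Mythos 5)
Your proposal does not close: you explicitly leave open the subcase in which one of the two disks $d^{\pm}$ of $S$ cut along $c$ contains both points of $S\cap K_1$ (or of $S\cap K_2$) and no other points of $L$, saying only that one must ``exploit the rational-tangle structure'' to conclude. That is precisely the case your machinery (non-null-homotopy of $c$ in $\pi_1(S^3\setminus L)$ plus a homological parity count) cannot reach, because a disk bounded by $c$ and meeting $L$ in two points of $K_1$ is perfectly compatible with $c$ being homotopically nontrivial and null-homologous in $S^3\setminus L$. Moreover, the sphere $\Sigma$ you propose to build in that subcase necessarily contains an entire component of $L$ as a curve on the $P$-side, so primality and innermost-loop simplifications alone will not yield a contradiction without substantial additional work that you have not supplied.

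The paper's proof is four lines long and avoids all of this by looking at the right sublink. Since $c$ is parallel to $K_1$ in $P\setminus L$, the link $c\cup K_3$ is ambient isotopic to $K_1\cup K_3$, which is a \emph{nontrivial} $2$-bridge link with $\mathrm{lk}(c,K_3)$ even ($0$ or $\pm 2$). Hence \emph{every} disk bounded by $c$ --- in particular each of the two disks into which $c$ cuts $S$ --- meets $K_3$ in at least one point by nontriviality of the link, hence in at least two points by parity. This forces $S$ to meet $K_3$ in at least four points, contradicting the fact that the $3$-bridge sphere $S$ meets each of the three components of $L$ in exactly two points. This single observation disposes of all of your subcases simultaneously, including the one you could not finish; your step one (non-null-homotopy of $c$) only rules out the easiest subcase, where $d^{+}$ or $d^{-}$ is disjoint from $L$ entirely.
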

%
\begin{figure}
\begin{center}
\includegraphics*[width=7cm]{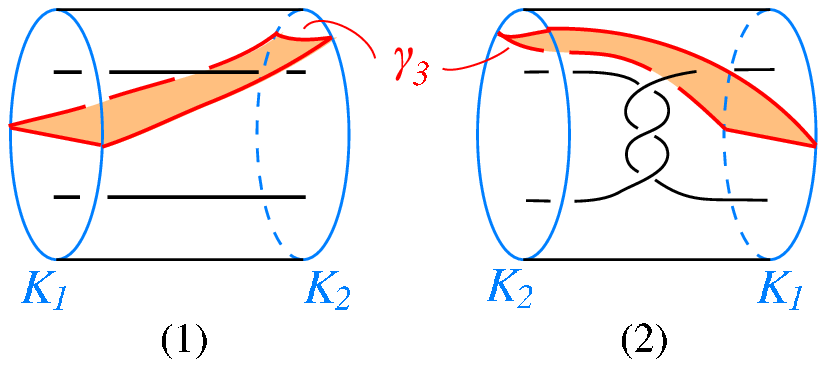}
\end{center}
\caption{}
\label{fig-sp13}
\end{figure}
%
\begin{proof}
Assume on the contrary that $S\cap P$ does not contain a loop $c$ parallel to $K_1$ (or $K_2$).
Then the union of $c$ and $K_3$ is equivalent to the sublink $K_1\cap K_3$ of $L$.
Since $c\cup K_3$ is a nontrivial 2-bridge link with linking number $0$ or $\pm 2$,
any disk bounded by $c$ meets $K_3$ in at least two points.
Note that $c$ cuts $S$ into two disks.
Then the above observation implies that $S$ meets $K$ in at least four points, a contradiction.
\qed

Hence, $S$ can be isotoped to $S_0$.

\begin{subcacase}
Suppose that both $S\cap D_1$ and $S\cap D_2$ satisfy (D2).
\end{subcacase}

Let $h':S^3\rightarrow [-1,1]$ be a height function 
such that $S_t:=h'^{-1}(t)$ is a 2-sphere which meets $K_i$ in two points for each $i=1,2,3$ when $-1<t<1$,
$S_0=S$ in particular,
and $S_{\pm 1}$ is an arc which meets $K_i$ in one point for each $i=1,2,3$.
By applying an argument similar to that for the height function $g=h|_D$ 
in Case (ii) in the proof of Lemma \ref{lem-p0} to $h'|_{D_1}$ (or $h'|_{D_2}$),
we can see that there exists $t_0\in (-1,1)$ such that 
$S_{t_0}$, isotopic to $S$, satisfies 
the assumption in Case 1.1 or Case 1.2.

\begin{cacase}
Suppose that the condition (A2) holds.
\end{cacase}

Note that every loop component of $A\cap S$
bounds a disk in $A$ or is isotopic to the core loop of $A$.
By Sublemma \ref{sublemma}, any loop component of $A\cap S$ cannot be isotopic to the core loop of $A$.
Hence, $A\cap S$ consists of only loop components bounding a disk in $A$.
By using an argument similar to that for the \lq\lq height function\rq\rq\
in the previous case,
we see that this case can be reduced to Case 1.

This completes the proof of Proposition \ref{lem-unique-sp}.
\end{proof}


\section{Classification of 3-bridge spheres}\label{classify}

In this section, we prove Theorem \ref{thm-distinction}. 

Let $L$ be a 3-bridge arborescent link and 
suppose that $L$ is not a Montesinos link.
By Theorem \ref{thm-main-2} (ii), 
$L$ admits only one 3-bridge sphere up to isotopy if $L\not\in \LL_1$.
Hence, we focus on the links in $\LL_1$.
Recall that $\LL_1$ is the family of 3-bridge arborescent links in Figure \ref{fig-3blinks} (1).
Then, the double branched covering of $S^3$ 
branched along a link in $\LL_1$
is a 3-manifold obtained from two Seifert fibered spaces 
$D(\beta_i/\alpha_i, \beta_i'/\alpha_i')$ $(i=1,2)$ over a disk
by gluing their boundaries 
so that a regular fiber and a horizontal loop of $M_1$
are identified with a horizontal loop and a regular fiber of $M_2$,
respectively.

The proof of Theorem \ref{thm-distinction} is based on the following fact 
(see \cite{Boi}). 

\begin{proposition}\label{nielsen}
Let $V_1\cup_F V_2$ and $W_1\cup_G W_2$ be genus-$g$ Heegaard splittings of a $3$-manifold $M$
such that $F$ and $G$ are isotopic.
Assume that the isotopy carries $V_1$ to $W_i$ for $i=1$ or $2$.
Then the generating system $\{x_1, x_2,\dots, x_g\}$ of $\pi_1(M)$ determined by that of $\pi_1(V_1)$
is Nielsen equivalent to
the generating system $\{y_1, y_2,\dots, y_g\}$ of $\pi_1(M)$ determined by that of $\pi_1(W_i)$.
In particular, if $g=2$ then the commutator $[x_1, x_2]$ is conjugate to $[y_1, y_2]^{\pm 1}$.
\end{proposition}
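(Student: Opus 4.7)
The plan is to deduce the statement from the fact that the isotopy of Heegaard surfaces realizing $F\simeq G$ and sending $V_1$ to $W_i$ extends to an ambient isotopy of $M$; that is, there is a self-homeomorphism $f:M\to M$ isotopic to $\id_M$ with $f(F)=G$ and $f(V_1)=W_i$. Its restriction $f|_{V_1}:V_1\to W_i$ is a homeomorphism of genus-$g$ handlebodies, and because $f\simeq \id_M$ the induced map $f_*$ on $\pi_1(M)$ is inner. In particular, for every $\gamma\in\pi_1(V_1)$ the element $(\iota_{W_i})_*(f|_{V_1})_*(\gamma)$ agrees with $(\iota_{V_1})_*(\gamma)$ up to conjugation in $\pi_1(M)$, where $\iota_{V_1}$ and $\iota_{W_i}$ denote the respective inclusions into $M$.

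Next I would compare free generating systems of the handlebodies. Setting $y_k':=(f|_{V_1})_*^{-1}(y_k)\in\pi_1(V_1)$ for $k=1,\dots,g$, the tuple $(y_1',\dots,y_g')$ is a free generating system of $\pi_1(V_1)\cong F_g$ because $f|_{V_1}$ is a homeomorphism. By Nielsen's theorem, any two free generating systems of a finitely generated free group differ by a finite composition of Nielsen transformations, so $(x_1,\dots,x_g)$ and $(y_1',\dots,y_g')$ are Nielsen equivalent in $\pi_1(V_1)$. Applying $(\iota_{V_1})_*$ and using the preceding paragraph to identify $(\iota_{V_1})_*(y_k')$ with $y_k=(\iota_{W_i})_*(y_k)$ up to a common conjugation, I obtain the asserted Nielsen equivalence of $(x_1,\dots,x_g)$ and $(y_1,\dots,y_g)$ as generating systems of $\pi_1(M)$.

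For the commutator statement with $g=2$, I would invoke the classical fact that every automorphism $\varphi$ of $F_2$ maps the commutator of a free basis to a conjugate of itself or its inverse; more precisely, $\varphi([a,b])$ is conjugate to $[a,b]^{\det\varphi_{\mathrm{ab}}}$, where $\varphi_{\mathrm{ab}}\in GL(2,\Z)$ is the induced automorphism of the abelianization. This is verified directly on the standard generators of $\aut(F_2)$, or by identifying $F_2$ with the fundamental group of a once-punctured torus whose boundary curve represents $[a,b]$. Applying this to the automorphism of $\pi_1(V_1)\cong F_2$ carrying $(x_1,x_2)$ to $(y_1',y_2')$ and pushing forward to $\pi_1(M)$ via $(\iota_{V_1})_*$ yields the desired conjugacy between $[x_1,x_2]$ and $[y_1,y_2]^{\pm 1}$ in $\pi_1(M)$. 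The main technical point requiring care is the bookkeeping of basepoints: the identifications of $\pi_1(V_1)$, $\pi_1(W_i)$, and $\pi_1(M)$ depend on basepoint choices, and the ambient isotopy moves the basepoint, producing the inner-automorphism ambiguity that is absorbed into the common-conjugation clause of Nielsen equivalence; apart from this routine issue, the argument is essentially formal.
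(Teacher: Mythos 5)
Your argument is correct, but there is no in-paper proof to compare it against: the paper states this proposition as a known fact, citing Boileau--Collins--Zieschang \cite{Boi}, and offers no argument of its own. Judged on its own merits, your proof is the standard one and all the ingredients are right: isotopy extension gives $f\simeq \id_M$ with $f(F)=G$ and $f(V_1)=W_i$; a homeomorphism isotopic to the identity induces an inner automorphism of $\pi_1(M)$; Nielsen's theorem says any two free bases of $\pi_1(V_1)\cong F_g$ are related by elementary Nielsen transformations, and these relations persist under the surjection $\pi_1(V_1)\twoheadrightarrow\pi_1(M)$; and for $g=2$ the classical fact that an automorphism of $F_2$ carries $[a,b]$ to a conjugate of $[a,b]^{\pm 1}$ (with sign the determinant of the abelianized automorphism) finishes the commutator claim. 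One point you should make explicit rather than wave at: the ``common-conjugation clause'' is not normally part of the definition of Nielsen equivalence, which is phrased purely in terms of elementary transformations. What rescues the step is that simultaneous conjugation of a generating tuple of a group $G$ by any $c\in G$ is itself a product of elementary Nielsen transformations, since $c$ is a word in the generators and conjugation by a single generator $y_j$ is realized by left- and right-multiplying each of the other entries by $y_j^{\pm 1}$ while leaving $y_j$ fixed. With that observation inserted, the proof is complete and self-contained.
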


By using this proposition, 
we distinguish, up to isotopy, the Heegaard surfaces which appear in the proof of Theorem \ref{thm-main-2}.

We need the following lemma to solve the conjugacy problems 
that appear in Lemmas \ref{distinction-hs-1} and \ref{distinction-hs-2}.

\begin{lemma}\label{lem-word-jan}
Let $M=D(\beta_1/\alpha_1, \beta_2/\alpha_2)$ be a Seifert fibered space over a disk 
with two exceptional fibers ($\alpha_i>1$), 
then $\pi_1(M)$ has a group presentation
$$
\pi_1(M)\cong \langle c_1, c_2, h 
\mid [c_j,h], c_j^{\alpha_j}h^{\beta_j} (j=1,2)
\rangle ,
$$
where $\pi_1(\partial M)=\langle c_1c_2, h\rangle$.
For $i=1,2$, 
let $\eta_i$ be the element of $\pi_1(M)$ represented by the exceptional fiber of $M$ 
with Seifert index $\beta_i/\alpha_i$, namely, 
$\eta_i=c_i^{\gamma_i}h^{\delta_i}$
for some $\gamma_i$ and $\delta_i$ such that $\alpha_i\delta_i-\beta_i\gamma_i=1$. 

For integers a, b, c and d, let 
$$
w(a,b,c,d)= \{(c_1c_2)^ah^b\} \eta_1 \{(c_1c_2)^ch^d\}\in \pi_1(M). 
$$
Then the followings are the only solutions of the equation 
$w(a,b,c,d)=\eta_1^{\pm 1}\ or\ \eta_2^{\pm 1}${\rm :}
\begin{eqnarray*}
\begin{array}{rl}
{\rm (i)} & w(0,b,0,-b)=\eta_1, \\
{\rm (ii)} & w(\pm 1,b,\pm 1,-b-2k_1\pm \beta_2)=\eta_1^{-1}\ 
when\ \beta_1=\pm 1+k_1\alpha_1\ and\ \alpha_2=2,\\
{\rm (iii)} & w(-1,b,0,-b-k_1-k_2)=\eta_2^{\pm 1}\ 
when\ \beta_1=-1+k_1\alpha_1\ and\ \beta_2=\pm 1+k_2\alpha_2,\\
{\rm (iv)} & w(0,b,1,-b+k_1+k_2)=\eta_2^{\pm 1}\ 
when\ \beta_1=1+k_1\alpha_1\ and\ \beta_2=\mp 1+k_2\alpha_2,
\end{array}
\end{eqnarray*}
where $k_i$ is an integer $(i=1,2)$.
\end{lemma}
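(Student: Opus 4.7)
The plan is to reduce the equation to a computation in the quotient $G := \pi_1(M)/\langle h\rangle$, where $\langle h\rangle$ is the central cyclic subgroup generated by the regular fiber. Because $h$ is central, one may first rewrite
\[
w(a,b,c,d) \;=\; (c_1c_2)^a\,\eta_1\,(c_1c_2)^c \cdot h^{b+d},
\]
and the given presentation of $\pi_1(M)$ collapses, after killing $h$, to $G \cong \Z/\alpha_1 * \Z/\alpha_2$, the orbifold fundamental group of the base disk with two cone points. Writing $\bar c_i\in G$ for the image of $c_i$, the image of $\eta_i$ is $\bar c_i^{\gamma_i}$, and the equation $w(a,b,c,d) = \eta_j^{\pm 1}$ in $\pi_1(M)$ forces
\[
(\bar c_1\bar c_2)^a\, \bar c_1^{\gamma_1}\,(\bar c_1\bar c_2)^c \;=\; \bar c_j^{\pm\gamma_j}
\]
in $G$; once the admissible pairs $(a,c)$ are identified, the $h$-exponent $b+d$ (together with the choice of sign on the right-hand side) is pinned down by bookkeeping in $\pi_1(M)$.

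The crux is therefore a normal-form computation in the free product $G$. By the Normal Form Theorem for free products, every element of $G$ has a unique alternating expression in non-trivial powers of $\bar c_1$ and $\bar c_2$, and the target $\bar c_j^{\pm\gamma_j}$ has length at most one in this form. I would enumerate, by the signs of $a$ and $c$, how the alternating word $(\bar c_1\bar c_2)^a\bar c_1^{\gamma_1}(\bar c_1\bar c_2)^c$ can collapse: each interior cancellation requires some syllable of the form $\bar c_1^{\gamma_1\pm 1}$ or $\bar c_2^{\pm 2}$ to be trivial in $\Z/\alpha_1$ or $\Z/\alpha_2$, respectively. The case $a=c=0$ trivially produces case (i); the case $a=c=\pm 1$ with $\gamma_1\equiv\mp 1\pmod{\alpha_1}$ and $\alpha_2=2$ produces case (ii), the middle syllable $\bar c_1^{\gamma_1\pm 1}$ vanishing and the residual $\bar c_2^{\pm 2}$ collapsing to the identity; the one-sided configurations $(a,c)=(-1,0)$ and $(0,1)$ collapse to single powers of $\bar c_2$ and match $\bar c_2^{\pm\gamma_2}$ exactly under the congruences $\gamma_1\equiv\pm 1\pmod{\alpha_1}$, $\gamma_2\equiv\pm 1\pmod{\alpha_2}$ recorded in (iii) and (iv). A length estimate on the alternating normal form then rules out any configurations outside these four.

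For each admissible $(a,c)$, the lift to $\pi_1(M)$ is a routine calculation: every time a syllable $\bar c_i^{\alpha_i}$ is collapsed in $G$, the relator $c_i^{\alpha_i}=h^{-\beta_i}$ is applied in $\pi_1(M)$, contributing a known power of $h$; combining these with the $h$-parts of the $\eta_i$ and the initial factor $h^{b+d}$ determines $d$ as a linear function of $b$ and reproduces the formulas $-b$, $-b-2k_1\pm\beta_2$, $-b-k_1-k_2$, $-b+k_1+k_2$ stated in (i)--(iv). Translating the congruences on $\gamma_i$ into those on $\beta_i$ via $\alpha_i\delta_i-\beta_i\gamma_i=1$ is immediate. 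The main obstacle I expect is the exhaustive enumeration of collapsing configurations in the normal form, together with carrying the $\pm$ signs through each cancellation correctly; once the four patterns are isolated, the $h$-bookkeeping becomes a direct calculation.
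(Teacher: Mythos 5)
Your proposal follows essentially the same route as the paper's own proof: pass to the quotient $\pi_1(M)/\langle h\rangle\cong\Z/\alpha_1*\Z/\alpha_2$ (using centrality of $h$), constrain $(a,c)$ by a normal-form/length argument in the free product, and then recover $d$ as a linear function of $b$ by tracking powers of $h$ through the relations $c_i^{\alpha_i}=h^{-\beta_i}$. The case analysis you sketch (collapse forcing $\gamma_1\equiv\pm1\pmod{\alpha_1}$, $\alpha_2=2$ for the two-sided configuration, and the one-sided configurations $(a,c)=(-1,0)$, $(0,1)$) matches the paper's enumeration, so the argument is correct and not a genuinely different approach.
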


Let $A*B$ be the free product of two nontrivial groups $A$ and $B$. 
A word $w=g_1g_2\cdots g_n \in$ $A*B$ $(n\geq 0)$ is said to be of {\it normal form} 
if (i) $g_i\neq 1$, 
(ii) $g_i\in A$ or $g_i\in B$ ($1\leq i\leq n$) and 
(iii) $g_i\in A$ iff $g_{i+1}\in B$ ($1\leq i\leq n-1$). 
Here, $n$ is called the {\it length} of $w \in A*B$ 
and denoted by $|w|$. 
Then, if a word $w=g_1g_2\cdots g_n \in A*B$ is of normal form and $n>1$, 
then $w\neq 1$ in $A*B$ (see, for example, \cite[Ch. IV Theorem 1.2]{Lyn}).

To prove Lemma \ref{lem-word-jan}, 
we improve the argument in \cite[Lemma 4.5]{Mor2} and \cite[Lemma 4.3]{Jan1}
which was used to solve certain word problems in the torus knot group.\\

{\sc Proof of Lemma \ref{lem-word-jan}.}
%
We describe only the proof for the case $a\leq -1$. 
The other cases can be treated by similar arguments.

Consider the quotient group
\begin{eqnarray*}
\pi_1(M)/\langle h\rangle \cong 
\langle c_1 \mid c_1^{\alpha_1}=1 \rangle * 
\langle c_2 \mid c_2^{\alpha_2}=1 \rangle.
\end{eqnarray*}
Suppose $w(a,b,c,d)=\eta_1^{\pm 1}$ or $\eta_2^{\pm 1}$ in $\pi_1(M)$. 
Then we have $\hat{w}(a,c)=c_1^{\pm \gamma_1}$ or $c_2^{\pm \gamma_2}$ in $\pi_1(M)/\langle h\rangle$, 
where $\hat{w}(a,c)=(c_1c_2)^ac_1^{\gamma_1}(c_1c_2)^c$ is the element of the quotient group represented by $w(a,b,c,d)$. 

Suppose $c\geq 1$.
Then we have $$\hat{w}(a,c)=(c_2^{-1}c_1^{-1})^{|a|-1}c_2^{-1}c_1^{\gamma_1}c_2(c_1c_2)^{c-1}.$$ 
Thus $|\hat{w}(a,c)|>1$ and hence the equation has no solution. 

Suppose $c=0$.
Then we have $$\hat{w}(a,0)=(c_2^{-1}c_1^{-1})^{|a|-1}c_2^{-1}c_1^{\gamma_1-1}.$$
So $|\hat{w}(a,0)|=1$ if and only if 
$a=-1$ and $\gamma_1\equiv 1\pmod{\alpha_1}$, namely, $\gamma_1=1+k_1'\alpha_1$ for some integer $k_1'$, 
since the order of $c_1$ in $\pi_1(M)/\langle h \rangle$ is $\alpha_1$. 
Recall that $\alpha_1\delta_1-\beta_1\gamma_1=1$. 
Thus we have $\beta_1\gamma_1\equiv -1\pmod{\alpha_1}$, which implies that
$\beta_1\equiv -1$ (mod $\alpha_1$), namely, $\beta_1=-1+k_1\alpha_1$ for some integer $k_1$. 
In this case, $\hat{w}(-1,0)=c_2^{-1}$, 
and this implies 
$\pm \gamma_2\equiv -1\pmod{\alpha_2}$ and hence
$\gamma_2=\mp 1+k_2'\alpha_2$ for some integer $k_2'$ and 
$\beta_2=\pm 1+k_2\alpha_2$ for some integer $k_2$. 
Moreover, we have $w(-1,b,0,d)=\eta_2^{\pm 1}$,
which in turn implies
$$
c_2^{\mp \gamma_2-1}c_1^{\gamma_1-1}h^{b+d+\delta_1\mp\delta_2}=1
$$
in $\pi_1(M)$. 
Since $c_1^{\gamma_1-1}=c_1^{\alpha_1k_1'}=h^{-\beta_1k_1'}$ and 
$c_2^{\pm \gamma_2-1}=c_2^{\pm \alpha_2k_2'}=h^{\mp \beta_2k_2'}$, 
we obtain 
$$
h^{\pm \beta_2k_2'-\beta_1k_1'+b+d+\delta_1\mp\delta_2}=1,
$$
and hence 
$$
d=-b-\delta_1\pm \delta_2+\beta_1k_1'\mp \beta_2k_2'.
$$
Since $1=\alpha_1\delta_1-\beta_1\gamma_1
=\alpha_1\delta_1-\beta_1(1+\alpha_1k_1')=\alpha_1(\delta_1-\beta_1k_1')-(-1+\alpha_1k_1)$,
we have $\delta_1-\beta_1k_1'=k_1$.
Similarly, we have $\delta_2-\beta_2k_2'=\mp k_2$.
This implies that 
$$
d=-b-k_1-k_2.
$$
Thus we obtain the solution (iii).

Suppose $c\leq -1$.
Then we have $$\hat{w}(a,0)=(c_2^{-1}c_1^{-1})^{|a|-1}c_2^{-1}c_1^{\gamma_1-1}c_2^{-1}c_1^{-1}(c_2^{-1}c_1^{-1})^{|c|-1}.$$ 
So $|\hat{w}(a,0)|=1$ if and only if 
$a=c=-1$, $\gamma_1\equiv 1\pmod{\alpha_1}$, namely, $\gamma_1=1+k_1'\alpha_1$ for some integer $k_1'$, 
and $-2\equiv 0\pmod{\alpha_2}$.
Since $\gamma_1\equiv -\beta_1$ (mod $\alpha_1$), 
we have $\beta_1\equiv -1$ (mod $\alpha_1$), namely, $\beta_1=-1+k_1\alpha_1$ for some integer $k_1$, 
and we also have $\alpha_2=2$. 
In this case, $\hat{w}(-1,-1)=c_1^{-1}$,
and this implies $w(-1,b,-1,d)=\eta_1^{-1}$ and hence
$$
c_2^{-1}c_1^{\gamma_1-1}c_2^{-1}c_1^{\gamma_1-1}h^{b+d+2\delta_1}=1
$$
in $\pi_1(M)$. 
Since $c_1^{\gamma_1-1}=c_1^{\alpha_1k_1'}=h^{-\beta_1k_1'}$ and 
$c_2^{-2}=c_2^{-\alpha_2}=h^{\beta_2}$, 
we obtain 
$$
h^{\beta_2-2\beta_1k_1'+b+d+2\delta_1}=1,
$$
and hence 
$$
d=-b-2\delta_1+2\beta_1k_1'-\beta_2.
$$
Since $\delta_1-\beta_1k_1'=k_1$,
we have
$$
d=-b-2k_1-\beta_2.
$$
Thus we obtain the solution (ii).
\qed
\vspace{2mm}

The following lemma can be proved similarly.

\begin{lemma}\label{sublemma}
Let $M=D(\beta_1/\alpha_1, \beta_2, \alpha_2)$, $\eta_i$ ($i=1,2)$ and
$w(a,b,c,d)$ as in Lemma \ref{lem-word-jan}.
Then the equation $w(a,b,c,d)=1$ has no solutions
(i.e., an exceptional fiber of $M$ is not homotopic to a loop on $\partial M$).
\end{lemma}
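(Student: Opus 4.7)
The plan is to imitate the quotient-group strategy used in the proof of Lemma \ref{lem-word-jan}. Project $\pi_1(M)$ onto
$$\pi_1(M)/\langle h\rangle \;\cong\; \Z_{\alpha_1}*\Z_{\alpha_2},$$
with the two free factors generated by the images of $c_1$ and $c_2$. The image of $w(a,b,c,d)$ under this projection is
$$\hat{w}(a,c)=(c_1c_2)^a\, c_1^{\gamma_1}\,(c_1c_2)^c,$$
which depends only on $a$ and $c$. Therefore the equation $w(a,b,c,d)=1$ in $\pi_1(M)$ implies $\hat{w}(a,c)=1$ in the free product, and it suffices to rule out the latter.

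Moving the two outer factors to the right-hand side (and using that $(c_1c_2)^{-a}$ and $(c_1c_2)^{-c}$ commute, being powers of the same element), I would rewrite $\hat{w}(a,c)=1$ as
$$c_1^{\gamma_1}=(c_1c_2)^{-(a+c)}.$$
The next step is to compare normal-form lengths in the free product. The right-hand side is already in normal form as a word alternating between the two free factors, and hence has length exactly $2|a+c|$. The left-hand side lies in the single factor $\Z_{\alpha_1}$ and therefore has length at most $1$. Equality thus forces $a+c=0$ and $c_1^{\gamma_1}=1$, i.e., $\alpha_1\mid \gamma_1$.

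Finally, $\alpha_1\mid\gamma_1$ is incompatible with the hypothesis $\alpha_1>1$: the relation $\alpha_1\delta_1-\beta_1\gamma_1=1$ gives $\gcd(\alpha_1,\gamma_1)=1$, so $\alpha_1$ cannot divide $\gamma_1$. This contradiction proves $\hat{w}(a,c)\neq 1$ in the quotient, and hence $w(a,b,c,d)\neq 1$ in $\pi_1(M)$ for all integers $a,b,c,d$.

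I do not expect any serious obstacle. In Lemma \ref{lem-word-jan} the target elements $\eta_i^{\pm 1}$ project to length-$1$ words in the free product, and matching against such targets requires the lengthy case analysis carried out there. In the present statement the target is the identity, which forces both sides of the displayed equation to be simultaneously trivial; this is patently incompatible with the alternating structure of any nonzero power of $c_1c_2$, so the only verification needed is the elementary fact that $\gcd(\alpha_1,\gamma_1)=1$.
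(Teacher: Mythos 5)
Your proposal is correct and follows exactly the route the paper intends: the paper omits the proof of this lemma, stating only that it ``can be proved similarly'' to Lemma \ref{lem-word-jan}, and your argument is precisely that projection to $\pi_1(M)/\langle h\rangle\cong\Z_{\alpha_1}*\Z_{\alpha_2}$ followed by a normal-form length comparison, here simplified because the target is the identity. The final step ($\gcd(\alpha_1,\gamma_1)=1$ from $\alpha_1\delta_1-\beta_1\gamma_1=1$, together with $\alpha_1>1$) correctly closes the argument.
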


\begin{lemma}\label{sublemma2}
An unknotting tunnel $\tau$ of a nontrivial knot $K$ in $S^3$ 
is not homotopic to an arc on $\partial E(K)$.
\end{lemma}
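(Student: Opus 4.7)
The plan is to argue by contradiction: suppose $\tau$ is an unknotting tunnel of a nontrivial knot $K \subset S^3$ and that $\tau$ is homotopic rel $\partial$ to an arc on $\partial E(K)$, and then show that $K$ must be the unknot.

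First I would convert the homotopy into an embedded disk. Since $K$ is nontrivial, $E(K)$ is Haken (irreducible with incompressible torus boundary), so by a standard application of Dehn's lemma/the loop theorem, a properly embedded arc in a Haken manifold that is homotopic rel $\partial$ into $\partial$ is actually $\partial$-parallel. This yields an embedded disk $D \subset E(K)$ with $\partial D = \tau \cup \tau'$ for some arc $\tau' \subset \partial E(K)$.

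Next I would analyze the effect on $E(K) \setminus N(\tau)$. Taking a regular neighborhood $N(D)$ of the $\partial$-parallel disk $D$, one sees that $N(D)$ is a $3$-ball attached to $\partial E(K)$ along a single disk, inside which $\tau$ is an unknotted arc whose complement in $N(D)$ is a solid torus. It follows that
\[
E(K)\setminus N(\tau) \;\cong\; E(K)\,\natural\,(S^1\times D^2),
\]
the boundary connected sum of $E(K)$ with a solid torus.

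Finally, since $\tau$ is an unknotting tunnel, $H:=E(K)\setminus N(\tau)$ is a genus-$2$ handlebody, so $\pi_1(H)$ is free of rank $2$. By the above identification, $\pi_1(H)\cong\pi_1(E(K))\ast\Z$, and Grushko's theorem then forces $\pi_1(E(K))$ to be free, necessarily of rank $1$ since $H_1(E(K))\cong\Z$. The standard fact that a compact, orientable, irreducible $3$-manifold with nonempty boundary and free fundamental group is a handlebody then implies $E(K)$ is a solid torus, so $K$ is the unknot---contradicting our assumption. The main obstacle is the first step, namely invoking the appropriate Dehn's lemma/loop theorem argument to promote the homotopy to an embedded $\partial$-parallel disk; everything else is essentially formal once this is in place.
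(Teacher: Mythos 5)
Your argument has a genuine gap at exactly the step you flag as the main obstacle. The claim that ``a properly embedded arc in a Haken manifold that is homotopic rel $\partial$ into the boundary is $\partial$-parallel'' is not a standard application of Dehn's lemma or the loop theorem, and it is false in general: take any $\partial$-parallel arc in $E(K)$ and tie a local knot in it inside a small ball. The new arc is still homotopic rel endpoints into $\partial E(K)$ (a homotopy may pass the arc through itself and undo the local knotting), but it is not $\partial$-parallel, since its exterior now contains an essential annulus cutting off the local knot's exterior. Dehn's lemma and the loop theorem promote singular disks bounded by \emph{loops} to embedded ones; there is no analogous tool for promoting a singular relative disk spanning an \emph{arc} to an embedded one --- this is precisely why locally knotted arcs exist. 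Everything downstream (the identification of $E(K)\setminus N(\tau)$ with a boundary connected sum, Grushko's theorem) is fine, but it all rests on the embedded disk you have not actually produced, and you make no use of the unknotting-tunnel hypothesis to rule out such pathologies.

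The paper's proof avoids embeddedness entirely and is purely group-theoretic: since $E(K)\setminus N(\tau)$ is a genus-$2$ handlebody, $E(K)$ is obtained from a regular neighborhood of $\partial E(K)\cup\tau$ by attaching $2$- and $3$-handles, so $\pi_1(E(K))$ is generated by the image of $\pi_1(\partial E(K)\cup\tau)$. If $\tau$ is homotopic into $\partial E(K)$, that image coincides with the image of the peripheral subgroup $\pi_1(\partial E(K))\cong\Z\oplus\Z$; a group generated by an abelian subgroup equals that subgroup, so $\pi_1(E(K))$ would be abelian, contradicting the nontriviality of $K$. You should either adopt this route or supply an actual proof that an unknotting tunnel homotopic into the boundary is $\partial$-parallel.
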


\begin{proof}
If $\tau$ is homotopic to an arc on $\partial E(K)$,
then it follows that the knot group $\pi_1(E(K))$ 
is generated by the image of $\pi_1(\partial E(K))$, 
and hence $\pi_1(E(K))$ is abelian. 
This contradicts the assumption that $K$ is nontrivial.
\end{proof}
\vspace{2mm}

Recall from Theorem \ref{thm-main-2} (i) that a link $L\in \LL_1$ admits 
at most four 3-bridge spheres $S_1$, $S_2$, $S_3$ and $S_4$ 
in Figure \ref{fig-3bspheres}
up to isotopy.
In the remainder of this section, 
let $F_i$ $(i=1,2,3,4)$ be the pre-image of $S_i$ 
in $M_2(L)$.

The following lemma gives a necessary condition for $F_1$ and $F_2$
to be isotopic.

\begin{lemma}\label{distinction-hs-1}
Let $M$ be a manifold which belongs to M(1-a) in \cite[Theorem 5]{Jan2}, 
that is, $M$ is obtained from $M_1=D(\beta_1/\alpha_1, \beta_1'/\alpha_1')$ 
and $M_2=D(\beta_2/\alpha_2, \beta_2'/\alpha_2')$ by gluing their boundaries
so that a regular fiber of $M_1$ is identified with a horizontal loop of $M_2$, 
where $\alpha_i, \alpha_i'>1$ for $i=1,2$.
Let $F_1$ and $F_2$ be the two genus-2 Heegaard surface of $M$
given as above.

Suppose that 
$
(\beta_k/\alpha_k, \beta_k'/\alpha_k')\not\sim
(\varepsilon_k/\alpha_k, \varepsilon_k'/\alpha_k')
$
for each $k=1,2$,
where $\varepsilon_k, \varepsilon_k'\in\{\pm 1\}$.
Then $F_1$ and $F_2$ are not isotopic.
\end{lemma}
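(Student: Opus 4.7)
The plan is to apply Proposition~\ref{nielsen}: if $F_1$ and $F_2$ were isotopic, there would be an index $i\in\{1,2\}$ such that the two generating systems of $\pi_1(M)$ coming from $\pi_1(V_1^{(1)})$ (one handlebody bounded by $F_1$) and from $\pi_1(V_i^{(2)})$ (one handlebody bounded by $F_2$) are Nielsen equivalent; in particular their commutators would be conjugate in $\pi_1(M)$ up to inversion. I intend to show this conjugacy forces precisely the condition excluded in the hypothesis.

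First I would use van~Kampen's theorem on the decomposition $M=M_1\cup_T M_2$ to write
\[
\pi_1(M)\cong\pi_1(M_1)*_{\pi_1(T)}\pi_1(M_2),
\]
where each $\pi_1(M_k)$ is presented as in Lemma~\ref{lem-word-jan} with generators $c_k,c_k',h_k$ and exceptional-fiber elements $\eta_k,\eta_k'$, and the amalgamation identifies a regular fiber of $M_1$ with a horizontal loop of $M_2$ and vice versa. From the pictures of $S_1$ and $S_2$ in Figure~\ref{fig-3bspheres} (1) and (2), I would read off the generators of each handlebody fundamental group. The geometric content is that the handlebodies bounded by $F_1$ (resp.\ $F_2$) are obtained by splitting $M$ in such a way that the two exceptional fibers of $M_1$ (resp.\ of $M_2$) naturally appear in the generating pair, so that the commutators computed from $F_1$ and from $F_2$ reduce, up to conjugation in $\pi_1(M)$, to conjugates of $\eta_1^{\pm1}$ (or $\eta_1'^{\pm1}$) and $\eta_2^{\pm1}$ (or $\eta_2'^{\pm1}$) respectively.

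Next, assuming the isotopy, I would obtain an equation asserting that some conjugate of $\eta_1^{\pm1}$ (say) equals $\eta_2^{\pm 1}$, or more precisely that one exceptional-fiber element is conjugate to another by an element represented by a word on $\pi_1(T)=\langle c_1c_1',h_1\rangle$. Pushing this conjugacy into the Seifert piece $M_k$ via the normal-form structure of the amalgamated product, it reduces to an equality of the form
\[
w(a,b,c,d)=\eta_k^{\pm1}\quad\text{or}\quad w(a,b,c,d)=\eta_k'^{\pm1}
\]
inside $\pi_1(M_k)$, with $w(a,b,c,d)=\{(c_kc_k')^ah_k^b\}\eta_k\{(c_kc_k')^ch_k^d\}$ as in Lemma~\ref{lem-word-jan}. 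By that lemma, a nontrivial such equality forces $\beta_k\equiv\pm1\pmod{\alpha_k}$ and $\beta_k'\equiv\pm1\pmod{\alpha_k'}$ with a matching relation on the sum of the Seifert indices; this is precisely
\[
(\beta_k/\alpha_k,\beta_k'/\alpha_k')\sim(\varepsilon_k/\alpha_k,\varepsilon_k'/\alpha_k')
\]
in the sense of Notation~\ref{notation} (recalling that $\sim$ also records the sum condition). Under the standing hypothesis this is excluded, contradicting the Nielsen equivalence, so $F_1$ and $F_2$ cannot be isotopic.

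The main obstacle will be the bookkeeping in the second step: correctly identifying the meridian disks (and hence the generating pairs) of the handlebodies bounded by $F_1$ and $F_2$ from the 3-bridge sphere pictures, together with reducing an amalgamated-product conjugacy to a one-piece word problem that fits the hypotheses of Lemma~\ref{lem-word-jan}. Once the conjugation has been pushed into a single Seifert piece, Lemma~\ref{lem-word-jan} (together with Lemma~\ref{sublemma} to rule out the trivial case coming from a boundary loop) completes the argument.
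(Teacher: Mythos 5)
Your overall skeleton --- Proposition~\ref{nielsen} to pass from an isotopy of Heegaard surfaces to a conjugacy of commutators, then Lemma~\ref{lem-word-jan} to rule that conjugacy out --- is the same as the paper's. But there is a concrete error in the middle that derails the argument. Each handlebody bounded by $F_1$ or $F_2$ meets the JSJ torus $T=\partial M_1=\partial M_2$ in an essential annulus and contains exactly \emph{one} exceptional fiber of $M_1$ and \emph{one} of $M_2$; the induced generating systems of $\pi_1(M)$ are the mixed pairs $\{u_2,u_1\}$, $\{v_2,v_1\}$ for $F_1$ and $\{u_2,v_1\}$, $\{v_2,u_1\}$ for $F_2$ (here $u_i,v_i$ are the exceptional fibers of $M_i$), not pairs of fibers taken from a single Seifert piece as you assert. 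Consequently the commutators to be compared, e.g.\ $[u_2,u_1]$ versus $[u_2,v_1]^{\pm1}$, are genuinely mixed words of length $\geq 4$ in $\pi_1(M_1)*_{\pi_1(T)}\pi_1(M_2)$, and your claim that they ``reduce, up to conjugation, to conjugates of $\eta_k^{\pm1}$'' is false; a commutator is never conjugate to a single exceptional fiber here, and if it were, the subsequent reduction you sketch would not produce the condition $\sim$ in the stated form.

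The step you defer as ``bookkeeping'' --- pushing the conjugacy into a single Seifert piece --- is in fact the technical heart of the proof, and it cannot be done by normal forms alone starting from your (incorrect) commutators. The paper realizes the free homotopy between $[u_2,u_1]$ and $[u_2,v_1]$ by a map $\Psi:S^1\times[0,1]\to M$ transverse to $T$, uses incompressibility of $T$ together with Lemma~\ref{sublemma} (exceptional fibers are not homotopic into $\partial M_i$) to arrange that $\Psi^{-1}(T)$ consists of arcs joining the two boundary circles, and thereby extracts a \emph{system} of four conjugation equations, alternating between the two pieces. The two equations in $\pi_1(M_2)$ force the conjugating elements to be powers $c_1^{n_1},c_1^{n_2}$ of the horizontal loop, and the remaining equation $c_1^{-n_1}u_1c_1^{n_2}=v_1$ in $\pi_1(M_1)$ is the one to which Lemma~\ref{lem-word-jan}\,(iii),(iv) applies; its solvability is exactly the excluded condition $(\beta_1/\alpha_1,\beta_1'/\alpha_1')\sim(\varepsilon_1/\alpha_1,\varepsilon_1'/\alpha_1')$. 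So your endpoint is right, but to repair the proposal you must correct the generating systems and supply the annulus argument (or an equivalent amalgamated-product conjugacy analysis) that converts the mixed-commutator conjugacy into that system of one-piece equations.
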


\begin{proof}
Let $U_1\cup U_2$ be a decomposition of $M_1$ 
by a saturated annulus 
and let $W_1\cup W_2$ be a one-bridge decomposition of $M_2$.
Put $V_1^1=U_1\cup W_1$, $V_2^1=U_2\cup W_2$, 
$V_1^2=U_1\cup W_2$ and $V_2^2=U_2\cup W_1$.
Then we may assume that
$F_i=\partial V_1^i=\partial V_2^i$ 
(see Figure \ref{fig-hs} (F1), \cite{Mor} and \cite[Case 1 in Section 7]{Jan2}).

We describe the generating system of the fundamental group $\pi_1(M)$ of $M$
arising from each handlebody, $V_j^i$ $(i,j\in\{1,2\})$.
Pick a base point $x_0$ for the fundamental group of $M$ on $T\cap F_1\cap F_2$,
where $T:=\partial M_1=\partial M_2$.
Let $u_i$ and $v_i$ be exceptional fibers of $M_i$ 
whose Seifert indices are $\beta_i/\alpha_i$ and $\beta_i'/\alpha_i'$, 
respectively.
Connect these loops to $x_0$ by arcs in $M_i$ 
which does not meet $F_i$. 
We denote the generators of $\pi_1(M, x_0)$ 
obtained from $u_i$, $v_i$ with the arcs above
by $u_i$, $v_i$ again.
Then we have generating systems 
$\{u_2, u_1\}$ for $\pi_1(V_1^1)$, $\{v_2, v_1\}$ for $\pi_1(V_2^1)$,
$\{u_2, v_1\}$ for $\pi_1(V_1^2)$ and $\{v_2, u_1\}$ for $\pi_1(V_2^2)$.
This can be seen by using the fact that
the 1-bridge decomposition $W_1\cup W_2$ of $M_2$ can be chosen so that
$W_1$ is the regular neighborhood in $M_2$ of the graph obtained by connecting 
a horizontal loop and the exceptional fiber of $M_2$ of index $\beta_2/\alpha_2$
(see Figure \ref{fig-gen-sys} and \cite[Figure 18]{Jan2}).
%
\begin{figure}
\begin{center}
\includegraphics*[width=4cm]{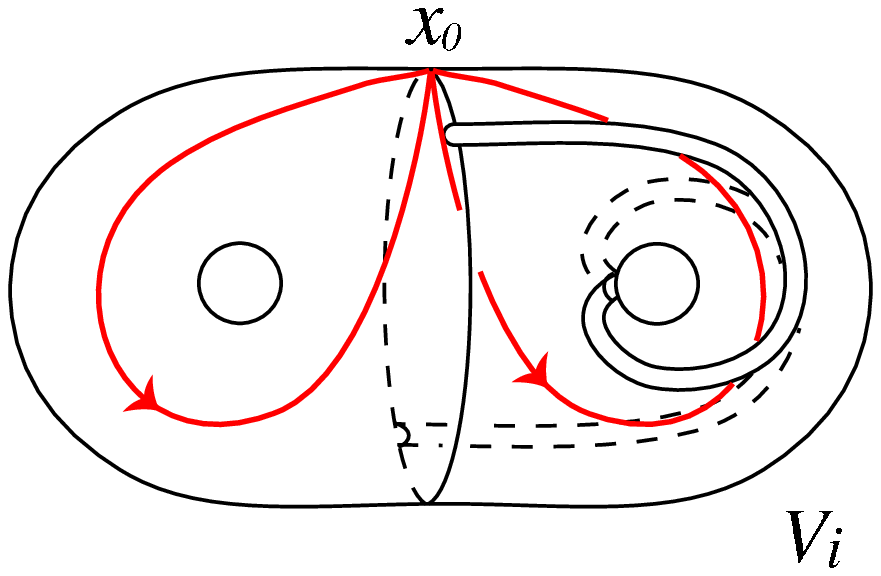}
\end{center}
\caption{}
\label{fig-gen-sys}
\end{figure}
%

Suppose that 
$
(\beta_k/\alpha_k, \beta_k'/\alpha_k')\not\sim
(\varepsilon_k/\alpha_k, \varepsilon_k'/\alpha_k')
$
for both $k=1,2$,
where $\varepsilon_k, \varepsilon_k'\in\{\pm 1\}$.
We prove $F_1$ and $F_2$ are not isotopic.

Assume, on the contrary, that $F_1$ and $F_2$ are isotopic.
Then, by Proposition \ref{nielsen}, 
$[u_2, u_1]$ is conjugate to $[u_2, v_1]^{\pm 1}$ or $[v_2, u_1]^{\pm 1}$. 

Assume that $[u_2, u_1]$ is conjugate to $[u_2, v_1]$.
Let $f_0:S^1\rightarrow M$ and $f_1:S^1\rightarrow M$
be maps representing the elements $[u_2, u_1]$ and $[u_2, v_1]$, 
respectively.
By the assumption, $f_0$ and $f_1$ are freely homotopic, 
and hence there is a map
$$
\Psi:S^1(=\R/\Z)\times [0,1]\longrightarrow M
$$
such that $\Psi |_{S^1\times \{0\}}=f_0$ and $\Psi |_{S^1\times\{1\}}=f_1$.
We may assume that $\Psi$ is transverse to $T(=\partial M_1=\partial M_2)$ and 
that $f_i^{-1}(T)$ consists of 4 points $(i=0,1)$. 
Note that the images of the 4 points by $f$ are all equal to the base point $x_0$ of $\pi_1(M)$.
To be precise, we may assume that
\begin{align*}
\begin{array}{rllrll}
\Psi([0,\frac{1}{4}]\times\{0\}) & = & u_2, &
\Psi([0,\frac{1}{4}]\times\{1\}) & = & u_2, \\
\Psi([\frac{1}{4},\frac{1}{2}]\times\{0\}) & = & u_1, &
\Psi([\frac{1}{4},\frac{1}{2}]\times\{1\}) & = & v_1, \\
\Psi([\frac{1}{2},\frac{3}{4}]\times\{0\}) & = & u_2^{-1}, &
\Psi([\frac{1}{2},\frac{3}{4}]\times\{1\}) & = & u_2^{-1}, \\
\Psi([\frac{3}{4},1]\times\{0\}) & = & u_1^{-1}, &
\Psi([\frac{3}{4},1]\times\{1\}) & = & v_1^{-1}.
\end{array}
\end{align*}
Since $\Psi$ is transverse to $T$, 
$\Psi^{-1}(T)$ is a 1-dimensional submanifold of $S^1\times I$.
Since $T$ is incompressible, 
we may further assume that $\Psi^{-1}(T)$ consists of only arcs.
Since $u_i$ and $v_i$ $(i=1,2)$ cannot be homotoped into the boundary 
(see Lemma \ref{sublemma}),
each component of $\Psi^{-1}(T)$ is an arc 
joining $S^1\times\{0\}$ and $S^1\times\{1\}$.
Then, noting the intersection of $\Psi |_{S^1\times\{i\}}$ and $T$,
we see that the map $\Psi$ is as in Figure \ref{fig-word5} (1) or in Figure \ref{fig-word5} (2).
%
\begin{figure}
\begin{center}
\includegraphics*[width=8.5cm]{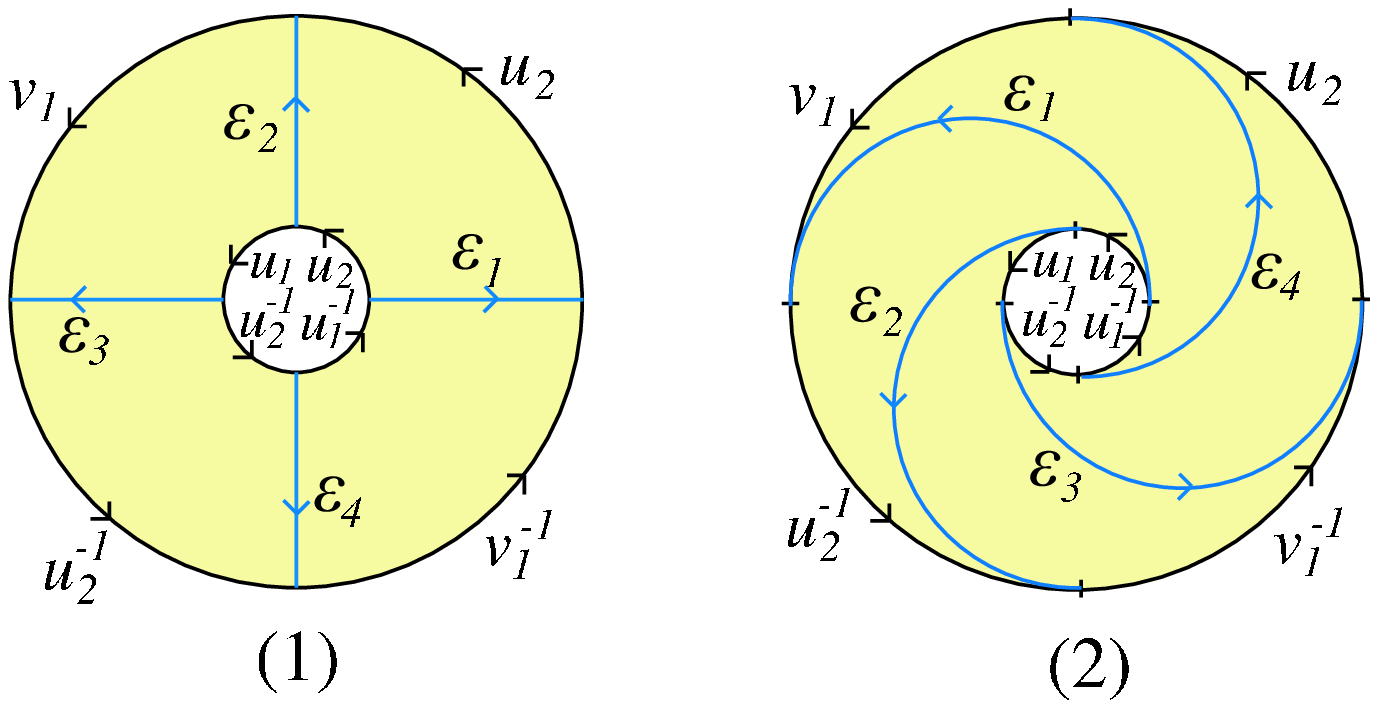}
\end{center}
\caption{}
\label{fig-word5}
\end{figure}
That is, we may assume 
\begin{eqnarray}\label{equation}
&\left\{
\begin{array}{l}
\Psi^{-1}(M_2) = ([0,\frac{1}{4}]\cup[\frac{1}{2},\frac{3}{4}])\times[0,1],\\
\Psi^{-1}(M_1) = ([\frac{1}{4},\frac{1}{2}]\cup[\frac{3}{4},1])\times[0,1],
\end{array}
\right.
\end{eqnarray} 
or
\begin{eqnarray}\label{equation2}
&\left\{
\begin{array}{l}
\Psi^{-1}(M_2) = \{(t+\frac{1}{2}s,s)\mid 
t\in [0,\frac{1}{4}]\cup[\frac{1}{2},\frac{3}{4}],s\in [0,1]\},\\
\Psi^{-1}(M_1) = \{(t+\frac{1}{2}s,s)\mid 
t\in [\frac{1}{4},\frac{1}{2}]\cup[\frac{3}{4},1],s\in [0,1]\}.
\end{array}
\right.
\end{eqnarray}

Assume that the identity (\ref{equation}) holds. 
Let $\varepsilon_i$ be the elements of $\pi_1(\partial M_1,x_0)$ 
represented by $\Psi\mid_{\{\frac{i-1}{4}\}\times[0,1]} (i=1,2,3,4)$.\ Then
%

\begin{align}
\varepsilon_1^{-1}u_2\varepsilon_2 & = u_2 \in \pi_1(M_2),\label{eqn1}\\
\varepsilon_2^{-1}u_1\varepsilon_3 & = v_1 \in \pi_1(M_1),\label{eqn2}\\
\varepsilon_3^{-1}u_2^{-1}\varepsilon_4 & = u_2^{-1} \in \pi_1(M_2),\label{eqn3}\\
\varepsilon_4^{-1}u_1^{-1}\varepsilon_1 & = v_1^{-1} \in \pi_1(M_1).\label{eqn4}
\end{align}

By Lemma \ref{lem-word-jan}, 
equations (\ref{eqn1}) and (\ref{eqn3}) have solutions if and only if
\begin{align}
\varepsilon_1=\varepsilon_2&=h_2^{n_1}(=c_1^{n_1})\label{eqn5}\\
\varepsilon_3=\varepsilon_4&=h_2^{n_2}(=c_1^{n_2})\label{eqn6}
\end{align}
for some integers $n_1$ and $n_2$, 
where $h_i$ is a regular fiber of $M_i$ ($i=1,2$) and 
$c_1$ is a horizontal loop of $M_1$,
which is identified with $h_2$.
From equation (\ref{eqn2}), we have
$$
c_1^{-n_1}u_1c_1^{n_2}=v_1.
$$
By Lemma \ref{lem-word-jan} (iii) and (iv), 
this equation has a solution if and only if 
\begin{itemize}
\item[(i)] $\beta_1=-1+k_1\alpha_1, \beta_1'=1+k_1'\alpha_1'$, $k_1+k_1'=0$,
$n_1=1$ and $n_2=0$, or
\item[(ii)] $\beta_1=1+k_1\alpha_1, \beta_1'=-1+k_1'\alpha_1'$, $k_1+k_1'=0$,
$n_1=0$ and $n_2=1$.
\end{itemize}
The first three equalities in (i) (or (ii)) together imply 
$\frac{\beta_1}{\alpha_1}+\frac{\beta_1'}{\alpha_1'}=
\frac{-1}{\alpha_1}+\frac{1}{\alpha_1'}$
(or $\frac{\beta_1}{\alpha_1}+\frac{\beta_1'}{\alpha_1'}=
\frac{1}{\alpha_1}+\frac{-1}{\alpha_1'}$).
Hence, by the hypothesis, 
there do not exist $\{\varepsilon_i\}_{i=1,2,3,4}$ 
which satisfy equalities (\ref{eqn1})$,\dots,$(\ref{eqn4}). 
This is a contradiction. 

We can also lead to a contradiction 
when the equation (\ref{equation2}) holds 
(see Lemma \ref{lem-word-jan}, (ii), (iii) and (iv)). 
Hence $[u_2,u_1]$ and $[u_2,v_1]$ are not conjugate. 

Similarly, it can be proved that $[u_2, u_1]$ is not conjugate to 
$[u_2, v_1]^{-1}$ or $[v_2, u_1]^{\pm 1}$. 
Hence, $F_1$ and $F_2$ are not isotopic.
\end{proof}
\vspace{2mm}

The following lemma says that 
any two of $F_1$, $F_2$, $F_3$ and $F_4$ cannot be isotopic
unless they are $F_1$ and $F_2$.

\begin{lemma}\label{distinction-hs-2}
Let $M$ be a manifold 
which belongs to M(1-a) or M(2-a) in \cite[Theorem 5]{Jan2}, 
that is, $M$ is obtained from $M_1, M_2\in D[2]$
by gluing their boundaries so that a regular fiber of $M_1$ is identified with a horizontal loop of $M_2$.
Let $\{G_1, G_2\}$ be a subset of the set 
$\{F_1,F_2,F_3,F_4\}$ of genus-2 Heegaard surfaces of $M$,
and suppose that $\{G_1, G_2\}\neq\{F_1,F_2\}$.
Then $G_1$ and $G_2$ are not isotopic.
\end{lemma}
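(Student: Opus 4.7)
The plan is to follow the blueprint of Lemma~\ref{distinction-hs-1}: for each pair $\{G_1,G_2\}$ with $\{G_1,G_2\}\ne\{F_1,F_2\}$, identify the generating systems of $\pi_1(M)$ determined by the two handlebodies bounded by $G_1$ and by $G_2$, and then invoke Proposition~\ref{nielsen} to reduce non-isotopy to showing that the commutator $[x_1,x_2]$ arising from $G_1$ is not conjugate to $[y_1,y_2]^{\pm 1}$ arising from $G_2$. As in the proof of Lemma~\ref{distinction-hs-1}, any free homotopy between two such commutators may be put in general position with respect to the splitting torus $T=\partial M_1=\partial M_2$, and the incompressibility of $T$ together with Lemma~\ref{sublemma} and Lemma~\ref{sublemma2} forces the preimage of $T$ to consist of arcs joining the two boundary circles of $S^1\times[0,1]$. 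Reading off the resulting words on each side of $T$ will produce equations $\varepsilon_i^{-1}x\varepsilon_j=y$ in $\pi_1(M_1)$ and $\pi_1(M_2)$ which Lemma~\ref{lem-word-jan} can analyse.

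The first step is to set up the generating systems. Keeping the notation $\{u_2,u_1\}$, $\{v_2,v_1\}$, $\{u_2,v_1\}$, $\{v_2,u_1\}$ for the four handlebodies making up $F_1$ and $F_2$, I will write $F_3$ (existing only when $M_1\cong D(1/2,-n/(2n+1))\cong E(K_1)$ for a 2-bridge knot $K_1$) as $(U_1\cup W_1')\cup(U_2\cup W_2')$, where $W_1'$ is the regular neighborhood in $M_2$ of an arc spine built from a horizontal loop together with the exceptional fiber $u_2$ of Seifert index $\beta_2/\alpha_2$, and $U_1,U_2$ are the two pieces into which the unknotting tunnel $\tau_1$ of $K_1$ divides $M_1$. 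The generating systems are then $\{u_2,\tau_1\}$ and $\{v_2,\tau_1\}$ (or the analogous pair in which $\tau_1$ is replaced by a different loop obtained by pushing it over). The analogous description applies to $F_4$ with $K_2$ and $\tau_2$. This is the key structural input.

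Next I will carry out the conjugacy analysis for each of the relevant pairs: $\{F_1,F_3\}$, $\{F_1,F_4\}$, $\{F_2,F_3\}$, $\{F_2,F_4\}$, and $\{F_3,F_4\}$. In each case I will form the equation expressing that the commutator of one generating pair is freely homotopic to the $\pm1$ power of the commutator of the other, put the homotopy transverse to $T$, and obtain the four equations $\varepsilon_i^{-1}\,g\,\varepsilon_{i+1}=g'$ in $\pi_1(M_1)$ or $\pi_1(M_2)$ as in (\ref{eqn1})--(\ref{eqn4}). For the pairs in which both $F_k$'s are among $\{F_1,F_2\}$ on a given side, Lemma~\ref{lem-word-jan} gives a complete list of solutions; for the pairs involving $F_3$ or $F_4$ one of the two equations takes the form $\varepsilon^{-1}\tau_i\varepsilon'=g$ with $g$ an exceptional fiber or another tunnel on the same side, and Lemma~\ref{sublemma2} together with the fact that $\tau_i$ is not conjugate into $\pi_1(\partial M_i)$ shows that no $\varepsilon,\varepsilon'\in\pi_1(\partial M_i)$ solve it. For the pair $\{F_3,F_4\}$ both sides involve an unknotting tunnel, and the simultaneous use of Lemma~\ref{sublemma2} on each side gives the contradiction.

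The main obstacle is the pair $\{F_3,F_4\}$ and, more generally, the treatment of the unknotting-tunnel generators, because they lie outside the framework of Lemma~\ref{lem-word-jan}: the word-problem lemma only catalogues when one product of exceptional fibers is conjugate into the peripheral subgroup together with another exceptional fiber. The right way around this is to exploit that in the splitting of $M_i=E(K_i)$ coming from $F_3$ or $F_4$, the element represented by $\tau_i$, regarded in $\pi_1(M_i)$, is not conjugate to any element of $\pi_1(\partial M_i)$, by Lemma~\ref{sublemma2}; consequently the required equation of the form $\varepsilon^{-1}\tau_i\varepsilon'=h$ with $\varepsilon,\varepsilon'\in\pi_1(\partial M_i)$ and $h$ either another peripheral element or an exceptional fiber can never hold. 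Once this observation is in place, each of the five cases is disposed of by the same transverse-homotopy bookkeeping as in Lemma~\ref{distinction-hs-1}, and the lemma follows.
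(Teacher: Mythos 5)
There is a genuine gap, and it sits exactly where you flag the ``key structural input.'' The surfaces $F_3$ and $F_4$ are \emph{not} of the single-saturated-annulus/one-bridge type that $F_1$ and $F_2$ are: they belong to Morimoto's families F(2-1), F(2-2), meeting one Seifert piece in \emph{two} parallel saturated annuli (cutting it into three solid tori $W_3\cup R\cup W_4$) and the other piece, the 2-bridge knot exterior, in a 2-bridge sphere $U_3\cap U_4$. Consequently the handlebodies are $V_1^2=U_3\cup R$ and $V_2^2=W_3\cup U_4\cup W_4$, and the generating systems the paper extracts are $\{\tau_2\tau_1,\,h_1\}$ and $\{u_1,\,\tau_2v_1\tau_2^{-1}\}$, not your $\{u_2,\tau_1\}$ and $\{v_2,\tau_1\}$ (which, besides not matching the topology, put the same generator in both handlebodies). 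This matters because the whole point of the paper's argument is that the commutator $[u_1,\tau_2v_1\tau_2^{-1}]$ is a cyclically reduced word of syllable length $8$ in the amalgamated free product $\pi_1(M_1)*_{\pi_1(T)}\pi_1(M_2)$ (non-peripherality of each syllable being supplied by Lemma \ref{sublemma} for the exceptional fibers and Lemma \ref{sublemma2} for the tunnel), whereas $[u_2,u_1]^{\pm1}$ and $[v_2,v_1]^{\pm1}$ have minimal conjugacy length $4$; the conjugacy criterion in the amalgamated product then finishes the proof with no word equations at all. With your length-$4$ commutators $[u_2,\tau_1]$ the length invariant is silent, and the resulting syllable-matching equations would pit a tunnel element against an exceptional fiber modulo peripheral conjugation --- a problem Lemma \ref{lem-word-jan} does not address and Lemma \ref{sublemma2} (which only says the tunnel is not peripheral) does not settle.

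Your general machinery --- Proposition \ref{nielsen}, transversality of the homotopy annulus to $T$, incompressibility, and non-peripherality of the generators forcing all arcs of $\Psi^{-1}(T)$ to run between the two boundary circles --- is the right toolkit and is in fact the topological shadow of the paper's length argument: with the correct generating systems the two boundary circles would meet $\Psi^{-1}(T)$ in $4$ and $8$ points respectively, and the arc-counting alone yields the contradiction. But as written the plan both starts from an incorrect decomposition of $F_3,F_4$ and, for the pair $\{F_3,F_4\}$ (where both commutators have length $8$ and the count does not immediately conclude), leans on Lemma \ref{sublemma2} for a statement it does not deliver. You would need to (i) replace your description of $F_3,F_4$ by the two-annuli/2-bridge-sphere decomposition and recompute the generating systems, and (ii) supply an actual argument that an exceptional fiber is not a peripheral-conjugate of a tunnel element before the $\{F_3,F_4\}$ case can be declared closed.
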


\begin{proof}
First, suppose that $G_1=F_1$ and $G_2=F_4$.
Then $M\in$ M(1-a), and hence $M$ is a union of $M_1\in D[2]$ and 
$M_2=E(S(2n+1,1))=D(1/2, -n/(2n+1))$.

Let $V_1^i$ and $V_2^i$ be genus-2 handlebodies in $M$ bounded by $G_i$. 
We decompose the handlebodies into several parts as follows (see Figure \ref{fig-hs} (F1) and (F2)).
Put $U_j:=V_j^1\cap M_1$ and $W_j:=V_j^1\cap M_2$,
then $U_1\cup U_2$ gives a decomposition of $M_1$ by a saturated annulus and 
$W_1\cup W_2$ gives the one-bridge presentation of $M_2$. 
Note that either $V_1^2$ or $V_2^2$, say $V_2^2$, 
is separated into three components by $T:=\partial M_1=\partial M_2$.
We put $V_1^2:=U_3\cup R$ and $V_2^2:=W_3\cup U_4\cup W_4$, 
where $W_3\cup R\cup W_3$ gives a decomposition of $M_1$ by two parallel saturated annuli 
and $U_3\cup U_4$ gives the two-bridge presentation of $M_2$.

We describe the generating system of 
the $\pi_1(M)$ arising from each handlebody, $V_j^i$ $(i,j\in\{1,2\})$.
Pick a base point $x_0$ for the fundamental group of $M$ on $T\cap G_1$.

Let $u_i$ and $v_i$ be the generators of $\pi_1(M)$ 
obtained from exceptional fibers of $M_i$ 
as in the proof of Lemma \ref{distinction-hs-1}.
Then the generating systems for $\pi_1(V_1^1,x_0)$ and $\pi_1(V_2^1,x_0)$ are
equal to either (i) $\{u_2, u_1\}$ and $\{v_2, v_1\}$, or (ii) $\{u_2, v_1\}$ and $\{v_2, u_1\}$.

Pick a point $x_1\in W_3\cap U_4$ and $x_2$ on $U_4\cap W_4\cap G_2$.
The generating system of $\pi_1(V_1^2, x_1)$ is $\{\tau_2\tau_1, h_1\}$ 
and the generating system of $\pi_1(V_2^2, x_1)$ is 
$\{u_1, \tau_2v_1'\tau_2^{-1}\}$, 
where $\tau_2$ is an arc on $U_3\cap U_4$ joining $x_1$ to $x_2$,
$\tau_1$ is an arc in $R$ joining $x_2$ to $x_1$, 
$h_1$ is a regular fiber of $M_1$ 
and $v_1'$ is an element of $\pi_1(V_2^2, x_2)$ 
obtained from $v_1$ by taking conjugation by an arc $\tau$ on $T$
joining $x_2$ to $x_1$.
We abuse notation to denote the loops $\tau^{-1}\tau_1$ and $\tau_2\tau$
by the symbols $\tau_1$ and $\tau_2$ again. 
Then the generating systems of $\pi_1(M, x_1)$ arising from $V_1^2$ and $V_2^2$
are $\{\tau_2\tau_1, h_1\}$ and $\{u_1, \tau_2v_1\tau_2^{-1}\}$, respectively.

Suppose that $G_1$ and $G_2$ are isotopic.
Then, by Proposition \ref{nielsen},
$[u_1, \tau_2v_1\tau_2^{-1}]$ is conjugate to 
$[u_2, u_1]^{\pm 1}$ or $[v_2, v_1]^{\pm 1}$.
In order to show that this is impossible,
recall that $\pi_1(M)$ is the free product of $\pi_1(M_1)$ and $\pi_1(M_2)$
with amalgamated subgroup $\pi_1(T)$.
Thus the length of each word of $\pi_1(M)$ with respect to this structure is defined.
By using Lemma \ref{sublemma},
we can see that,
for each of $[u_2, u_1]^{\pm 1}$ and $[v_2, v_1]^{\pm 1}$,
the minimal length of words conjugate to it is 4.
We can also see by using Lemmas \ref{sublemma} and \ref{sublemma2}
that the minimal length of words conjugate to $[u_1, \tau_2v_1\tau_2^{-1}]$ is 8.
Hence, 
$[u_1, \tau_2v_1\tau_2^{-1}]$ is not conjugate to 
$[u_2, u_1]^{\pm 1}$ or $[v_2, v_1]^{\pm 1}$.
Hence, $G_1$ and $G_2$ are not isotopic.

Similarly, it can be proved that 
$F_1$ or $F_2$ cannot be isotopic to $F_3$ or $F_4$.
Moreover, by similar arguments,
one can also prove that 
$F_3$ and $F_4$ are not isotopic.
\end{proof}
\vspace{2mm}

{\sc Proof of Theorem \ref{thm-distinction}.}
Suppose that 
$
(\beta_k/\alpha_k, \beta_k'/\alpha_k')\sim
(\varepsilon_k/\alpha_k, \varepsilon_k'/\alpha_k')
$
for some $k=1,2$,
where $\varepsilon_k, \varepsilon_k'\in\{\pm 1\}$.
Then the two 3-bridge spheres $S_1$ and $S_2$ for $L$
are isotopic 
by an isotopy illustrated in Figure \ref{fig-isotopy}.
%
\begin{figure}
\begin{center}
\includegraphics*[width=10cm]{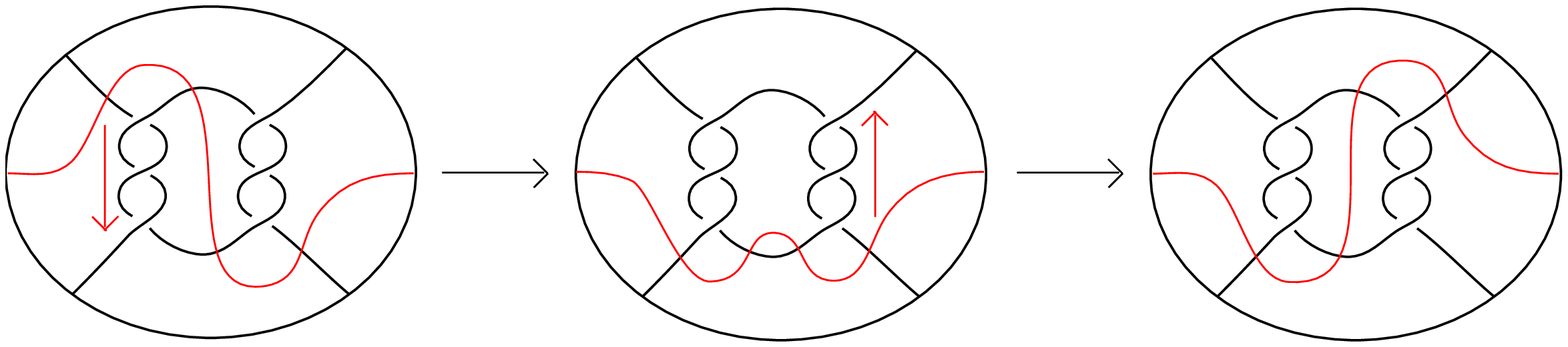}
\end{center}
\caption{}
\label{fig-isotopy}
\end{figure}
%
If
$
(\beta_k/\alpha_k, \beta_k'/\alpha_k')\not\sim
(\varepsilon_k/\alpha_k, \varepsilon_k'/\alpha_k')
$
for both $k=1,2$,
where $\varepsilon_k, \varepsilon_k'\in\{\pm 1\}$,
then $S_i$ and $S_j$ are not isotopic
by Lemmas \ref{distinction-hs-1}, \ref{distinction-hs-2}
and Theorem \ref{prop-hs-3b-0}.
\qed

\begin{remark}
{\rm Lemma \ref{distinction-hs-1} enables us to complete Table 5.2 in \cite{Mor}.
Except for the following cases, 
each number $\mu$ on the table 
gives the exact number of Heegaard splittings up to isotopy.}
\end{remark}

\begin{itemize}
\item 
Let $M_1=E(S(2n+1,1))=D(1/2, -n/(2n+1))$ and $M_2=D(1/2, -1/3)$, 
and suppose that a regular fiber of $M_2$ is identified with a loop $m_1h_1^a$, 
where $m_1$ and $h_1$ are, respectively, a meridian and a regular fiber of $M_1$.
Then $M=M_1\cup_f M_2$ admits exactly two genus-2 Heegaard splittings up to isotopy, 
one of which belongs to F(1) and the other belongs to F(2-2).

\item 
Let $M_1=D(1/2, -1/3)$ and $M_2=E(S(2n+1,1))=D(1/2, -n/(2n+1))$, 
and suppose that a regular fiber of $M_1$ is identified with a loop $m_2h_2^a$, 
where $m_2$ and $h_2$ are, respectively, a meridian and a regular fiber of $M_2$.
Then $M=M_1\cup_f M_2$ admits exactly two genus-2 Heegaard splittings up to isotopy, 
one of which belongs to F(1) and the other belongs to F(2-1).

\item 
When one of $M_1$ and $M_2$ is $D(1/2, -n/(2n+1))$ and the other is homeomorphic to $KI$,
any genus-2 Heegaard splitting which belongs to F(3)
is isotopic to a genus-2 Heegaard splitting in F(1).

\end{itemize}

Moreover, one can obtain the homeomorphism classification of 3-bridge presentations 
and genus-2 Heegaard splittings 
by considering the action of the mapping class group of $M$ on the Heegaard surfaces.


\section{3-bridge spheres for Montesinos links}\label{sec-mont}

In this section, we prove Theorem \ref{thm-mont}.

Let $M=S^2(e_0; \beta_1/\alpha_1, \beta_2/\alpha_2, \beta_3/\alpha_3)$
be a Seifert fibered space over $S^2$ with three exceptional fibers.
To describe the results of \cite{Boi},
we take two exceptional fibers 
$\eta_i$, $\eta_j$ $(1\leq i\neq j\leq 3)$ 
and connect them by an arc projected to a simple arc on the base $S^2$.
A regular neighborhood $V(i,j)$ of the graph obtained 
is a handlebody of genus 2.
The closure $W(i,j)$ of the complement is 
also a handlebody of genus 2 
and we obtain a Heegaard surface 
$F(i,j)=\partial V(i,j)=\partial W(i,j)$ of $M$.
This is called a {\it vertical Heegaard surface}.

\begin{theorem}\label{small-seifert}
(\cite[Theorem 2.5]{Boi})
Let $M=S^2(e_0; \beta_1/\alpha_1, \beta_2/\alpha_2, \beta_3/\alpha_3)$
be a Seifert fibered space over $S^2$ with three exceptional fibers.
\begin{itemize}
\item[{\rm (A)}] 
If $\beta_i\not\equiv \pm 1 \pmod{\alpha_i}$ for $i=1,2,3$, 
then $M$ admits, up to isotopy, exactly three Heegaard surfaces of genus 2,
namely $F(1,2)$, $F(2,3)$, $F(3,1)$.
\item[{\rm (B)}] 
If $\beta_i\not\equiv \pm 1 \pmod{\alpha_i}$ for $i=1,2$
and $\beta_3\equiv \pm 1 \pmod{\alpha_3}$, 
then $M$ admits, up to isotopy, exactly two Heegaard surfaces of genus 2,
namely $F(1,2)$ and $F(2,3)(=F(3,1))$.
\item[{\rm (C)}] 
If $\beta_i\equiv \pm 1 \pmod{\alpha_i}$ for $i=2,3$, 
then $M$ admits, up to isotopy, a single Heegaard surface of genus 2,
$F(1,2)(=F(2,3)=F(3,1))$, 
except when $M$ is one of 
$S(-\frac{1}{6a}; \frac{1}{2}, \frac{(-a)^{-1}}{3},$ $\frac{6^{-1}}{a})$ ($a$ is odd), 
$S(-\frac{1}{6a}; \frac{(-1)^{-1}}{3}, \frac{(-a)^{-1}}{3}, \frac{3^{-1}}{a})$ and
$S(-\frac{1}{4b}; \frac{1}{2}, \frac{(-b)^{-1}}{4}, \frac{4^{-1}}{a})$,
where $a\geq 7$, $b\geq 5$ and $g.c.d.(a,3)=g.c.d.(b,2)=1$.
In each exceptional case
$M$ admits, up to isotopy, 
a unique additional Heegaard surface of genus 2 
obtained by presenting $M$ as the double branched covering of $S^3$ 
branched along a 3-bridge presentation of a link 
in Figure \ref{fig-mont-links-exception}.
(see \cite{Bir2}, \cite{Boi4}).
\end{itemize}
%
\begin{figure}
\begin{center}
\includegraphics*[width=9.5cm]{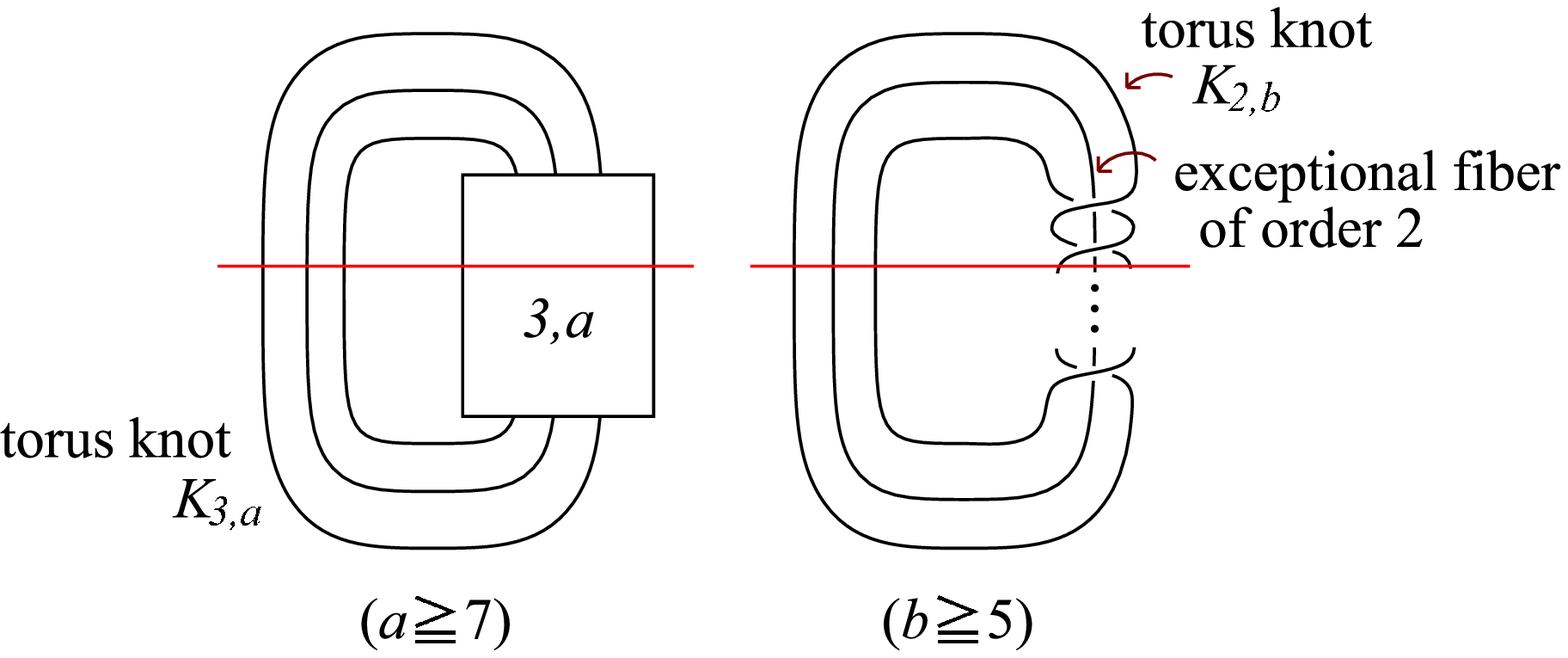}
\end{center}
\caption{}
\label{fig-mont-links-exception}
\end{figure}
%
\end{theorem}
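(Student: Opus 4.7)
The plan is to follow the standard vertical/horizontal dichotomy for Heegaard splittings of Seifert fibered spaces. First, I would invoke a structure theorem (in the spirit of Moriah--Schultens) asserting that any irreducible genus-$2$ Heegaard splitting of a small Seifert fibered space is either isotopic to a vertical splitting (one whose splitting surface is a union of saturated annuli and a horizontal piece in a neighborhood of two exceptional fibers) or to a horizontal splitting (a surface transverse to the fibration). Under this dichotomy, the vertical splittings are exactly the $F(i,j)$ for $1\le i\ne j\le 3$, so the theorem reduces to (a) deciding when two $F(i,j)$'s are isotopic, and (b) detecting the exceptional horizontal splittings.

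For (a), I would use the commutator invariant machinery already developed in Section \ref{classify}. Choose a generating system of $\pi_1(V(i,j))$ associated with the two chosen exceptional fibers $\eta_i,\eta_j$, so that the Heegaard surface $F(i,j)$ determines the Nielsen class of $\{\eta_i,\eta_j\}$ in $\pi_1(M)$. By Proposition \ref{nielsen}, if $F(i,j)$ is isotopic to $F(i',j')$, then $[\eta_i,\eta_j]^{\pm1}$ is conjugate to $[\eta_{i'},\eta_{j'}]$ in $\pi_1(M)$. Using the normal-form arguments in the amalgamated free product structure of $\pi_1(M)$ (parallel to the solutions to the word equations in Lemma \ref{lem-word-jan}), I would show that the only way such a conjugation can exist is if one of the two shared exceptional fibers $\eta_k$ satisfies $\beta_k\equiv\pm1\pmod{\alpha_k}$; the condition $\beta_k\equiv\pm1\pmod{\alpha_k}$ means that $\eta_k$ admits a \emph{horizontal} representative meridian disk in the fibered solid-torus neighborhood, and hence a concrete isotopy exchanges the two vertical surfaces sharing $\eta_k$. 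This gives the counts $3$, $2$, $1$ in cases (A), (B), (C) respectively for the vertical splittings.

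For (b), horizontal genus-$2$ Heegaard splittings of $M$ exist only when $M$ fibers over an orbifold that admits a degree-$2$ or degree-$3$ covering by a once-punctured genus-$1$ surface orbifold, which leads to a Seifert-type arithmetic constraint on the triple $(\alpha_1,\alpha_2,\alpha_3)$. Running through the list of spherical base orbifolds with small exceptional indices (using the Euler characteristic of the base orbifold together with the condition that a degree-$2$ branched cover exists with appropriate monodromy) produces exactly the three families listed in (C). For each such $M$ the horizontal splitting is unique up to isotopy by the results of \cite{Bir2,Boi4}, and one verifies by the same Nielsen/commutator argument that it is not isotopic to the vertical one.

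The main obstacle will be step (a): verifying that the word $[\eta_i,\eta_j]$ is not conjugate to $[\eta_{i'},\eta_{j'}]^{\pm1}$ whenever none of the $\beta_k\equiv\pm1\pmod{\alpha_k}$ conditions hold. Unlike the case treated in Lemma \ref{distinction-hs-1}, where the manifold decomposes along a torus and one can control normal forms on each side, here the fundamental group $\pi_1(M)$ has the presentation
\begin{equation*}
\langle c_1,c_2,c_3,h\mid [c_i,h],\ c_i^{\alpha_i}h^{\beta_i},\ c_1c_2c_3h^{e_0}\rangle,
\end{equation*}
with a single relator tying everything together, so one cannot directly use Britton's lemma. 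Instead one must pass to the quotient $\pi_1(M)/\langle h\rangle$, which is a triangle group $\mathbb{Z}/\alpha_1*\mathbb{Z}/\alpha_2*\mathbb{Z}/\alpha_3$ modulo a single relation, and carry out a careful length analysis in this amalgam, with the $\pm 1$ congruence condition appearing precisely as the obstruction to shortening certain conjugates. Once this word-combinatorial step is in place, the rest of the argument is a bookkeeping exercise combining it with the vertical/horizontal dichotomy and the orbifold covering analysis.
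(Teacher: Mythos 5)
You should first note a mismatch of expectations: the paper does not prove this statement at all. Theorem \ref{small-seifert} is quoted verbatim, with attribution, as \cite[Theorem 2.5]{Boi} (Boileau--Collins--Zieschang), and the figure of exceptional links is likewise imported. So there is no internal proof to compare your proposal against; what can be assessed is whether your sketch would actually establish the cited result.

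As a reconstruction it has the right overall shape (vertical surfaces $F(i,j)$, coincidences governed by $\beta_k\equiv\pm1\pmod{\alpha_k}$ for the shared fiber, plus a short list of exceptional splittings), but two steps are genuine gaps rather than routine bookkeeping. First, the completeness step. You invoke a vertical/horizontal dichotomy ``in the spirit of Moriah--Schultens'' as a black box, but for small Seifert fibered spaces --- which are generally non-Haken --- the assertion that \emph{every} genus-2 Heegaard surface is isotopic to a vertical one or to one of the listed exceptions is precisely the hard content of the theorem; it cannot be quoted and then used to prove itself. Boileau--Collins--Zieschang proceed differently: they classify generating \emph{pairs} of $\pi_1(M)$ up to Nielsen equivalence by a delicate combinatorial analysis in the quotient triangle group, and then pass from Nielsen classes to isotopy classes using the computation of the homeotopy groups of small Seifert manifolds (Boileau--Otal). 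Your commutator-invariant argument runs only in the ``distinguishing'' direction (non-conjugacy of commutators implies non-isotopy, via Proposition \ref{nielsen}); it gives no upper bound on the number of splittings, and conjugacy of commutators is in any case weaker than Nielsen equivalence, which is itself weaker than isotopy. Second, the exceptional list in (C). The three families, with their precise constraints ($a\geq 7$, $b\geq 5$, the g.c.d.\ conditions) and the uniqueness of the additional splitting, come from the Birman--Gonz\'alez-Acu\~na--Montesinos examples of manifolds that are double branched coverings of $S^3$ in two inequivalent ways; your ``degree-2 or degree-3 orbifold covering'' heuristic is not developed far enough to show that it yields exactly this list and nothing more. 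Until both of these are supplied, the proposal distinguishes the $F(i,j)$ from one another in cases (A) and (B) but does not prove the stated classification.
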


\begin{lemma}\label{lemma-exception}
The links in Figure \ref{fig-mont-links-exception}
are not arborescent links.
\end{lemma}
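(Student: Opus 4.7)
The plan is to argue by contradiction, using Theorem \ref{thm-main-1} to reduce to two cases and dispose of each by a topological/geometric argument. Suppose some link $L$ in Figure \ref{fig-mont-links-exception} were arborescent. Since $L$ admits a 3-bridge presentation (visible in the figure), Theorem \ref{thm-main-1} would place $L$ in $\LL_1\cup\LL_2\cup\LL_3$ or in the family of Montesinos links.

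First I would rule out $L\in\LL_1\cup\LL_2\cup\LL_3$ by a torus decomposition argument. As recalled in Section \ref{sec-proof2} (cf.\ \cite[Proposition 7]{Jan2}), for every link in these three families the double branched cover $M_2(L)$ is obtained by gluing two or three Seifert fibered pieces $D(\beta_1/\alpha_1,\beta_1'/\alpha_1')$, $D(\beta_2/\alpha_2,\beta_2'/\alpha_2')$, $M\ddot{o}(1/\alpha_0)$, $A(1/\alpha_0)$, etc.\ along boundary tori whose base-orbifold slopes are interchanged, so the gluing torus is essential in $M_2(L)$. Hence $M_2(L)$ is toroidal. On the other hand, by construction the double branched covering of each link $L$ in Figure \ref{fig-mont-links-exception} is one of the three small Seifert fibered spaces listed in Theorem \ref{small-seifert}(C), each of which has only three exceptional fibers over $S^2$ and is therefore atoroidal. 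This contradicts $L\in\LL_1\cup\LL_2\cup\LL_3$.

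It then remains to show that $L$ is not a Montesinos link. Here I would argue by a direct combinatorial/topological inspection: the 3-bridge presentations drawn in Figure \ref{fig-mont-links-exception} are not obtainable as plumbings of rational tangles in the plane in the sense of \cite{Con} and, in particular, do not have the form $L(-b;\beta_1/\alpha_1,\beta_2/\alpha_2,\beta_3/\alpha_3)$. One can make this rigorous in two ways. The first is to compute a classical invariant (for instance, the symmetry group of $(S^3,L)$, or the Alexander/Jones polynomial of $L$) and compare it with that of the unique Montesinos link $L(-1/(6a);1/2,(-a)^{-1}/3,6^{-1}/a)$ (resp.\ the other two) having $M$ as double branched covering; this Montesinos link is known by the classification of Boileau--Siebenmann to be determined up to equivalence by its Seifert invariants. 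The second, and more intrinsic, way is to observe that the 3-bridge sphere of the Montesinos link $L(-1/(6a);1/2,(-a)^{-1}/3,6^{-1}/a)$ lifts to the vertical Heegaard surface $F(1,2)$ of $M$ (the one common to all three unordered pairs of exceptional fibers in case (C) of Theorem \ref{small-seifert}), whereas by Birman--Hilden \cite{Bir2} and Boileau--Otal \cite{Boi4} the 3-bridge presentations in Figure \ref{fig-mont-links-exception} lift to the \emph{additional} non-vertical Heegaard surface of $M$; by Proposition \ref{b-z} together with Theorem \ref{prop-hs-3b-0}(2), this forces the underlying links to be inequivalent, so $L$ is not Montesinos.

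The main obstacle is this last step: asserting that the links in the figure are inequivalent to the Montesinos link with the same double branched covering is not automatic, because $M$ admits two non-isotopic 3-bridge presentations and one has to check these present genuinely different links rather than two presentations of the same one. The cleanest way to overcome this is to invoke the lifting picture in Theorem \ref{prop-hs-3b-0}(2) and Remark \ref{remark-tau}: under the symmetry $\rho$ of Figure \ref{fig-mont-inv} the vertical Heegaard surface is sent to itself, so both 3-bridge spheres producing $F(1,2)$ belong to the Montesinos link; the remaining Heegaard surface must therefore come from an entirely different link, namely the one drawn in Figure \ref{fig-mont-links-exception}. Combined with the torus argument, this completes the proof.
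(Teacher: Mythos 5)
Your route is genuinely different from the paper's: the paper never invokes Theorem \ref{thm-main-1}, but instead uses the Bonahon--Siebenmann/Futer--Gu\'eritaud classification of \emph{non-hyperbolic} arborescent links, reducing to the three explicit families I, II, III of Figure \ref{fig-alink2} and disposing of them by a finite enumeration (component counts and identifications as torus links, $K_{3,4}$, $K_{3,5}$, etc., none of which can be a link of Figure \ref{fig-mont-links-exception} since those have double branched covers the $\widetilde{SL_2}$-manifolds of Theorem \ref{small-seifert}(C) with $a\geq 7$, $b\geq 5$). Your first step, ruling out $\LL_1\cup\LL_2\cup\LL_3$ because those double branched covers are atoroidal small Seifert fibered spaces, reaches a correct conclusion, though the blanket claim that the JSJ torus of the graph-manifold decomposition is always essential needs a caveat when all the $\alpha_i$ equal $2$: the pieces are then twisted $I$-bundles over the Klein bottle and the union is a torus semi-bundle, which one must separately observe is never a $\widetilde{SL_2}$-manifold.

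The genuine gap is in the Montesinos case, and you have in fact put your finger on it yourself without closing it. The inference you rely on --- non-isotopic Heegaard surfaces force inequivalent quotient links --- is false: non-isotopic Heegaard surfaces routinely have strongly equivalent hyper-elliptic involutions (the vertical surfaces $F(1,2)$, $F(2,3)$, $F(3,1)$ in case (A) of Theorem \ref{small-seifert} are pairwise non-isotopic yet all have involution $\tau_{L'}$ for the same Montesinos link $L'$, which is exactly why $L'$ admits several 3-bridge spheres). What you actually need is that the hyper-elliptic involution of the \emph{exceptional} surface is not conjugate, by any self-homeomorphism of $M$, to the covering involution of the Montesinos link; but that statement is precisely Remark \ref{rmk-exception}, which the paper \emph{deduces from} Lemma \ref{lemma-exception}, so your argument is circular. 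Theorem \ref{prop-hs-3b-0}(2) and Remark \ref{remark-tau} only bound the number of bridge spheres mapping to a fixed Heegaard surface whose involution is already known to equal $\tau_{L'}$; they say nothing about whether the exceptional surface has that property, and the observation that $\rho$ preserves the vertical surfaces does not prevent a further bridge sphere of $L'$ from lifting to the exceptional one. The fallback suggestion of comparing Alexander or Jones polynomials or symmetry groups is not carried out, so it does not constitute a proof. To repair the argument you need an independent identification of the quotient links of Figure \ref{fig-mont-links-exception} showing they are not Montesinos (indeed not arborescent), which is essentially what the paper's enumeration of non-hyperbolic arborescent links accomplishes.
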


\begin{proof}
Suppose that a link, say $L$, in Figure \ref{fig-mont-links-exception}
is an arborescent link.
Since $L$ is not hyperbolic,
it must be equivalent to a link in Figure \ref{fig-alink2}
%
\begin{figure}
\begin{center}
\includegraphics*[width=9.5cm]{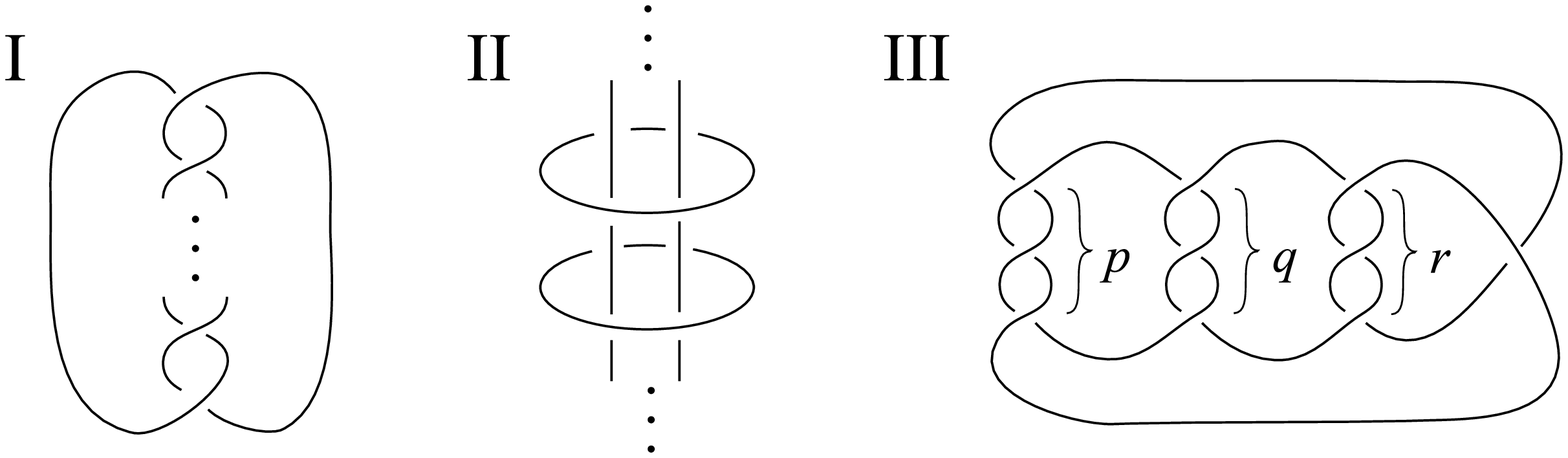}
\end{center}
\caption{}
\label{fig-alink2}
\end{figure}
%
by \cite{Bon, Fut} (cf. \cite[Proposition 3]{Jan2}).
Namely, one of the following holds.
\begin{itemize}
\item[I.] $L$ is the boundary of a single unknotted band,
i.e., a torus knot or link of type $(2,n)$ for some $n\in\Z$. 
\item[II.]  $L$ has two parallel components, 
each of which bounds a twice-punctured disk 
properly embedded in $S^3\setminus L$.
\item[III.]  $L$ or its reflection is the pretzel link $P(p,q,r,-1)$, 
where $p,q,r\geq 2$ and 
$\displaystyle\frac{1}{p}+\displaystyle\frac{1}{q}+\displaystyle\frac{1}{r}\geq 1$.
\end{itemize}

However, 
this cannot occur
since
\begin{itemize}
\item
a link in I of Figure \ref{fig-alink2} is either a torus knot $K_{2,n}$ ($n$ is odd) or
a torus link $K_{2,n}$ ($n$ is even) which consists of two trivial components,\\[-5mm]
\item
a link in II of Figure \ref{fig-alink2} consists of at least three components,\\[-5mm]
\item
$P(2,2,n,-1)$ ($n$ is odd) is a union of a torus knot $K_{2,n}$
and its core of index $n$,\\[-5mm]
\item
$P(2,2,n,-1)$ ($n$ is even) has three components,\\[-5mm]
\item
$P(2,3,3,-1)$ is the torus knot $K_{3,4}$,\\[-5mm]
\item
$P(2,3,4,-1)$ is a union of the torus knot $K_{2,3}$
and its core of index 2,\\[-5mm]
\item
$P(2,3,5,-1)$ is the torus knot $K_{3,5}$,\\[-5mm]
\item
$P(2,3,6,-1)$ consists of two components, the torus knot $K_{2,3}$ and the unknot,\\[-5mm]
\item
$P(2,4,4,-1)$ has three components,\\[-5mm]
\item
$P(3,3,3,-1)$ consists of two trivial components.
\end{itemize}
Hence we obtain the desired result.
\end{proof}
\vspace{2mm}

\begin{remark}\label{rmk-exception}
{\rm
Let $F$ be an exceptional Heegaard surface of a manifold $M$ in Theorem \ref{small-seifert} (C)
and $\tau_F$ the hyper-elliptic involution $\tau$ associated with $F$.
Then $(M,\fix(\tau_F))/\tau_F$ is a links in Figure \ref{fig-mont-links-exception}.
Hence, for any arborescent link $L$,
the covering involution $\tau_L$ of $M_2(L)$ is not equivalent to $\tau_F$.
}
\end{remark}

To prove Theorem \ref{thm-mont} (2), we need the following proposition.

\begin{proposition}\label{sakuma-elliptic}
(\cite[Theorem 4.1]{Sak})
Let $L$ be an elliptic Montesinos link and 
assume that $L$ is not a 2-bridge link. 
Then the symmetry group $\sym (S^3,L)$ 
is as follows according to the type of $L$.
Here, $\psi_i$ $(i=1,2,3,4)$ is a symmetry of $(S^3,L)$
as illustrated in Figure \ref{fig-symmetries}.
%
\begin{figure}
\begin{center}
\includegraphics*[width=12cm]{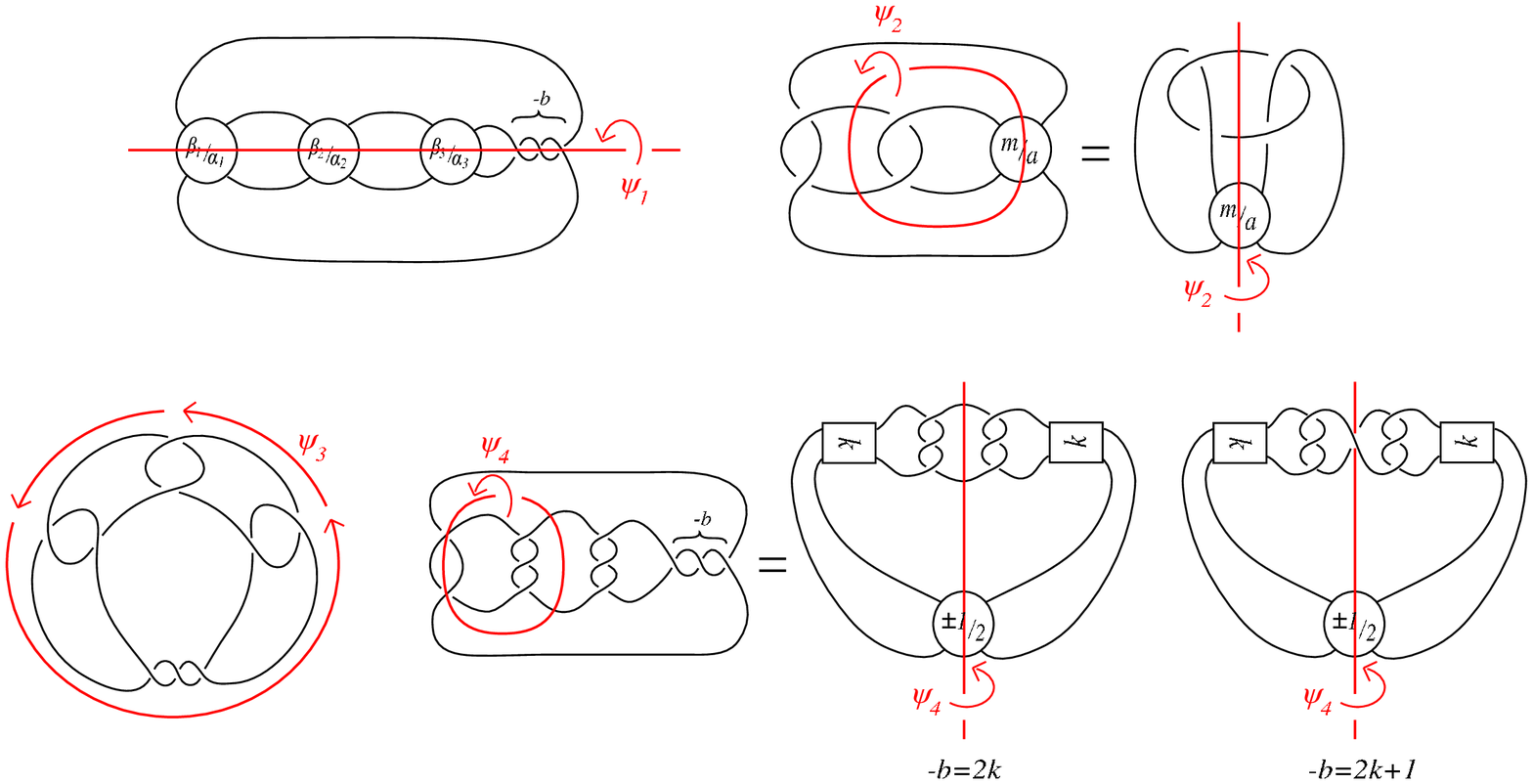}
\end{center}
\caption{}
\label{fig-symmetries}
\end{figure}
%
\begin{itemize}
\item[{\rm (i)}]
Let $L=L(b; 1/2, 1/2, \beta/\alpha)$ and put $m=(-b+1)\alpha+\beta$.

{\rm (i-1)} If $g.c.d.(m,2\alpha)=1$, 
then $\sym (S^3,L)$ is given by
%
\renewcommand{\arraystretch}{1.1} 
\begin{center}
\begin{tabular}{|c||c|c|}
\hline
{} & {$\alpha\geq 3$} & {$\alpha=2$} 
\\ \hline\hline
{$m\neq 1$} & {$\langle\psi_1, \psi_2\rangle\cong\Z_2\oplus\Z_2$} & 
{$\langle\psi_1, \psi_3\rangle\cong\Z_2\oplus D_3$} 
\\ \hline
{$m=1$} & 
{$\langle\psi_1(=\psi_2)\rangle\cong\Z_2$ if $\alpha$ is odd,} 
& {$\langle\psi_1, \psi_3\rangle\cong\Z_2\oplus D_3$} 
\\ 
{} & {$\langle\psi_1, \psi_2\rangle\cong\Z_2\oplus\Z_2$ if $\alpha$ is even.} & {}
\\ \hline
\end{tabular}
\end{center}
%

{\rm (i-2)} If $m$ is even and $g.c.d.(m,\alpha)=1$, 
then $$\sym (S^3,L)=\langle\psi_1, \psi_2\rangle\cong \Z_2\oplus\Z_2.$$

\item[{\rm (ii)}]
Let $L=L(b; 1/2, \beta_2/3, \beta_3/3)$ and
put $m=-6b+3+2(\beta_2+\beta_3)$.
Then 
\begin{eqnarray*}
\sym (S^3,L)=\left\{
\begin{array}{ll}
\langle\psi_1, \psi_4\rangle\cong\Z_2\oplus\Z_2 & if\ g.c.d.(m,12)=1\ and\ m\neq 1,\\
\langle\psi_1\rangle\cong\Z_2 & otherwise.
\end{array}
\right.
\end{eqnarray*}
\item[{\rm (iii)}]
If $L=L(b; 1/2, \beta_2/3, \beta_3/4)$
or $L(b; 1/2, \beta_2/3, \beta_3/5)$, then
$$\sym (S^3,L)=\langle\psi_1\rangle\cong \Z_2.$$
\end{itemize}
\end{proposition}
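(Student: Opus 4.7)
The plan is to reduce the computation of $\sym(S^3, L)$ to a computation on the double branched covering $M = M_2(L)$, which is spherical since $L$ is an elliptic Montesinos link (cf.\ Lemma \ref{lem-sc} and Dunbar's classification cited in its proof). Every element of $\sym(S^3, L)$ lifts to a self-homeomorphism of $M$ commuting with the covering involution $\tau_L$; conversely, isotopy classes of such lifts centralizing $\tau_L$, modulo those isotopic to the identity, recover $\sym(S^3, L)$.

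First, I would use the equivariant sphere theorem together with the orbifold theorem to promote $\tau_L$ to an isometry of a spherical metric on $M$. The isometry group of a spherical manifold is finite and well-understood, and moreover the Seifert fibered structure of $M$ (inherited from the Montesinos decomposition) realizes $M$ as one of
$$
S^2\bigl(e_0; \tfrac{1}{2}, \tfrac{1}{2}, \tfrac{\beta}{\alpha}\bigr), \quad
S^2\bigl(e_0; \tfrac{1}{2}, \tfrac{\beta_2}{3}, \tfrac{\beta_3}{3}\bigr), \quad
S^2\bigl(e_0; \tfrac{1}{2}, \tfrac{\beta_2}{3}, \tfrac{\beta_3}{q}\bigr),
$$
with $q \in \{4,5\}$, corresponding to the cases (i), (ii) and (iii), respectively. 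In each case I would compute the finite group $\isom^+(M)$ and extract the centralizer of $\tau_L$, which after passing to the quotient by inner symmetries controls $\sym(S^3,L)$.

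Second, I would realize the candidate symmetries $\psi_1,\dots,\psi_4$ explicitly on $(S^3, L)$ and lift each of them to an isometry of $M$ permuting prescribed exceptional fibers of the Seifert structure. The numerical invariant $m=(-b+1)\alpha+\beta$ in case (i) governs whether an isometry exchanging the two order-$2$ fibers can be chosen to commute with $\tau_L$ after an inner adjustment: the Seifert gluing data, taken modulo $2\alpha$, obstruct such an exchange unless $\gcd(m, 2\alpha) = 1$, while the subcase $m$ even with $\gcd(m, \alpha) = 1$ permits $\psi_2$ to survive as well. The special conditions $\alpha = 2$ (where an order-$3$ cyclic permutation $\psi_3$ of the three order-$2$ fibers appears) and $m = 1$ with $\alpha$ odd (where $\psi_1$ and $\psi_2$ collapse to a single isotopy class) drop out of the same analysis. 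Case (ii), with $m=-6b+3+2(\beta_2+\beta_3)$ and $\psi_4$ swapping the two order-$3$ fibers, is treated in parallel; case (iii) is forced, since two exceptional fibers of distinct indices cannot be exchanged by any self-homeomorphism.

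The main obstacle will be the careful bookkeeping of when two a priori distinct symmetries become isotopic in $(S^3,L)$. I would handle this by passing to the induced action on the $\pi$-orbifold group $O(L)$ (cf.\ Remark \ref{remark-tau}) and comparing the lifted isometries modulo those whose induced outer automorphism of $O(L)$ is trivial; the tabulated number-theoretic conditions are precisely the obstructions for these identifications, and verifying that the stated list is both complete and non-redundant — matching each line of the two tables to an explicit finite subgroup ($\Z_2$, $\Z_2\oplus\Z_2$, or $\Z_2\oplus D_3$) of the spherical isometry group — is where the bulk of the work lies.
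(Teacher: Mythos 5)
This proposition is not proved in the paper at all: it is quoted verbatim as \cite[Theorem 4.1]{Sak}, so there is no internal argument to compare yours against. Your outline does follow the general strategy one would expect (and which Sakuma essentially uses): geometrize, pass to the spherical structure, and compute a finite isometry group case by case. But as written it is a plan rather than a proof --- none of the case-by-case computations that constitute the actual content of the theorem (the identification of $\isom^+(M)$ for each Seifert invariant, the derivation of the conditions on $m$, the verification that the listed $\psi_i$ generate and that the groups are exactly $\Z_2$, $\Z_2\oplus\Z_2$ or $\Z_2\oplus D_3$) are carried out, and these are where all the difficulty lies.

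There is also one concrete flaw in the proposed bookkeeping step. You suggest distinguishing isotopy classes of symmetries by their induced outer automorphisms of the $\pi$-orbifold group $O(L)$. For an \emph{elliptic} Montesinos link $O(L)$ is finite (this is exactly the content of Lemma \ref{lem-sc}), and the Boileau--Zimmermann isomorphism $\gamma:\sym(S^3,L)\to\out O(L)$ of Proposition \ref{b-z} is only available for sufficiently complicated links, i.e.\ when $O(L)$ is infinite. So in precisely the case at hand you have no guarantee that a symmetry acting trivially on $\out O(L)$ is isotopic to the identity, nor that distinct classes are separated by $O(L)$; the whole reason the present paper must import Sakuma's theorem rather than reuse its own machinery is that Proposition \ref{b-z} fails here. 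The correct substitute is to work with the spherical geometry directly: by the orbifold theorem the pair $(S^3,L)$ carries a spherical orbifold structure, $\sym(S^3,L)$ is realized by the (finite) isometry group of that orbifold, and one then shows that isotopic isometries coincide. Without replacing the $\out O(L)$ step by an argument of this kind, the reduction at the heart of your proposal does not go through.
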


{\sc Proof of Theorem \ref{thm-mont}.}
Let $L$ be a 3-bridge Montesinos link.
Let $M_2(L)$ be the double branched covering of $S^3$ branched over $L$, and
let $p:M_2(L)\rightarrow S^3$ be the covering projection.

(1) Suppose that $L$ is nonelliptic, and  
let $P_i$ ($i=1,2,3,4,5,6$) be a 3-bridge sphere of $L$ in Figure \ref{fig-mont-links}.
Then the pre-images $p^{-1}(P_1)$, $p^{-1}(P_3)$ and $p^{-1}(P_5)$ 
are isotopic to $F(1,2)$, $F(2,3)$ and $F(1,3)$, respectively.
By Theorem \ref{small-seifert} (\cite[Theorem 2.5]{Boi}) and Remark \ref{rmk-exception},
these are the only genus-2 Heegaard surfaces of $M_2(L)$ 
whose hyper-elliptic involutions are strongly equivalent to $\tau_L$.
On the other hand, we see $P_{i+1}=\rho(P_{i})$ for each $i=1,3,5$,
where $\rho$ is the symmetry of $(S^3,L)$ given in Figure \ref{fig-mont-inv}.
Hence, by Theorem \ref{prop-hs-3b-0} and Remark \ref{remark-tau}, 
we see that $L$ admits at most six 3-bridge spheres $P_1$,\dots,$P_6$ up to isotopy.

(2) Suppose that $L$ is elliptic. 
Let $P_1$ be the 3-bridge sphere of $L$ 
as illustrated in Figure \ref{fig-mont-links}, and 
let $P$ be any 3-bridge sphere of $L$.
Set $F_1:=p^{-1}(P_1)$ and $F:=p^{-1}(P)$.
We note that $\tau_{F_1}=\tau_F=\tau_L$.
Since $M_2(L)$ admits a unique genus-2 Heegaard surface up to isotopy
whose hyper-elliptic involution is $\tau_L$, 
by Theorem \ref{small-seifert} (\cite[Theorem 2.5]{Boi}) and Remark \ref{rmk-exception},
$F$ is isotopic to $F_1$.
Thus, there exists a self-homeomorphism $\varphi$ of $M_2(L)$ 
such that $\varphi(F_1)=F$ and $\varphi$ is isotopic to the identity.
By the proof of \cite[Theorem 8]{Bir} 
(cf. the proof of \cite[Proposition 5]{Jan2}),
we may assume that $\varphi$ is $\tau_L$-equivariant, 
where $\tau_L$ is the covering transformation.
So we have a self-homeomorphism $\psi$ of $(S^3, L)$ sending $P_1$ to $P$.
Hence, it suffices to show that generators of the symmetry group $\sym(S^3,L)$
preserve $P_1$ up to isotopy.

We show this only when $L$ satisfies the condition (i-1) of Proposition \ref{sakuma-elliptic}, where $m=1$ and $\alpha\geq 3$.
(The other cases can be treated similarly.)
In this case, we have $\frac{\beta}{\alpha}=\frac{1}{\alpha}+(b-1)$
from $(-b+1)\alpha+\beta=1$, and hence,
$L=L(b;1/2, 1/2, \beta/\alpha)$ is equivalent to $L(0;-1/2, 1/2, 1/\alpha)$.
%
\begin{figure}
\begin{center}
\includegraphics*[width=11.5cm]{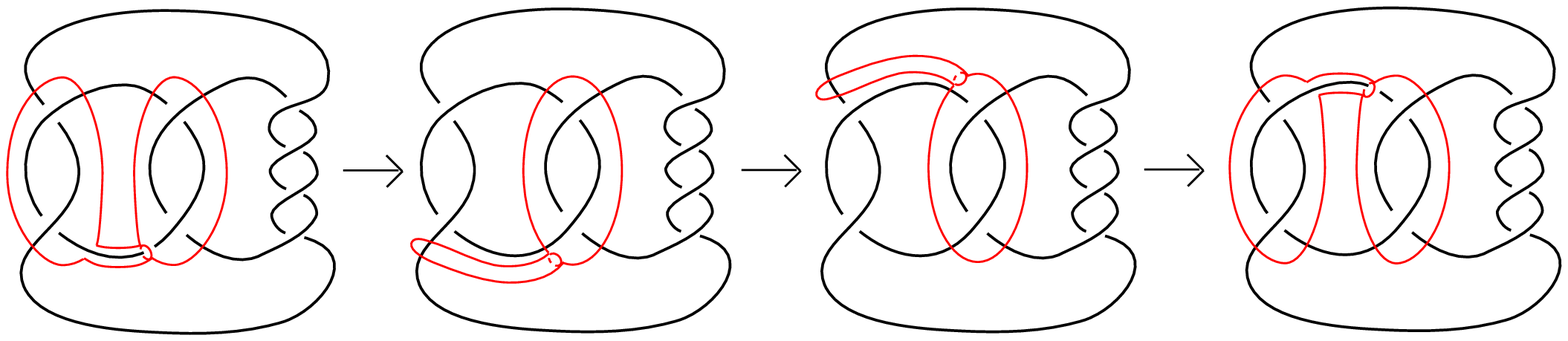}
\end{center}
\caption{}
\label{fig-isotopy-2}
\end{figure}
%
Note that $\psi_2(P_1)=P_1$
and that we can isotope $\psi_1(P_1)$ to $P_1$  
as illustrated in Figure \ref{fig-isotopy-2}.
Hence, $L$ admits a unique 3-bridge sphere up to isotopy.

An isotopy between $P$ and $P_1$ can be constructed similarly
for every case.
Thus every elliptic Montesinos link admits a unique 3-bridge sphere up to isotopy.
\qed

\begin{remark}\label{rmk-nonelliptic}
{\rm 
For nonelliptic Montesinos links, 
we give some conditions for $P_i$ and $P_j$ ($i,j=1,\dots,6$, $i\neq j$) 
to be isotopic
by using isotopies as in the proof of Theorem \ref{thm-mont}.
The following table gives the conditions for each pair,
where (1-$k$) and (2-$k$) $(k=1,2,3)$
denote the following conditions.

{\rm (1-$k$)} $\beta_k\equiv \pm 1 \pmod{\alpha_k}$ $(k=1,2,3)$,

{\rm (2-$k$)} $\alpha_k=2$ $(k=1,2,3)$.

$$
{\vbox
{\tabskip=0pt\offinterlineskip 
 \halign{\strut#& 
  \vrule# & \hfil#\hfil & 
  \vrule# & \hfil#\hfil & 
  \vrule# & \hfil#\hfil & 
  \vrule# & \hfil#\hfil & 
  \vrule# & \hfil#\hfil & 
  \vrule# & \hfil#\hfil & 
  \vrule#
 \cr
 \noalign{\hrule}
  & & { }$P_2${ } & & { }$P_3${ } 
  & & { }$P_4${ } & & { }$P_5${ } & & { }$P_6${ } & & &
 \cr 
 \noalign{\hrule}
  & & { }(2-1) or (2-2) & &  & & { }(1-2) & &  & & { }(1-1) & &  { }$P_1$ & 
 \cr
 \noalign{\hrule}\multispan3 
  & &\strut { }(1-2) & &  & & { }(1-1) & &  & &  { }$P_2$ & 
 \cr \multispan3 
  & & \hrulefill & &\hrulefill & & \hrulefill & & \hrulefill & & \hrulefill & 
 \cr \multispan5 
  & & \strut  { }(2-2) or (2-3) & &  & & { }(1-3) & &  { }$P_3$ & 
 \cr \multispan5 
  & & \hrulefill & & \hrulefill & & \hrulefill & & \hrulefill & 
 \cr \multispan7 
  & & \strut { }(1-3) & &  & &  { }$P_4$ & 
 \cr \multispan7 
  & & \hrulefill & & \hrulefill & &  \hrulefill & 
 \cr \multispan9 
  & & \strut  { }(2-1) or (2-3) & &  { }$P_5$ & 
 \cr \multispan9 
  & & \hrulefill & &  \hrulefill & 
 \cr 
}}}
$$

For example, $P_1$ and $P_2$ are isotopic 
if (2-1) $\alpha_1=2$ or (2-2) $\alpha_2=2$ holds.
Moreover, this implies, for example, $P_1$ and $P_3$ are isotopic
if (i) (2-1) (or (2-2)) and (1-2) holds, 
(ii) (1-2) and (2-2) (or (2-3)) holds, or
(iii) (1-1) and (1-3) holds.
If $\beta_i\equiv \pm 1 \pmod{\alpha_k}$ for all $i=1,2,3$ 
and $b=\Sigma \frac{\beta_i}{\alpha_i}-\Sigma \frac{\pm 1}{\alpha_i}$,
then $P_1,\dots, P_6$ are mutually isotopic.
}
\end{remark}

{\bf Acknowledgment.}
The author would like to express her appreciation to Professor
Makoto Sakuma for his guidance, advices and encouragement.
She would also like to thank 
Kanji Morimoto and Kai Ishihara 
for their helpful comments.



\vspace{1cm}


\label{finishpage}

\end{document}